\newtheorem{thm}{Theorem}[section]
\theoremstyle{plain}
\theoremstyle{definition}
\newtheorem{proposition}[thm]{Proposition}
\newtheorem{lemma}[thm]{Lemma}
\newtheorem{definition}[thm]{Definition}
\newtheorem{claim}[thm]{Claim}
\newtheorem{rem}[thm]{Remark}
\numberwithin{equation}{section}
\newcommand{\uo}{\omega}
\definecolor{lgray}{gray}{0.9}
\def\leq{\leqslant}
\def\geq{\geqslant}
\def\tilde{\widetilde}
\def\hat{\widehat}
\renewcommand*{\backref}[1]{}
\renewcommand*{\backrefalt}[4]{\quad \tiny
  \ifcase #1 (\textbf{NOT CITED.})
  \or    (Cited on page~#2.)
  \else   (Cited on page~#2.)
  \fi}
\title[Full-horseshoes for GSNS]{Full-horseshoes for  Galerkin truncations of   the 2D Navier-Stokes equation with  degenerate stochastic forcing}
\author{Wen Huang}
\address[Wen Huang]{School of Mathematical Sciences\\ University of Science and Technology of China\\ Hefei, Anhui, 230026, China}
\email{wenh@mail.ustc.edu.cn}
\author{Jianhua Zhang}
\address[Jianhua Zhang]{School of Mathematical Sciences\\ University of Science and Technology of China\\ Hefei, Anhui, 230026, China}
\email{leapforg@mail.ustc.edu.cn}
\keywords{Stochastic flow; stationary measure; entropy; full-horseshoes; Lyapunov exponents}
\date{\today}
\subjclass[2020]{37H05, 37A50, 60H10.}
\begin{document}

\begin{abstract}
In this paper, we mainly study the chaotic phenomenon   of  Galerkin truncations of   the 2D Navier-Stokes equation under degenerate stochastic forcing  and large-scale.  We use a kind of chaotic structure named full-horseshoes to describe it. It is proved that  if the  stochastic forcing satisfies some kind of hypoelliptic condition, then  the system  has full-horseshoes.  

 \end{abstract}
\maketitle


\section{Introduction}
\label{23-2-12-1355}
 Turbulent dynamical systems are ubiquitous complex systems in hydrodynamics, such as \cite{majda2006nonlinear,pope2000turbulent,salmon1998lectures,vallis2017atmospheric},  and are characterized by a large dimensional phase space and a large dimension of unstable directions in phase space.
 One of the main goals in the development of the theory of chaotic dynamical systems has been to understand the chaotic phenomenon of turbulent dynamical systems.  In this paper, we focus on  Galerkin truncations of  the 2D stochastic Navier-Stokes equation on tours  (being abbreviated as GSNS). This model was  initiated  by E and Mattingly in \cite{MR1846802}. And they proved the unique ergodicity  of  stationary measure of  GSNS under  a kind of degenerate stochastic forcing.  Later,  Hairer and Mattingly  showed  the unique ergodicity  of  stationary measure  of  2D  stochastic Navier-Stokes equation (being abbreviated as SNS) under more general  degenerate  stochastic forcing in \cite{MR2259251}.
 
  However, there are few mathematically rigorous  results to describe the  chaotic phenomenon  of GSNS or  SNS, even other turbulent dynamical systems. In the recent,  Bedrossian, Blumenthal  and Punshon-Smith made a breakthrough and  firstly proved  a  kind of  turbulent dynamical systems under the H\"omander's  parabolic bracket spanning assumption have positive Lyapunov exponents  with respect to its unique stationary measure  in \cite{MR4372219}.   Particularly, GSNS has positive Lyapunov exponents  with respect to its unique stationary measure \cite{bedrossian2021chaos}.  It is well-known that positive Lyapunov exponent implies
  the sensitive dependence on initial conditions. Then, it is natural to ask  whether there is  a kind of  chaotic  structure   in GSNS, such as Smale horseshoes \cite{MR0182020} or others.

    Full-horseshoe was firstly introduced by the first author and Lu in \cite{HL} to investigate the complex behaviors of infinite-dimensional random dynamical systems.  It imitates the process of coin toss  and   is  a weaker chaotic structure than Smale horseshoe.   The purpose of this paper is to characterize  chaotic phenomenon  of  the GSNS  with full-horseshoes. To the authors' knowledge, this is the  first result about  chaotic structure for the large dimensional turbulent dynamical systems.  Actually, we  obtain  a  sufficient condition to guarantee the existence of full-horseshoes for general systems (see \Cref{23-3-13-2003}). 

Let's begin with the  incompressible  2D  stochastic Navier-Stokes equation in vorticity form on tours $\mathbb{T}^2:=\mathbb{R}^2/2\pi\mathbb{Z}^2$ as 
\begin{align}
\label{22-12-05-03}
\partial_tq=\epsilon\Delta q-\boldsymbol{p}\cdot \nabla q+\partial_t W,
\end{align}
where $\boldsymbol{p}:=\nabla^{\perp}(-\Delta)^{-1}q$  is the divergence free velocity field, 
$\epsilon\in(0,1)$ is the constant viscosity,  and  $\partial_t W$ is the  stochastic  force  described as follows.

Let 
$$\mathbb{Z}^2_+=\{(k_1, k_2)\in \mathbb{Z}^2: k_2>0\}\cup\{(k_1, k_2)\in \mathbb{Z}^2: k_1>0, k_2=0\}\text{ and } \mathbb{Z}^2_0:=\mathbb{Z}^2_+\cup-\mathbb{Z}^2_+,$$ and let $(\Omega,\mathscr{F},\mathbb{P})$  be  an infinite-dimensional  Wiener space, i.e. 
\begin{align}
\label{23-8-28-1017}
(\Omega,\mathscr{F},\mathbb{P})=\big(C_0([0,+\infty), \mathbb{R}),\mathscr{F}_0,\mathbb{P}_0\big)^{\mathbb{Z}^2_0},
\end{align}
where $\big(C_0([0,+\infty), \mathbb{R}),\mathscr{F}_0,\mathbb{P}_0\big)$ is  the standard one-dimensional  Wiener space.  Then 
$$\{W^{(\boldsymbol{k},1)}(\omega):=\omega_{\boldsymbol{k}},\medspace W^{(\boldsymbol{k},2)}:=\omega_{-\boldsymbol{k}}\}_{\boldsymbol{k}\in\mathbb{Z}^2_+}$$  is a    family  of independent   one-dimensional Wiener processes on $(\Omega,\mathscr{F},\mathbb{P})$.  Throughout  this paper, we will consider a white-in-time stochastic forcing $\partial_tW$ being the form   
$$\partial_tW=\sum_{\boldsymbol{k}\in\mathbb{Z}_+^2}e_{(\boldsymbol{k},1)}\cos(\boldsymbol{k}\cdot\boldsymbol{x})\dot{W}^{(\boldsymbol{k},1)}+e_{(\boldsymbol{k},2)}\sin(\boldsymbol{k}\cdot\boldsymbol{x})\dot{W}^{(\boldsymbol{k},2)}, $$
where  $e_{(\boldsymbol{k},1)}$ and $e_{(\boldsymbol{k},2)}$ are constants which satisfy that 
$$e_{(\boldsymbol{k},1)}e_{(\boldsymbol{k},2)}=0  \text{ if and only if }  e_{(\boldsymbol{k},1)}=e_{(\boldsymbol{k},2)}=0.$$

 Let $q(t,\boldsymbol{x})=\sum_{\boldsymbol{k}\in\mathbb{Z}^2_{+}}q_{(\boldsymbol{k},1)}(t)\cos(\boldsymbol{k}\cdot\boldsymbol{x}) +q_{(\boldsymbol{k},2)}(t)\sin(\boldsymbol{k}\cdot\boldsymbol{x}) $. Using Fourier transformation, write \Cref{22-12-05-03} as an infinite system of  stochastic ordinary differential equation on $\mathbb{Z}^2_+$,
 \begin{align}
\label{23-10-8-1938-1}  \dot{q}_{(\boldsymbol{k},1)}=&\frac{1}{2}\sum_{\substack{\boldsymbol{i}+\boldsymbol{j}=\boldsymbol{k}\\ \boldsymbol{i},\boldsymbol{j}\in\mathbb{Z}^2_+}}c_{\boldsymbol{i}\boldsymbol{j}}(q_{(\boldsymbol{i},1)}q_{(\boldsymbol{j},1)}-q_{(\boldsymbol{i},2)}q_{(\boldsymbol{j},2)}) -\sum_{\substack{\boldsymbol{i}-\boldsymbol{j}=\boldsymbol{k}\\ \boldsymbol{i},\boldsymbol{j}\in\mathbb{Z}^2_+}}c_{\boldsymbol{i}\boldsymbol{j}}(q_{(\boldsymbol{i},1)}q_{(\boldsymbol{j},1)}+q_{(\boldsymbol{i},2)}q_{(\boldsymbol{j},2)}) 
  \\\notag\qquad&-\epsilon |\boldsymbol{k}|^2q_{(\boldsymbol{k},1)}+\frac{e_{(\boldsymbol{k},1)}}{2}\dot{W}^{(\boldsymbol{k},1)}
\\\label{23-10-8-1938-2} \dot{q}_{(\boldsymbol{k},2)}=&\frac{1}{2}\sum_{\substack{\boldsymbol{i}+\boldsymbol{j}=\boldsymbol{k}\\ \boldsymbol{i},\boldsymbol{j}\in\mathbb{Z}^2_+}}c_{\boldsymbol{i}\boldsymbol{j}}(q_{(\boldsymbol{i},1)}q_{(\boldsymbol{j},2)}+q_{(\boldsymbol{i},2)}q_{(\boldsymbol{j},1)}) -\sum_{\substack{\boldsymbol{i}-\boldsymbol{j}=\boldsymbol{k}\\ \boldsymbol{i},\boldsymbol{j}\in\mathbb{Z}^2_+}}c_{\boldsymbol{i}\boldsymbol{j}}(q_{(\boldsymbol{i},2)}q_{(\boldsymbol{j},1)}-q_{(\boldsymbol{i},1)}q_{(\boldsymbol{j},2)})
\\\notag\qquad&-\epsilon |\boldsymbol{k}|^2q_{(\boldsymbol{k},2)}-\frac{e_{(\boldsymbol{k},2)}}{2}\dot{W}^{(\boldsymbol{k},2)},
\end{align}
where $c_{\boldsymbol{i},\boldsymbol{j}}:=\langle\boldsymbol{i}^{\perp},\boldsymbol{j}\rangle(\frac{1}{|\boldsymbol{j}|^2}-\frac{1}{|\boldsymbol{i}|^2})\text{ with }\boldsymbol{i}^{\perp}=(i_2,-i_1).$

 In numerical simulations,  there is a well-known method  as Galerkin truncation to approximate PDEs.  Under our setting,   this is  to  fix a  positive integer $N$ and   restrict the indices of  \eqref{23-10-8-1938-1} and \eqref{23-10-8-1938-2}   on  following truncated lattice 
\begin{align}
\label{23-3-6-1456}
\mathbb{Z}^2_{+,N}=\{\boldsymbol{k}\in\mathbb{Z}_+^2: |\boldsymbol{k}|_{\infty}\leq N\}, \quad\text{where } |\boldsymbol{k}|_{\infty}:=\max\{|k_1|, |k_2|\}.
\end{align}
Then  we obtain a  stochastic differential equation  on $\mathbb{Z}^2_{+,N}$ as 
\begin{align}
\label{23-10-8-1953-1}\tag{$1.3_N$}  \dot{q}_{(\boldsymbol{k},1)}=&\frac{1}{2}\sum_{\substack{\boldsymbol{i}+\boldsymbol{j}=\boldsymbol{k}\\ \boldsymbol{i},\boldsymbol{j}\in\mathbb{Z}^2_{+,N}}}c_{\boldsymbol{i}\boldsymbol{j}}(q_{(\boldsymbol{i},1)}q_{(\boldsymbol{j},1)}-q_{(\boldsymbol{i},2)}q_{(\boldsymbol{j},2)}) -\sum_{\substack{\boldsymbol{i}-\boldsymbol{j}=\boldsymbol{k}\\ \boldsymbol{i},\boldsymbol{j}\in\mathbb{Z}^2_{+,N}}}c_{\boldsymbol{i}\boldsymbol{j}}(q_{(\boldsymbol{i},1)}q_{(\boldsymbol{j},1)}+q_{(\boldsymbol{i},2)}q_{(\boldsymbol{j},2)}) 
  \\\notag\qquad&-\epsilon |\boldsymbol{k}|^2q_{(\boldsymbol{k},1)}+\frac{e_{(\boldsymbol{k},1)}}{2}\dot{W}^{(\boldsymbol{k},1)}
\\\label{23-10-8-1953-2}\tag{$1.4_N$}\dot{q}_{(\boldsymbol{k},2)}=&\frac{1}{2}\sum_{\substack{\boldsymbol{i}+\boldsymbol{j}=\boldsymbol{k}\\ \boldsymbol{i},\boldsymbol{j}\in\mathbb{Z}^2_{+,N}}}c_{\boldsymbol{i}\boldsymbol{j}}(q_{(\boldsymbol{i},1)}q_{(\boldsymbol{j},2)}+q_{(\boldsymbol{i},2)}q_{(\boldsymbol{j},1)}) -\sum_{\substack{\boldsymbol{i}-\boldsymbol{j}=\boldsymbol{k}\\ \boldsymbol{i},\boldsymbol{j}\in\mathbb{Z}^2_{+,N}}}c_{\boldsymbol{i}\boldsymbol{j}}(q_{(\boldsymbol{i},2)}q_{(\boldsymbol{j},1)}-q_{(\boldsymbol{i},1)}q_{(\boldsymbol{j},2)})
\\\notag\qquad&-\epsilon |\boldsymbol{k}|^2q_{(\boldsymbol{k},2)}-\frac{e_{(\boldsymbol{k},2)}}{2}\dot{W}^{(\boldsymbol{k},2)}.
\end{align}
 It is clear that \eqref{23-10-8-1953-1}-\eqref{23-10-8-1953-2} defines a  stochastic flow of $C^{\infty}$ diffeomorphisms (for example, see \cite{MR4372219,MR4408018})   as 
\begin{align}
\label{23-2-18-1254}
\Phi: [0,+\infty)\times\Omega\times \mathbb{R}^{d}\to\mathbb{R}^d, \quad (t,\omega,x)\mapsto\Phi^t_{\omega}(x),
\end{align}
where $d:=4N(N+1)$, with following properties
\begin{enumerate}[(i)]
\item for $\mathbb{P}$-a.s. $\omega\in\Omega$, the mapping $t\mapsto \Phi^t_{\omega}$ is continuous from $[0,+\infty)$ to $\text{Diff}^{\infty}(\mathbb{R}^d)$ endowed with relative compact open topology;
\item for $\mathbb{P}$-a.s. $\omega\in\Omega$, one has that  $\Phi^t_{\theta^s\omega}\circ\Phi^s_{\omega}=\Phi_{\omega}^{t+s}$ for any $t,s\geq0$,  and $\Phi^0_{\omega}=\text{Id}_{\mathbb{R}^d}$, where $\theta^t: \Omega\to\Omega$ is  Wiener shift defined as $\theta^t(\omega)=\omega(\cdot+t)-\omega(t)$;
\item  for any $0\leq t_0<t_1<\cdots <t_n$,  the mappings $\omega\mapsto \Phi^{t_n-t_{n-1}}_{\theta^{t_{n-1}}\omega}, \dots, \omega\mapsto \Phi^{t_1-t_0}_{\theta^{t_0}\omega}$ are indenpendent random variables from  $\Omega\to \text{Diff}^{\infty}(\mathbb{R}^d)$. And the distribution of  $\omega\mapsto \Phi^{t}_{\theta^s\omega}$ only depends on $t$.
\end{enumerate}
To ensure that  the stochastic flow of  \eqref{23-10-8-1953-1}-\eqref{23-10-8-1953-2}  has a unique stationary measure for all $\epsilon\in(0,1)$,  we  review a hypoelliptic condition (see \cite{MR4372219} for  details) for  driven model 
\begin{align}
\label{23-10-9-1603}
\mathcal{K}_N:=\{\boldsymbol{k}\in\mathbb{Z}^2_{+,N}: e_{(\boldsymbol{k},1)}e_{(\boldsymbol{k},2)}>0  \},
\end{align}
where $N$ is a positive integer and $\mathbb{Z}^2_{+,N}$ is defined as \eqref{23-3-6-1456}.
\begin{definition}
$\mathcal{K}_N$ is  hypoelliptic if $\mathbb{Z}^2_{0,N}=\bigcup_{n=0}^{+\infty}\mathcal{Z}^n$,  where
\begin{align*}
&\mathbb{Z}^2_{0,N}:=\{\boldsymbol{k}\in\mathbb{Z}^2: 0<|\boldsymbol{k}|_{\infty}\leq N\},
\\& \mathcal{Z}^0=\mathcal{K}_N\cup(-\mathcal{K}_N),
\\&\mathcal{Z}^n=\{\boldsymbol{k}\in\mathbb{Z}^2_{0,N}: \exists \boldsymbol{i}\in\mathcal{Z}^{n-1}\text{ and }\boldsymbol{j}\in\mathcal{Z}^{0} \text{ such that } c_{\boldsymbol{i}, \boldsymbol{j}}\neq0\text{ and }\boldsymbol{k}=\boldsymbol{i}+\boldsymbol{j}\}.
\end{align*}
\end{definition}
Now, we state the main result of this paper as follows.
\begin{thm}
\label{23-1-13-0019}
For sufficiently large positive integer $N$,  if the driven model $\mathcal{K}_N$  is  hypoelliptic,  then  there exists $\epsilon_0>0$ such that for all $\epsilon\in(0,\epsilon_0)$, the stochastic flow   $\Phi$ of  \eqref{23-10-8-1953-1}-\eqref{23-10-8-1953-2} has full-horseshoes. Namely,  there exists     a pair of disjoint closed balls $\{U_1, U_2\}$ of $\mathbb{R}^d$   such that for $\mathbb{P}$-a.s. $\omega\in \Omega$,  there is a  subset $J(\omega)$ of $\mathbb{Z}_+:=\mathbb{N}\cup\{0\}$ with following properties,
\begin{enumerate}[(a)]
\item $\lim_{m\to+\infty}\frac{|J(w)\cap\{0,1.\dots,m-1\}|}{m}>0$;
\item for any $s\in \{1,2\}^{J(\omega)}$, there exists an $x_s\in\mathbb{R}^d$ such that $\Phi^j_{\omega}(x_s)\in U_{s(j)}$ for any $j\in J(\omega)$.
\end{enumerate}
\end{thm}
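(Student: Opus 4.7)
The plan is to deduce \Cref{23-1-13-0019} by verifying the hypotheses of the abstract criterion \Cref{23-3-13-2003} for the specific stochastic flow $\Phi$ generated by \eqref{23-10-8-1953-1}-\eqref{23-10-8-1953-2}. That criterion reduces the existence of full-horseshoes for a random $C^\infty$ diffeomorphism flow to three ingredients: uniqueness of a stationary probability measure $\mu_\epsilon$, strict positivity of the top Lyapunov exponent $\lambda_1(\mu_\epsilon)$, and sufficient integrability of the derivative cocycle $D\Phi$ together with ergodicity of the skew-product. The task is to verify these three ingredients for GSNS under the hypoellipticity assumption on $\mathcal{K}_N$ and to translate the abstract conclusion into the concrete two-ball shadowing statement (a)--(b).

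For the first two ingredients I would combine already-available results. Existence and uniqueness of a stationary measure $\mu_\epsilon$ (with smooth positive density) for every $\epsilon\in(0,1)$ follow from the E--Mattingly/Hairer--Mattingly framework \cite{MR1846802,MR2259251}: the dissipative term $-\epsilon|\boldsymbol{k}|^2$ supplies the enstrophy Lyapunov function, and hypoellipticity of $\mathcal{K}_N$ is exactly the H\"ormander bracket-generating condition needed for strong Feller property and irreducibility of the Markov semigroup. Strict positivity of the top Lyapunov exponent is the chaos input of Bedrossian--Blumenthal--Punshon-Smith: by \cite{bedrossian2021chaos,MR4372219}, under the same hypoellipticity hypothesis there exist $N_0\in\mathbb{N}$ and, for each $N\geq N_0$, an $\epsilon_0(N)>0$ such that $\lambda_1(\mu_\epsilon)>0$ for all $\epsilon\in(0,\epsilon_0(N))$, which fixes exactly the parameter range in \Cref{23-1-13-0019}. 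The moment bounds on $\|D\Phi^t_\omega\|^{\pm 1}$ needed for integrability are standard consequences of the variational equation for \eqref{23-10-8-1953-1}-\eqref{23-10-8-1953-2} together with enstrophy estimates.

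With the three inputs in hand, I would invoke the random Oseledets theorem and Pesin theory for random diffeomorphisms to obtain, at $\mu_\epsilon$-a.e.\ point, unstable manifolds of tempered size transverse to the stable distribution. Choose a regular Pesin block $\Lambda$ on which these geometric quantities are uniformly controlled and pick two disjoint closed balls $U_1,U_2\subset\Lambda$ of small radius arranged so that both intersect a common unstable leaf. The positive-density set $J(\omega)$ in conclusion (a) is then produced by applying Birkhoff's ergodic theorem to the skew-product $(\theta,\Phi)$ on $\Omega\times\mathbb{R}^d$ with respect to $\mathbb{P}\otimes\mu_\epsilon$, recording the visit times of a typical orbit to $\Lambda$. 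Conclusion (b) follows from a graph-transform/shadowing argument along the random unstable foliation: for each symbolic sequence $s\in\{1,2\}^{J(\omega)}$, the point $x_s$ arises as the intersection of a nested family of thin unstable tubes whose diameters contract exponentially at rate $\lambda_1(\mu_\epsilon)$.

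The step I expect to be the principal obstacle is the usual tension between the deterministic choice of $U_1,U_2$ and the tempered-but-not-uniform nature of Pesin constants in the random setting: one must fix $U_1,U_2$ independent of $\omega$, absorb all $\omega$-dependence into $J(\omega)$ and into the shadowing point $x_s$, and still maintain positive density of good returns almost surely. Packaging this cleanly is precisely what the abstract \Cref{23-3-13-2003} is designed to do, so once its quantitative moment, ergodicity, and non-degeneracy hypotheses are verified in the GSNS setting, the proof of \Cref{23-1-13-0019} reduces to translating these dynamical inputs into the form required by that criterion.
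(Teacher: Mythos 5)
Your first two paragraphs do match what the paper actually does in Section 3: uniqueness of the stationary measure, the Gaussian-tail smooth density, ergodicity of the skew product, the moment bounds on $\mathrm{d}\Phi^{\pm 1}$ via the variational equation and Gronwall (these are exactly \textbf{Assumptions 1--3} of \Cref{23-2-16-1653}), and the positive top Lyapunov exponent imported from Bedrossian--Punshon-Smith; this is precisely the verification of (H1)--(H2) in \Cref{23-3-13-2003} and of \Cref{23-2-2-2148}. The problem is your third paragraph, i.e.\ the mechanism you offer for passing from $\lambda_1>0$ to the full-horseshoe. The nested-unstable-tube/graph-transform picture needs much more than two balls meeting a common unstable leaf inside a Pesin block: to realize \emph{every} itinerary $s\in\{1,2\}^{J(\omega)}$ you need, at each return time and for $\mathbb{P}$-a.s.\ $\omega$, that the image of the unstable disk through either ball crosses \emph{both} balls in a Markov fashion, with control of the complementary (stable/center) directions. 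Nothing in the hypotheses gives this: only $\lambda_1>0$ is known, there is no domination, the Pesin constants are merely tempered while $U_1,U_2$ must be deterministic, and the balls produced by the paper are not located geometrically at all. So as written the shadowing step is a genuine gap, and deferring it to \Cref{23-3-13-2003} does not repair it, since that proposition is not proved by any such geometric argument --- it is a restatement of the paper's own chain of results, so invoking it here amounts to assuming the part of the proof that is actually at stake.

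What the paper does instead is entirely entropy-theoretic, and the key word missing from your proposal is \emph{entropy}. First, Biskamp's Pesin entropy formula on $\mathbb{R}^d$ (\Cref{23-2-16-1653}, after passing to the i.i.d.\ RDS of time-one maps) converts $\lambda_1>0$ into $h_{\varrho}(\Phi)\geq\lambda_1>0$. Second, following Huang--Lu, positive relative entropy over the (invertible extension of the) Wiener shift is fed into the relative Pinsker factor and the disintegration $\mu=\int_Y\mu_y\,d\nu$: \Cref{key-lem} produces a partition $\{B_1,\dots,B_{\boldsymbol r}\}$ with $B_1,B_2$ sitting inside $\bar\Omega\times U_1,\bar\Omega\times U_2$, a lower bound on conditional entropies along measurable return times, and then the Karpovsky--Milman--Alon combinatorial lemma (\Cref{22-10-10-01}) extracts, fiberwise and measurably, positive-density sets of times along which the symbols $1,2$ can be hit freely with positive conditional measure --- this is the measurable weak-horseshoe \Cref{weakhoreshoes}, and it is here (not via any crossing geometry) that the independence property (b) is created. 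Third, \Cref{23-10-10-2137} upgrades weak to full horseshoes by encoding the hitting sets into a forward-invariant random compact subset of $\{0,1\}^{\mathbb{Z}_+}$ over the Wiener shift and applying the random Krylov--Bogolyubov theorem (\Cref{22-10-12-01}) plus Birkhoff to obtain a single positive-density $J(\omega)$ for $\mathbb{P}$-a.s.\ $\omega$, with the balls fixed deterministically --- exactly the uniformity issue you flagged but did not resolve. If you want to keep your outline, you must replace the Pesin-block/shadowing paragraph by this entropy-plus-combinatorics route (or supply an independent proof of a random Katok-type horseshoe with the required a.s.\ positive-density free hitting, which is substantially harder than what you sketched).
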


\begin{rem}
In  \cite{MR1846802}, it was proved that $\mathcal{K}_N=\{(0,1), (1,1)\}$ and $\mathcal{K}_N=\{(1,0), (1,1)\}$ are both  hypoelliptic for all $N\in\mathbb{N}$.
\end{rem}

\begin{rem}
In fact,  there exists a positive constant $b$ such that   the density of $J(\omega)$ is greater than $b$ for  $\mathbb{P}$-a.s. $\omega\in\Omega$.
\end{rem}


\begin{rem}
GSNS has     full-horseshoes of  any discrete time form. Namely,  for any $\tau\in(0,+\infty)$,  there exists   a pair of disjoint  closed balls $\{U_1, U_2\}$ of $\mathbb{R}^d$ such that for $\mathbb{P}$-a.s. $\omega\in\Omega$,  there is a  subset $J(\omega)$ of $\mathbb{Z}_+$ with following properties,
\begin{enumerate}[(a)]
\item $\lim_{m\to+\infty}\frac{|J(\omega)\cap\{0,1.\dots,m-1\}|}{m}>0$;
\item for any $s\in \{1,2\}^{J(\omega)}$, there exists an $x_s\in\mathbb{R}^d$ such that $\Phi^{j\tau}_{\omega}(x_s)\in U_{s(j)}$ for any $j\in J(\omega)$.
\end{enumerate}
\end{rem}

 Unfortunately, we can't describe accurately the location of full-horseshoes  at present.   We conjecture that GSNS has full-horseshoes on any two disjoint closed balls. Despite the  ideas of  our proof come from the known literature,  we  overcame the difficulties caused by  varied settings.

The organization of this paper is as follows:  In \Cref{22-12-05-01}, we review  basic knowledge of ergodic theory and entropy theory.  In \Cref{22-11-03-03}, we show that the stochastic flow of  GSNS  has positive entropy with respect to  its unique stationary measure, namely \Cref{23-2-2-2148}. In \Cref{22-03-08-01}, we prove the existence of measurable weak-horseshoes for GSNS, namely \Cref{weakhoreshoes}.  In \Cref{22-11-03-02}, we  extend the measurable weak-horseshoe to  the full-horseshoe. Then  \Cref{23-1-13-0019} is proved.

 \noindent{\bf Acknowledgments.}
 The second author would like to  thank  Alex Blumenthal   and Sam Punshon-Smith for useful discussions.  The authors were supported by NSFC of China (12090012,12090010,12031019,11731003).

 \section{Ergodic theory and entropy theory}
\label{22-12-05-01}


 In this section, we review some basic concepts and classical results  about measure-preserving dynamical systems,  measure of disintegration, relative entropy,  and relative Pinsker $\sigma$-algebra. The reader can see \cite{EW,MR0603625, G,Wal} for details.   

\subsection{Measure-preserving dynamical systems and measure of disintegration}
\label{22-07-06-01}
In this paper, we always work on  the \emph{Polish probability space} $(X,\mathscr{X},\mu)$, which means that  $X$ is a Polish space, $\mathscr{X}$ is the Borel-$\sigma$ algebra of $X$,  and $\mu$ is a probability measure on  $(X, \mathscr{X})$.  \emph{A measure-preserving dynamical system}   $(X,\mathscr{X},\mu,T)$ is said to be a measure-preserving map $T$ on the probability space $(X,\mathscr{X},\mu)$.  Given two measure-preserving dynamical systems  $(X,\mathscr{X},\mu,T)$ and $(Y,\mathscr{Y},\nu,S)$, we say that  $(Y,\mathscr{Y},\nu,S)$ is \emph{a factor} of $(X,\mathscr{X},\mu,T)$ if there exists a
measure-preserving map $\pi:(X,\mathscr{X},\mu)\rightarrow (Y,\mathscr{Y},\nu)$ such that $\pi\circ T=S\circ\pi$. And, $\pi$ is called a   \emph{factor map}.
\begin{definition}
\label{22-07-03-01}
Let $(X,\mathscr{X},\mu,T)$  be a measure-preserving dynamical system. It is called an \emph{ergodic}  measure-preserving dynamical system if $\mu(A)=1$ or $\mu(X\setminus A)=1$ wherever $A$ is a $T$-invariant measurable subset of $X$; it is called an \emph{invertible} measure-preserving dynamical system, if $T^{-1}: X\to X$ exists and is measurable.
\end{definition}

Let  $\pi:(X,\mathscr{X},\mu)\rightarrow (Y,\mathscr{Y},\nu)$ is a measure-preserving map between two  Polish probability spaces. Then,  there is a family of conditional probability measures $\{\mu_y\}_{y\in Y}$ on $(X,\mathscr{X})$ which are characterized by
\begin{itemize}
	\item  $\mu_y(\pi^{-1}(y))=1$ for $\nu$-a.s. $y\in Y$;
	\item for each $f \in L^1(X,\mathscr{X},\mu)$,  one  has that $f \in L^1(X,\mathscr{X},\mu_y)$ for $\nu$-a.s. $y\in Y$, the map $y \mapsto
	\int_X f\,\mathrm{d}\mu_y$ belongs to $L^1(Y,\mathscr{Y},\nu)$ and $\mu=\int_Y\mu_y\mathrm{d}\nu(y)$ in the sense that
	$$\int_Y \left(\int_X f\,\mathrm{d}\mu_y \right)\, \mathrm{d}\nu(y)=\int_X f \,\mathrm{d}\mu.$$
\end{itemize}
Then  $\mu=\int_Y\mu_y\mathrm{d}\nu(y)$  is called \emph{disintegration} of $\mu$ relative to $Y$.  Furthermore,  if $\pi:(X,\mathscr{X},\mu, T)\rightarrow (Y,\mathscr{Y},\nu, S)$ is a factor map between two invertible measure-preserving dynamical  systems on Polish probability spaces,  then $T_*\mu_y=\mu_{Sy}$  for $\nu$-a.s. $y\in Y$, where $T_*\mu_y$ is defined by 
	$$T_*\mu_y(A):=\mu_y(T^{-1}A),$$
	for any $A\in\mathscr{X}$.
	
\begin{lemma}[{\cite[Proposition 6.13]{EW}}]
\label{23-2-21-2054}
Let $\pi:(X,\mathscr{X},\mu,T)\rightarrow (Y,\mathscr{Y},\nu,S)$  a  factor map  between two measure-preserving systems on  Polish probability spaces.  Then 
$$(X\times X,\mathscr{X}\otimes \mathscr{X}, \mu \times_Y\mu, T\times T),$$ 
where the measure $\mu\times_Y\mu:=\int_{Y}(\mu_y\times \mu_y) \, d \nu(y)$, is   a  measure-preserving dynamical system.  
\end{lemma}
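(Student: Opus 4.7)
The plan is to establish two things: that $\mu\times_Y\mu$ is a well-defined probability measure on $(X\times X,\mathscr{X}\otimes\mathscr{X})$, and that it is invariant under $T\times T$. Well-definedness follows from the disintegration theorem recalled just above the lemma: for any $E\in\mathscr{X}\otimes\mathscr{X}$, a standard monotone class argument (starting from measurable rectangles $A\times B$, where $y\mapsto \mu_y(A)\mu_y(B)$ is $\mathscr{Y}$-measurable) shows that $y\mapsto(\mu_y\times\mu_y)(E)$ is $\nu$-integrable, so the formula $\mu\times_Y\mu=\int_Y(\mu_y\times\mu_y)\,d\nu(y)$ defines a probability measure. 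Marginals are both $\mu$, since $\int_Y\mu_y\,d\nu(y)=\mu$.

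The core input for invariance is the identity
\[
T_*\mu_y=\mu_{Sy}\quad\text{for }\nu\text{-a.e. }y\in Y,
\]
which is the statement quoted from Einsiedler--Ward just before the lemma. (Although the cited version assumes invertibility, the identity follows for any factor map from the essential uniqueness of disintegration: both families $\{T_*\mu_y\}_y$ and $\{\mu_{Sy}\}_y$ are disintegrations of $\mu=T_*\mu$ with respect to the factor map $\pi\circ T=S\circ\pi$ and the base measure $\nu$, so they agree $\nu$-a.e.) I would state this explicitly at the start of the argument, since everything else is formal.

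Granting this identity, for any $A,B\in\mathscr{X}$ compute
\[
(T\times T)_*(\mu\times_Y\mu)(A\times B)=\int_Y(T_*\mu_y)(A)\,(T_*\mu_y)(B)\,d\nu(y)=\int_Y\mu_{Sy}(A)\mu_{Sy}(B)\,d\nu(y),
\]
and then use $S$-invariance of $\nu$ to rewrite the right-hand side as $\int_Y\mu_y(A)\mu_y(B)\,d\nu(y)=(\mu\times_Y\mu)(A\times B)$. The collection of $E\in\mathscr{X}\otimes\mathscr{X}$ for which $(T\times T)_*(\mu\times_Y\mu)(E)=(\mu\times_Y\mu)(E)$ is a monotone class containing the measurable rectangles, so by the monotone class theorem the equality extends to all of $\mathscr{X}\otimes\mathscr{X}$. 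This proves $(T\times T)_*(\mu\times_Y\mu)=\mu\times_Y\mu$, i.e.\ the quadruple is a measure-preserving dynamical system.

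The only genuine obstacle is justifying $T_*\mu_y=\mu_{Sy}$ without invertibility, which is really a uniqueness-of-disintegration remark as indicated above; once this is in hand the rest is a routine integration plus monotone class argument, and no delicate property of the underlying systems $T$, $S$ is used beyond measure preservation and the Polish assumption (which ensures existence of regular conditional probabilities in the first place).
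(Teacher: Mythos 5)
Your overall route is the standard one (and the paper itself gives no argument here -- it simply quotes \cite[Proposition 6.13]{EW}): well-definedness of $\mu\times_Y\mu$ by a monotone class argument from measurable rectangles, then invariance from the equivariance identity $T_*\mu_y=\mu_{Sy}$ together with $S$-invariance of $\nu$. That part is fine. The genuine gap is your parenthetical claim that $T_*\mu_y=\mu_{Sy}$ holds for an \emph{arbitrary} factor map ``by essential uniqueness of disintegration''. The uniqueness argument is not even set up correctly: $T_*\mu_y$ and $\mu_{Sy}$ are both carried by the fibre $\pi^{-1}(Sy)$ of $\pi$, not by the fibre $(\pi\circ T)^{-1}(y)=\{x: S\pi(x)=y\}$ of $\pi\circ T$ (these coincide only if $S(Sy)=y$), so neither family is a disintegration with respect to $\pi\circ T$, and when $S$ is not invertible there is no reindexing $z=Sy$ that brings you back to two disintegrations of $\mu$ over $\pi$.

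Worse, the identity -- and in fact the lemma at the stated level of generality -- is false without invertibility of the factor. Take $X=\{0,1\}^{\mathbb{Z}}$ with the left shift $T$ and the Bernoulli$(\tfrac12)$ measure $\mu$, $Y=\{0,1\}^{\mathbb{Z}_+}$ with the left shift $S$ and Bernoulli measure $\nu$, and $\pi(x)=(x_n)_{n\geq 0}$; this is a factor map of (non-invertible on the base) measure-preserving systems on Polish spaces. Here $\mu_y=\delta_y\otimes(\text{Bernoulli on the negative coordinates})$, so $T_*\mu_y$ forces the coordinate at $-1$ to equal $y_0$, while $\mu_{Sy}$ leaves it a fair coin: $T_*\mu_y\neq\mu_{Sy}$. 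Moreover $\mu\times_Y\mu$ gives the event $E=\{(z,z'): z_{-1}=z'_{-1}\}$ measure $\tfrac12$, whereas $(T\times T)^{-1}E=\{(x,x'): x_0=x'_0\}$ has full $\mu\times_Y\mu$-measure, so $\mu\times_Y\mu$ is not $T\times T$-invariant. What the identity really needs is $S$ invertible (mod $\nu$), so that $T^{-1}\pi^{-1}\mathscr{Y}=\pi^{-1}\mathscr{Y}$ and hence $\mathbb{E}[f\circ T\mid \pi^{-1}\mathscr{Y}]=\mathbb{E}[f\mid\pi^{-1}\mathscr{Y}]\circ T$ for all $f\in L^1(\mu)$, which is exactly $T_*\mu_y=\mu_{Sy}$ $\nu$-a.e.; this is the hypothesis under which the paper quotes the identity just before the lemma, it is the setting of the cited Einsiedler--Ward result, and it holds wherever the lemma is actually used (in \Cref{key-lem} and in the proof of \Cref{weakhoreshoes} both systems are invertible, the base having been replaced by its natural extension). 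So your proof is correct once you assume invertibility of the factor, but the attempt to remove that hypothesis cannot be repaired.
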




\subsection{Relative entropy and relative Pinsker $\sigma$-algebra}
In this subsection,  we always assume that $\pi: (X,\mathscr{X},\mu,T)\to (Z,\mathscr{Z},\eta, R)$ is a factor map between two invertible measure-preserving dynamical systems on Polish probability spaces.  And  we review  definitions of  its relative entropy and  relative Pinsker factor. 

For any two finite Borel measurable partitions $\alpha$ and $\beta$ of $X$,  denote $\alpha\vee\beta$ as the family of intersections of a set from $\alpha$ with a set from $\beta$, which is  a finite Borel measurable partition of $X$.  The definition of  multiple is similar. Next, we given a hierarchy of definitions of entropy:
\begin{align*}
& H_{\mu}(\alpha|\beta)=\sum_{A\in\alpha}\sum_{B\in\beta}-\mu(A\cap B)\log\frac{\mu(A\cap B)}{\mu(B)},
\\&h_{\mu}(T,\alpha)=\lim_{n\to+\infty}H_{\mu}(\alpha|\bigvee_{i=1}^{n}T^{-i}\alpha),
\\&h_{\mu}(T,\alpha| Z)=\int_Z h_{\mu_z}(T,\alpha)\mathrm{d}\eta(z),
\end{align*}
 where  $\mu=\int_{Z}\mu_z\mathrm{d}\eta(z)$  is the disintegration of $\mu$ relative to  $Z$.  Then  entropy  of $(X,\mathscr{X},\mu,T)$  relative to $Z$ is defined as 
 \begin{align}
 \label{23-2-12-22-16}
 h_{\mu}(T|Z)=\sup_{\alpha}h_{\mu}(T,\alpha|Z),
 \end{align}
where  $\alpha$ is taken all over finite Borel measurable partition of $X$.

  The \emph{relative Pinsker $\sigma$-algebra $\mathcal{P}_\mu(\pi)$} of the factor map $\pi: (X,\mathscr{X},\mu,T)\to (Z,\mathscr{Z},\eta, R)$ is defined as the smallest $\sigma$-algebra containing
 $$\{ A\in\mathscr{X}:  h_\mu(T,\{A, A^c\}| Z)=0\}.$$
  Note  $\mathcal{P}_\mu(\pi)$ is a $T$-invariant sub-$\sigma$-algebra of $\mathscr{X}$ (for example, see \cite[Section 4.10]{Wal} or \cite{MR1786718}). Hence,  it
 determines a measure-preserving dynamical system $(Y,\mathscr{Y},\nu,S)$ on the Polish probability space and  two factor maps
$$\pi_1: (X, \mathscr{X},\mu, T)\rightarrow (Y,\mathscr{Y},\nu,S),\quad\pi_2:(Y,\mathscr{Y},\nu,S)\rightarrow  (Z,\mathscr{Z},\eta, R),$$
 such that  $\pi_2\circ \pi_1=\pi$  and
$\pi_1^{-1}(\mathscr{Y})=\mathcal{P}_{\mu}(\pi)\pmod{\mu}$.  
The factor map  $\pi_1:(X, \mathscr{X},\mu, T)\rightarrow (Y,\mathscr{Y},\nu,S)$ is called \emph{relative Pinsker factor map} of $\pi$.

Let's end of this section by a  result about  conditional measure-theoretic entropy and relative Pinsker factor. The reader can refer to the proof for \cite[Lemma 4.1]{HL} and \cite[Lemma 3.3]{MR234267}.
\begin{lemma}
\label{22-10-19-06-01}
 Denote $\pi_1:(X,\mathscr{X},\mu,T)\rightarrow (Y,\mathscr{Y},\nu,S)$
as relative Pinsker factor map of $\pi:(X,\mathscr{X},\mu,T)\to (Z,\mathscr{Z},\eta, R)$. Then, for any $l\in\mathbb{N}$ and a finite Borel measurable partition $\alpha$ on $X$ one has that
\begin{enumerate}[(i)]
\item\label{22-12-12-2031-2}  $h_{\mu}(T^l,\alpha| Z)=h_{\mu}(T^l,\alpha|Y)=\int_Yh_{\mu_y}(T^l,\alpha)\mathrm{d}\nu(y)$;
\item\label{key-lem-4}  $\lim_{m\to+\infty}h_{\mu}(T^m,\alpha| Z)=H_{\mu}(\alpha|Y)$, where  $H_{\mu}(\alpha|Y):=\int_YH_{\mu_y}(\alpha)\mathrm{d}\nu(y)$.
\end{enumerate}
\end{lemma}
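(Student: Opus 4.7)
The plan is to handle (i) by a standard addition-formula argument for relative entropy and then reduce (ii) to (i) via disintegration. For (i), the second equality $h_{\mu}(T^l,\alpha|Y)=\int_Y h_{\mu_y}(T^l,\alpha)\,d\nu(y)$ is exactly definition (2.1) applied to the factor map $\pi_1$, which is legitimate since $\pi_2 \circ \pi_1 = \pi$ and relative entropy can be computed with respect to any factor. For the first equality, monotonicity of conditional entropy gives $h_\mu(T^l,\alpha|Z)\geq h_\mu(T^l,\alpha|Y)$ since $\pi^{-1}\mathscr{Z}\subseteq \pi_1^{-1}\mathscr{Y}=\mathcal{P}_\mu(\pi)$. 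For the reverse, take a finite partition $\gamma$ of $X$ measurable with respect to $\mathcal{P}_\mu(\pi)$ and apply the Abramov--Rokhlin addition formula for $T^l$:
\[
h_{\mu}(T^l,\alpha\vee\gamma\mid Z)=h_{\mu}(T^l,\gamma\mid Z)+h_{\mu}\bigl(T^l,\alpha\bigm|\pi^{-1}\mathscr{Z}\vee\textstyle\bigvee_{n\geq 0}T^{-ln}\gamma\bigr).
\]
Since every atom of $\gamma$ lies in $\mathcal{P}_\mu(\pi)$, a standard refinement argument (subadditivity of entropy in the partition variable extends $h_\mu(T^l,\{A,A^c\}|Z)=0$ to finite joins) forces $h_\mu(T^l,\gamma|Z)=0$. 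Combined with $h_\mu(T^l,\alpha\vee\gamma|Z)\geq h_\mu(T^l,\alpha|Z)$ and the trivial reverse inequality $h_\mu(T^l,\alpha|Z\vee\bigvee_n T^{-ln}\gamma)\leq h_\mu(T^l,\alpha|Z)$, this yields equality for each such $\gamma$. Letting $\gamma$ run through a refining sequence whose join generates $\pi_1^{-1}\mathscr{Y}$ and invoking continuity of relative entropy under increasing conditioning $\sigma$-algebras produces the claimed identity.

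For (ii), apply (i) to rewrite $h_\mu(T^m,\alpha|Z)=\int_Y h_{\mu_y}(T^m,\alpha)\,d\nu(y)$. Each integrand is dominated by $\log|\alpha|$, so dominated convergence reduces the assertion to proving $\lim_m h_{\mu_y}(T^m,\alpha)=H_{\mu_y}(\alpha)$ for $\nu$-a.e. $y$. The decisive input is that $Y$ being the relative Pinsker factor forces $(X,T,\mu_y)$ to have completely positive entropy for $\nu$-a.e. $y$ (a relative Rokhlin--Sinai theorem); consequently the tail $\sigma$-algebra $\mathcal{T}_\alpha:=\bigcap_{n\geq 1}\bigvee_{i\geq n}T^{-i}\alpha$ is $\mu_y$-trivial. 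The excerpt's entropy definition and increasing martingale convergence give $h_{\mu_y}(T^m,\alpha)=H_{\mu_y}\bigl(\alpha\bigm|\bigvee_{i\geq 1}T^{-im}\alpha\bigr)$. Using the inclusion $\bigvee_{i\geq 1}T^{-im}\alpha\subseteq\bigvee_{i\geq m}T^{-i}\alpha$ together with decreasing martingale convergence,
\[
H_{\mu_y}(\alpha)\geq H_{\mu_y}\bigl(\alpha\bigm|\textstyle\bigvee_{i\geq 1}T^{-im}\alpha\bigr)\geq H_{\mu_y}\bigl(\alpha\bigm|\textstyle\bigvee_{i\geq m}T^{-i}\alpha\bigr)\xrightarrow[m\to\infty]{} H_{\mu_y}(\alpha\mid\mathcal{T}_\alpha)=H_{\mu_y}(\alpha),
\]
which squeezes the pointwise limit.

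The main obstacle is the relative Rokhlin--Sinai fact used in (ii): that the disintegrated systems $(X,T,\mu_y)$ are Kolmogorov for $\nu$-a.e. $y$. In the non-relative case this is classical; in the relative setting one must use $T$-invariance of $\mathcal{P}_\mu(\pi)$, consistency of the disintegration with the $T$-action ($T_*\mu_y=\mu_{Sy}$), and the fact that a non-trivial partition with $\mu_y$-zero entropy for a positive-$\nu$-measure set of $y$ would, after measurable selection, force a $\mathcal{P}_\mu(\pi)$-measurable refinement strictly exceeding $\pi_1^{-1}\mathscr{Y}$, contradicting the maximality of the Pinsker $\sigma$-algebra. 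The only other non-routine step is the refinement argument in (i) showing that joins of zero-relative-entropy partitions still have zero relative entropy; this follows from the subadditivity $h_\mu(T^l,\gamma_1\vee\gamma_2|Z)\leq h_\mu(T^l,\gamma_1|Z)+h_\mu(T^l,\gamma_2|Z)$ together with approximating a general $\mathcal{P}_\mu(\pi)$-measurable partition by finite joins of $\{A,A^c\}$ with $A\in\mathcal{P}_\mu(\pi)$.
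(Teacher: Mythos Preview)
The paper does not actually prove this lemma; it merely sends the reader to \cite[Lemma 4.1]{HL} and \cite[Lemma 3.3]{MR234267}. Your sketch is in line with the standard arguments found in those references: part (i) via an Abramov--Rokhlin chain rule together with approximation of $\mathcal{P}_\mu(\pi)$ by finite zero-relative-entropy partitions, and part (ii) via the relative Rokhlin--Sinai theorem (the tail $\sigma$-algebra of any finite partition sits inside the relative Pinsker $\sigma$-algebra). So your approach matches what the cited proofs do.

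One phrasing issue is worth flagging. You describe the fiber systems $(X,T,\mu_y)$ as having ``completely positive entropy'' or being ``Kolmogorov'', but $\mu_y$ is not $T$-invariant (only $T_*\mu_y=\mu_{Sy}$), so the notions of CPE or K-system for $\mu_y$ alone are not well-defined. The precise statement you actually need and use is the inclusion
\[
\mathcal{T}_\alpha\;=\;\bigcap_{n\geq 1}\bigvee_{i\geq n}T^{-i}\alpha\;\subseteq\;\pi_1^{-1}\mathscr{Y}\pmod{\mu},
\]
from which the $\mu_y$-triviality of $\mathcal{T}_\alpha$ for $\nu$-a.e.\ $y$ is immediate because $\mu_y$ is supported on a single $\pi_1$-fiber. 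Your final paragraph gestures toward this, but the argument there (a non-trivial zero-entropy partition on a positive-measure set of fibers would enlarge $\mathcal{P}_\mu(\pi)$ beyond itself) is somewhat circular as written. The clean route is the direct inclusion $\mathcal{T}_\alpha\subseteq\mathcal{P}_\mu(\pi)$, which is precisely the relative Rokhlin--Sinai theorem; this is what \cite{MR234267} and \cite{MR1786718} supply, and it is why the paper points to those references rather than reproving the lemma.
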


\section{Positive entropy of GSNS}
\label{22-11-03-03}
In this section,  we show that the stochastic flow of GSNS  has positive  entropy with respect to its unique stationary measure by borrowing the result in \cite{bedrossian2021chaos} and    Pesin's entropy formula on non-compact Riemannina manifolds in \cite{MR3175262}. 
\subsection{Invariant measure, entropy  and Lyapunov exponent for RDS}
In this subsection, we mainly  review some basic definitions in discrete random dynamical systems (being abbreviated as RDS). A canonical  model is generated  by the time-1 map of  solutions of classical SDEs (see \cite[Chapter \uppercase\expandafter{\romannumeral5}]{MR1369243} for details). 
 \begin{definition}
Let   $(\Omega, \mathscr{F},\mathbb{P},\theta)$  be a measure-preserving dynamical system on the Polish probability space.  A RDS $F$ on a Polish space $M$ over $(\Omega, \mathscr{F},\mathbb{P},\theta)$ means that 
 $$F: \mathbb{Z}_+\times\Omega\times M\to M,\quad (n,\omega, x)\mapsto F^n_{\omega}x$$
 is Borel measurable satisfying that  for $\mathbb{P}$-a.s. $\omega\in\Omega$, $F_{\omega}^0=\text{Id}_M$ and $F^{n+m}_{\omega}=F^n_{\theta^m\omega}\circ F^m_{\omega}$ wherever $n,m\in\mathbb{Z}_+$. Furthermore, if  for any $n\in\mathbb{Z}_+$ and $\mathbb{P}$-a.s. $\omega\in\Omega$, $F^n_{\omega}$ is continuous, then $F$ is called  a \emph{continuous RDS}.
\end{definition}
The RDS $F$  always is viewed as a skew product map given by
$$T: \Omega\times M\to \Omega\times M\quad\text{with}\quad  T(\omega, x)=(\theta\omega,  F^1_{\omega}x).$$
A  Borel probability measure $\mu$ on $\Omega\times M$ is called an \emph{invariant measure} of RDS $F$ if a)  $\mu$ is   $T$-invariant;  b)  $(\pi_{\Omega})_*\mu=\mathbb{P}$, where $\pi_{\Omega}$ is the projection from $\Omega\times M$ to $\Omega$.  Additionally, if $\mu$ is $T$-ergodic, then $\mu$ is called an \emph{invariant ergodic measure} of  $F$. 

\begin{definition}
For any an invaraint measure $\mu$ of RDS $F$, the entropy of $(F,\mu)$  is defined as 
$$h_{\mu}(F):=\sup_{\alpha}\lim_{n\to+\infty}\frac{1}{n}\int_{\Omega}H_{\mu_{\omega}}\left(\bigvee_{i=0}^{n-1}(F_{\omega}^{i})^{-1}\alpha\right)\mathrm{d}\mathbb{P}(\omega),$$
where $\alpha$ is taken over the set of all finite Borel measurable partitions of $M$,  and $\mu=\int_{\Omega}\delta_{\omega}\times\mu_{\omega}d\mathbb{P}(\omega)$ is the disintegration of $\mu$ relative to $\Omega$.  
\end{definition}

\begin{rem}
\label{23-2-24-2006}
The definition of  entropy of $(F,\mu)$ coincides with the definition of relative entropy, i.e. $h_{\mu}(F)=h_{\mu}(T|\Omega)$ (for example, see \cite[Theorem 2.3.4]{Bo11}).
\end{rem}
In smooth dynamical systems, Lyapunov exponent is a significant index to  measure  the divergence and convergence the speed of nearby trajectories by constructing  invariant manifolds.  The celebrated multiplicative ergodic theorem  asserts the existence of Lyapunov exponents.  We give it a convenient version in our setting.  The reader can see the details in \cite[Theorem 3.4.1]{A}.

\begin{proposition}
\label{22-12-2-01}
Suppose that  $F$ is  a RDS on $\mathbb{R}^d$ over an ergodic measure-preserving dynamical system $(\Omega,\mathscr{F},\mathbb{P},\theta)$ on the Polish probability space  and $\mu$ is an invariant ergodic measure of $F$.   If  $F_{\omega}^1$ is a $C^1$ diffeomorphsim on $\mathbb{R}^d$ for $\mathbb{P}$-a.s. $\omega\in\Omega$ and the 
  following integral condition hold,
  \begin{align}
  \int_{\Omega\times \mathbb{R}^d}\left(\log^+|\mathrm{d}_x F^1_{\omega}|+\log^+|\mathrm{d}_x(F_{\omega}^1)^{-1}|\right)\mathrm{d}\mu(\omega, x)<+\infty,
  \end{align}
  where $\log^+|a|:=\max\{\log|a|, 0\}$.  Then there is a $\mu$-full measure  subset $\Lambda\subset\Omega\times \mathbb{R}^d$ and  $d$ constants  $$+\infty>\lambda_1\geq\lambda_2\geq\cdots\geq\lambda_d>-\infty$$
such that for each  $(\uo,x)\in\Lambda$, there exists  a deceasing measurable filtration 
$$T_x\mathbb{R}^d=E_1(\uo,x)\supset E_2(\uo,x)\supset\cdots\supset E_d(\uo,x)\supsetneq E_{d+1}(\omega,x):=\{\boldsymbol{0}\},$$
with  following properties
\begin{enumerate}[(a)]
\item $(\mathrm{d}_xF^n_{\omega})E_i(\uo,x)=E_i(\theta^n\omega, F^n_{\omega}x)$  for  $i=1,2,\dots,d$, $n\in\mathbb{Z}_+$ and $(\omega, x)\in\Lambda$;
\item for any $i=1,\dots,d$ and $(\omega,x)\in\Lambda$,    $v\in E_i(\uo,x)\setminus E_{i+1}(\omega,x)$ if and only if 
$$\lim_{n\to+\infty}\frac{\log|(\mathrm{d}_xF^n_{\omega})v|}{n}=\lambda_i;$$
\end{enumerate}
Usually, $\lambda_1,\lambda_2,\dots,\lambda_d$ are called  Lyapunov exponents of $(F,\mu)$.  Particularly, $\lambda_1$ is called top Lyapunov exponent  of $(F,\mu)$.
\end{proposition}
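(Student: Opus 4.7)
The plan is to recognize this as the multiplicative ergodic theorem (Oseledets) applied to the matrix cocycle of derivatives. Passing to the skew product $T(\omega, x) = (\theta\omega, F^1_\omega x)$, which preserves $\mu$ ergodically by hypothesis, I would work with the $GL(d, \mathbb{R})$-valued cocycle
$$A^n(\omega, x) := \mathrm{d}_x F^n_\omega,$$
which satisfies $A^{n+m}(\omega, x) = A^n(T^m(\omega, x)) \cdot A^m(\omega, x)$ by the chain rule together with the cocycle property of $F$. The integrability hypothesis translates exactly to $\log^+\|A^1\|,\ \log^+\|(A^1)^{-1}\| \in L^1(\mu)$, which is the standard integrability input for MET.

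The first task is to extract the Lyapunov spectrum. The sequence $f_n(\omega,x) := \log\|A^n(\omega,x)\|$ is subadditive along the orbits of $T$, so Kingman's subadditive ergodic theorem yields $\tfrac{1}{n} f_n \to \lambda_1$ $\mu$-a.e., with $\lambda_1$ constant by ergodicity; the bound $\|A^n\| \geq \|(A^n)^{-1}\|^{-1}$ together with the second integrability condition forces $\lambda_1 > -\infty$. Repeating the argument for the induced cocycles $\wedge^k A^n$ on the exterior powers $\wedge^k \mathbb{R}^d$ for $k = 1,\dots,d$ produces limits $\Lambda_k$, and the Lyapunov exponents are recovered as $\lambda_k := \Lambda_k - \Lambda_{k-1}$ with $\Lambda_0 := 0$. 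Finiteness from above comes from the integrability of $\log^+\|A^1\|$, and the collapse to $d$ deterministic numbers is again forced by ergodicity.

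The deeper step is the construction of the equivariant filtration $E_1 \supset E_2 \supset \cdots \supset E_d$ with the growth characterization in (b). Here I would follow Raghunathan's approach: study the polar decomposition $A^n = U_n D_n V_n^\ast$ and prove that the flags spanned by the right singular vectors of $A^n$ converge as $n \to \infty$, using the exponential spectral gap $e^{n(\lambda_i - \lambda_{i+1})}$ between consecutive singular values to stabilize the corresponding orthogonal splittings. The limiting flag gives the filtration $E_i(\omega, x)$, and both its equivariance $\mathrm{d}_x F^n_\omega \cdot E_i(\omega,x) = E_i(T^n(\omega,x))$ and the exact-growth characterization follow by matching with the singular-value decomposition along orbits. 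The main obstacle is the quantitative control of singular-vector convergence in the regime where the spectral gap is just barely positive; this requires a careful Borel--Cantelli argument combined with perturbation estimates for invariant subspaces associated with approximately equal singular values. This is precisely the content of Arnold's Theorem 3.4.1, so in practice I would invoke that result once the integrability and ergodicity of the skew product $(T, \mu)$ have been verified, which for the GSNS stochastic flow will follow from \emph{(iii)} in the definition of $\Phi$ together with the ergodicity of $\mu$ to be established from the unique-ergodicity results cited earlier.
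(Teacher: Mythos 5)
Your proposal is correct and follows essentially the same route as the paper: the paper gives no proof of this proposition, presenting it as a restatement of the multiplicative ergodic theorem and pointing to \cite[Theorem 3.4.1]{A}, which is exactly what you do after checking that the derivative cocycle of the skew product satisfies the measurability, cocycle and $\log^+$-integrability hypotheses. Your additional sketch (Kingman on the exterior-power cocycles for the spectrum, Raghunathan-type singular-value analysis for the equivariant filtration) is a correct outline of the standard proof behind that citation, so nothing is missing.
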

\subsection{Stochastic flow}
In this subsection, we mainly bulit  the relationship between  discrete RDS and stochastic flow, and give some corresponding definitions for stochastic flow.
Let $\Phi$ be the stochastic flow on $\mathbb{R}^d$ over $(\Omega,\mathscr{F},\mathbb{P})$ defined as \eqref{23-2-18-1254}. It  naturally induces a family of Markov processes  on $\mathbb{R}^d$ whose transition probabilities $\mathcal{P}_t(x,\cdot)$, where $x\in \mathbb{R}^d$ and $t\geq0$, are defined by 
$$\mathcal{P}_t(x, A)=\mathbb{P}\big(\{\omega\in\Omega: \Phi^t_{\omega}(x)\in A\}\big)\text{ for any Borel subset $A$ of $\mathbb{R}^d$}.$$
 \begin{definition}
 A Borel  probability measure $\varrho$ on $\mathbb{R}^d$ is called a \emph{stationary measure} of $\Phi$,  if for any $t\geq 0$ and  any Borel subset $A$ of $\mathbb{R}^d$, one has that
$$\varrho(A)=\int_{\mathbb{R}^d}\mathcal{P}_t(x, A)\mathrm{d}\varrho(x).$$
Furthermore, $\varrho$  is  called an \emph{ergodic} stationary measure  if  for any  Borel subset $A$ of $\mathbb{R}^d$ satisfying that for all $t>0$, $\mathcal{P}_t(x, A)=1_A(x)$ for $\varrho$-a.s. $x\in\mathbb{R}^d$ is $\varrho$-null measure or full-measure (see the other equivalent definitions in \cite[Theorem 3.2.4]{MR1417491}).   
\end{definition}

For any $\tau\in(0,+\infty)$,  the time-$\tau$ map  of the stochastic flow $\Phi$ defines a  discrete RDS  $\Phi^{(\tau)}$ on $\mathbb{R}^d$ over $(\Omega,\mathscr{F},\mathbb{P},\theta^{\tau})$\footnote{Note that $(\Omega,\mathscr{F},\mathbb{P},\theta^{\tau})$ is an  ergodic measure-preserving dynamical system. The reader can refer to a similar proof in \cite[Appendix A.3]{A}.}, where $\theta^{\tau}$ is the Wiener shift. Particularly,  the RDS $\Phi^{(\tau)}$ is defined as following
\begin{align}
\label{23-10-9-1721}
\Phi^{(\tau)}:\mathbb{Z}_+\times \Omega\times\mathbb{R}^d\to\mathbb{R}^d,\quad (n,\omega,x)\mapsto \Phi^{n\tau}_{\omega}x. 
\end{align}

Next,  we give a lemma (see  \cite[Theorem 2.1 in Section I]{MR884892}) which illustrates  connections between stationary measure  of the stochastic flow generated  and invariant measure of RDS.
\begin{lemma}
\label{23-10-10-1528}
Let $\Phi$ be   the stochastic flow  on $\mathbb{R}^d$ over $(\Omega,\mathscr{F},\mathbb{P})$ defined as \eqref{23-2-18-1254}. If  $\varrho$ is a stationary measure, then 
for  any $\tau\in(0,+\infty)$,   $\mathbb{P}\times\varrho$ is an  invariant  measure of RDS  $\Phi^{(\tau)}$, i.e.  $\mathbb{P}\times\varrho$ is  invariant  with respect to the  skew product map  
\begin{align}
\label{23-2-21-2042}
T^{(\tau)}: \Omega\times \mathbb{R}^d\to \Omega\times \mathbb{R}^d\quad\text{with}\quad  T^{(\tau)}(\omega, x)=(\theta^{\tau}\omega,  \Phi^{\tau}_{\omega}x), 
\end{align}
 and $(\pi_{\Omega})_*\mu=\mathbb{P}$, where $\pi_{\Omega}$ is the projection from $\Omega\times\mathbb{R}^d$ to $\Omega$.
\end{lemma}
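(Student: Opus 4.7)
The plan is to verify the two defining properties of an invariant measure for the RDS $\Phi^{(\tau)}$: that $\mathbb{P}\times\varrho$ is invariant under the skew product $T^{(\tau)}$ of \eqref{23-2-21-2042}, and that its marginal on $\Omega$ is $\mathbb{P}$. The second property is immediate from the definition of product measure, since $(\pi_\Omega)_*(\mathbb{P}\times\varrho) = \mathbb{P}(\cdot)\cdot\varrho(\mathbb{R}^d) = \mathbb{P}$. So the bulk of the work is the invariance.

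To check invariance, by a standard monotone-class argument it suffices to verify that
$$(\mathbb{P}\times\varrho)\bigl((T^{(\tau)})^{-1}(A\times B)\bigr) = \mathbb{P}(A)\,\varrho(B)$$
for every measurable rectangle $A\times B\subset\Omega\times\mathbb{R}^d$. Unwinding the definition of $T^{(\tau)}$, this amounts to evaluating
$$\int_\Omega\int_{\mathbb{R}^d}\mathbf{1}_A(\theta^\tau\omega)\,\mathbf{1}_B(\Phi^\tau_\omega x)\,\mathrm{d}\varrho(x)\,\mathrm{d}\mathbb{P}(\omega).$$
I would first apply Fubini to integrate over $\omega$ with $x$ held fixed. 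The key observation is then an independence statement: the random variable $\omega\mapsto\theta^\tau\omega$ depends only on the Wiener increments on $[\tau,+\infty)$, while $\omega\mapsto\Phi^\tau_\omega$ is adapted to the filtration generated by the Wiener processes on $[0,\tau]$ (this is exactly the content of property (iii) of the stochastic flow, applied with the partition $0=t_0<\tau=t_1$, together with the independent-increment property encoded in the product structure \eqref{23-8-28-1017}). Hence the two factors decouple and the inner integral becomes
$$\mathbb{P}(A)\cdot\int_\Omega \mathbf{1}_B(\Phi^\tau_\omega x)\,\mathrm{d}\mathbb{P}(\omega) \;=\; \mathbb{P}(A)\cdot \mathcal{P}_\tau(x,B)$$
by the very definition of the transition kernel.

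It then remains to integrate the identity above against $\mathrm{d}\varrho(x)$ and invoke the defining equation of a stationary measure, giving
$$\int_{\mathbb{R}^d}\mathbb{P}(A)\,\mathcal{P}_\tau(x,B)\,\mathrm{d}\varrho(x) \;=\; \mathbb{P}(A)\,\varrho(B),$$
which closes the computation. The only subtle step is the independence assertion used to decouple $\theta^\tau\omega$ from $\Phi^\tau_\omega$; everything else is Fubini plus a direct appeal to the stationarity definition. Because the independence is exactly one of the structural axioms already stated for the stochastic flow in property (iii), I do not anticipate a serious obstacle — the proof is essentially bookkeeping, and the argument generalizes the classical correspondence between invariant and stationary measures for Markov chains driven by i.i.d.\ noise.
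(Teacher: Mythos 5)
Your proof is correct, and it is the standard argument. Note, though, that the paper does not prove this lemma at all: it simply cites Kifer [Theorem 2.1, Section I] for the correspondence between stationary measures of the flow and invariant measures of the skew product, so your write-up is a self-contained version of the proof that lies behind that citation rather than a different route. Two small points of precision. First, when you replace $\int_\Omega \mathbf{1}_A(\theta^\tau\omega)\,\mathrm{d}\mathbb{P}(\omega)$ by $\mathbb{P}(A)$ you are using that the Wiener shift preserves $\mathbb{P}$, i.e.\ $(\theta^\tau)_*\mathbb{P}=\mathbb{P}$; this is true (and is used elsewhere in the paper, where $(\Omega,\mathscr{F},\mathbb{P},\theta^\tau)$ is asserted to be measure preserving), but it should be invoked explicitly since the factorization alone does not give it. Second, the decoupling step needs exactly the statement that $\omega\mapsto\Phi^\tau_\omega$ is measurable with respect to the $\sigma$-algebra generated by the Wiener paths on $[0,\tau]$, while $\theta^\tau\omega$ is determined by the increments on $[\tau,+\infty)$; property (iii) as stated in the paper gives independence of flow maps over disjoint time intervals, which is not literally the adaptedness you need, so you should appeal to the fact that $\Phi^\tau_\omega$ arises as a strong solution of the SDE (hence is adapted) together with the independent-increment structure of the Wiener space. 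With these two standard facts made explicit, your Fubini-plus-stationarity computation is complete and correct.
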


 Finally, we give  definition of  entropy  and Lyapunov exponents of stochastic flow with respect to its stationary measure.  The reader can refer to  \cite[Section 3 in Chapter \uppercase\expandafter{\romannumeral5}]{MR1369243} for more details. For sake of convenience,  we only consider the behavior  of  time-1 map of the stochastic flow in this paper.   
 \begin{definition}
Let  $\Phi$ be  the stochastic flow on $\mathbb{R}^d$ over $(\Omega,\mathscr{F},\mathbb{P})$ defined as \eqref{23-2-18-1254}
 and   $\varrho$ be  a stationary measure.  Then  $\mathbb{P}\times\varrho$  is an invariant measure of  RDS $\Phi^{(1)}$  by \Cref{23-10-10-1528}.   The entropy of $(\Phi,\varrho)$  is defined as 
 $$h_{\varrho}(\Phi):=h_{\mathbb{P}\times\varrho}(\Phi^{(1)}).$$
 If $\mathbb{P}\times\varrho$ is an invariant ergodic measure of RDS $\Phi^{(1)}$,  then the Lyapunov exponents of  $(\Phi,\varrho)$ are defined as the Lyapunov exponents of RDS $(\Phi^{(1)},\mathbb{P}\times\varrho)$ .
\end{definition}

\subsection{Positive entropy for GSNS} 
To prove that GSNS has positive entropy,  we need to recall   the Pesin's entropy formula.   It  builds   the connection measure-theoretic entropy  and positive Lyapunov exponents
  for the SRB measure on compact Riemannian manifold, such as  \cite{MR693976,MR968818, MR1369243,MR0466791} for a  variety of settings.   Note that the stationary measure $\varrho$ of  the stochastic flow of GSNS  has full-support (for example, see \cite{MR3862850, MR3382587}). Hence that, we need to consider   Pesin's  entropy formula for RDS on  $\mathbb{R}^d$. 

\begin{lemma}
\label{23-2-16-1653}
Let  $\Phi$ be  the stochastic flow on $\mathbb{R}^d$ over $(\Omega,\mathscr{F},\mathbb{P})$ defined as \eqref{23-2-18-1254} and $\varrho$ be  a smooth  stationary measure $\varrho$ of $\Phi$, i.e. $\varrho\ll  m_{\mathbb{R}^d}$ and $\mathbb{P}\times\varrho$ is an ergodic measure of RDS $\Phi^{(1)}$.  If    following three assumptions hold,
\begin{align}
\tag{\textbf{Assumption 1}}&\int_{\Omega\times\mathbb{R}^d}\log^+\sup_{v\in O(\boldsymbol{0}, 1)} |\mathrm{d}_{x+v}\Phi^n_{\omega}|\mathrm{d}(\mathbb{P}\times\varrho)<+\infty\quad \text{for any } n\in\mathbb{N},
\\\tag{\textbf{Assumption 2}}&\int_{\Omega\times\mathbb{R}^d}\log^+\big(\sup_{v\in O(\boldsymbol{0},1)}|\mathrm{d}^2_{x+v}\Phi^1_{\omega}|\big)\mathrm{d}(\mathbb{P}\times\varrho)<+\infty,
\\\tag{\textbf{Assumption 3}}&\int_{\Omega\times\mathbb{R}^d}\log^+ |\mathrm{d}_{\Phi^1_{\omega}(x)}\big(\Phi_{\omega}^1\big)^{-1}|+\log^+\big(\sup_{v\in O(\boldsymbol{0}, 1)}|\mathrm{d}^2_{\Phi^1_{\omega}(x+v)}\big(\Phi_{\omega}^1\big)^{-1}|\big)\mathrm{d}(\mathbb{P}\times\varrho)<+\infty,
\end{align}
where $O(\boldsymbol{0},1):=\{v\in\mathbb{R}^d: |v|<1\}$, then
$$h_{\varrho}(\Phi)=\sum_{i=1}^d\lambda_i^+,$$
where  $\lambda_1,\cdots,\lambda_d$ are the Lyapunov exponents of $(\Phi,\varrho)$ and $\lambda_i^+=\max\{\lambda_i,0\}$ for $i=1,\dots,d$.
\end{lemma}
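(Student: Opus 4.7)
The plan is to derive Pesin's entropy formula for $(\Phi,\varrho)$ by invoking the non-compact random version from \cite{MR3175262}, after recasting $\Phi$ via its time-$1$ map as a skew product and verifying that the three stated assumptions provide exactly the integrability conditions required there. By \Cref{23-10-10-1528} the measure $\mathbb{P}\times\varrho$ is invariant under the skew product $T^{(1)}(\omega,x)=(\theta\omega,\Phi^1_\omega x)$, and it is ergodic by hypothesis. Since $\varrho\ll m_{\mathbb{R}^d}$, we are in the SRB regime, i.e.\ the random analogue of the Lebesgue-class invariant measure setting. Moreover, \textbf{Assumption 1} for $n=1$, combined with \textbf{Assumption 3}, supplies the integrability condition of the multiplicative ergodic theorem \Cref{22-12-2-01}, so the Lyapunov exponents $\lambda_1\geq\cdots\geq\lambda_d$ are well defined and finite.

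Next, I would follow the standard Pesin scheme in the random setting (cf.\ \cite{MR1369243}): first, use the Oseledets splitting from \Cref{22-12-2-01} to obtain unstable and stable Lyapunov subspaces at $\mathbb{P}\times\varrho$-a.e.\ $(\omega,x)$; second, apply the graph-transform argument with \textbf{Assumption 2} to construct tempered local unstable manifolds $W^u_{\mathrm{loc}}(\omega,x)$ in Pesin charts; third, invoke \textbf{Assumption 3} to construct local stable manifolds for the inverse flow; fourth, establish absolute continuity of the unstable foliation and conclude from $\varrho\ll m_{\mathbb{R}^d}$ that the conditional measures of $\mathbb{P}\times\varrho$ along unstable leaves are absolutely continuous with respect to the induced Riemannian volume, which is the fibrewise SRB property; finally, apply the Ledrappier--Young entropy identity to a measurable partition subordinate to the unstable foliation to express the fibrewise entropy as the sum of positive Lyapunov exponents. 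Combining with \Cref{23-2-24-2006} then yields $h_\varrho(\Phi)=h_{\mathbb{P}\times\varrho}(\Phi^{(1)})=\sum_{i=1}^d\lambda_i^+$.

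The main obstacle is that the classical Pesin formula is tailored to compact Riemannian manifolds, where uniform bounds on $\|\mathrm{d}\Phi\|$, $\|\mathrm{d}^2\Phi\|$, and their inverse counterparts are automatic; on $\mathbb{R}^d$ these uniform bounds fail. The three integrability conditions are precisely what is needed to substitute for compactness: \textbf{Assumption 1} for every $n\in\mathbb{N}$ gives the tempering necessary to uniformly control how Pesin charts and adapted Lyapunov norms deteriorate along orbits; \textbf{Assumption 2} ensures that the nonlinear error in the graph-transform iteration used to construct unstable manifolds is integrable; and \textbf{Assumption 3} furnishes the symmetric control required to iterate the inverse and build stable manifolds. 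These are exactly the hypotheses under which the non-compact random Pesin formula of \cite{MR3175262} is established, so once they are posited the formula follows, and the remaining work---to be carried out in the next subsection---is to verify these three assumptions for GSNS by exploiting the dissipative structure and the polynomial growth of the nonlinearity in \eqref{23-10-8-1953-1}--\eqref{23-10-8-1953-2} together with moment estimates for $\varrho$.
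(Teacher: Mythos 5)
Your overall strategy is the paper's: prove the lemma by invoking Biskamp's non-compact Pesin formula \cite[Theorem 3.8]{MR3175262}, with \textbf{Assumption 1}--\textbf{Assumption 3} serving as its integrability hypotheses. But you skip the one step that is in fact the entire content of the paper's proof. Biskamp's theorem (in the Liu--Qian framework \cite{MR1369243}) is formulated for i.i.d.\ random dynamical systems generated by a probability law on a space of diffeomorphisms, with the integrability conditions taken against that law (times the stationary measure); it does not apply verbatim to the skew product $T^{(1)}(\omega,x)=(\theta^1\omega,\Phi^1_\omega x)$ over the Wiener shift, which is where you propose to use it. The paper therefore sets $\tilde\Omega:=\mathrm{Diff}^\infty(\mathbb{R}^d)$ with $\tilde{\mathbb{P}}$ the law of $\omega\mapsto\Phi^1_\omega$, uses the factor map $\Pi(\omega)=(\Phi^1_\omega,\Phi^1_{\theta^1\omega},\dots)$ onto $(\tilde\Omega^{\mathbb{N}},\tilde{\mathbb{P}}^{\mathbb{N}},\tilde\theta)$ (this is where the independence and stationarity properties (ii)--(iii) of the flow enter), defines the induced i.i.d.\ RDS $\tilde\Phi^{(1)}$, and then checks that $\varrho$ is still an ergodic stationary measure, that the entropy and Lyapunov exponents of $(\tilde\Phi^{(1)},\tilde{\mathbb{P}}^{\mathbb{N}}\times\varrho)$ coincide with those of $(\Phi^{(1)},\mathbb{P}\times\varrho)$, and that the three assumptions transfer to integrals over $\tilde\Omega^{\mathbb{N}}\times\mathbb{R}^d$. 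This reduction is routine but it is exactly what makes the citation legitimate; your write-up, which applies the cited theorem directly to the Wiener-driven skew product after \Cref{23-10-10-1528} and \Cref{23-2-24-2006}, leaves it unaddressed.

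Separately, your middle paragraph (Oseledets splitting, graph transforms, stable/unstable manifolds, absolute continuity, Ledrappier--Young) describes the interior of the proof of the cited theorem rather than anything you need to, or actually do, establish: if it is meant as a substitute for the citation it is nowhere near a complete argument in the non-compact random setting, and if you rely on the citation it is superfluous. Keep the citation route, insert the i.i.d.\ representation and the transfer of entropy, exponents, and integrability, and your proof then matches the paper's.
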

\begin{proof}
In fact, above lemma is just  a   rewrite  of \cite[Theorem 3.8]{MR3175262} in our setting.  For completeness, we give a quick proof.  Let $(\tilde{\Omega},\tilde{\mathscr{F}},\tilde{\mathbb{P}})$ be a Polish probability space, where $\tilde{\Omega}=\text{Diff}^{\infty}(\mathbb{R}^d)$ endowed with relative  compact open topology and $\tilde{\mathbb{P}}$  is the distribution of  random variable
$\omega\mapsto \Phi^1_{\omega}.$  
Due to the properties of stochastic flow, we know that 
 \begin{align}
 \label{22-10-30-02}
 \Pi: (\Omega, \mathscr{F},\mathbb{P},\theta^1)\to (\tilde{\Omega}^{\mathbb{N}},\tilde{\mathscr{F}}^{\mathbb{N}},\tilde{\mathbb{P}}^{\mathbb{N}},\tilde{\theta}),\quad \omega\mapsto (\Phi^1_{\omega}, \Phi^1_{\theta^1\omega},\dots)
 \end{align}
 is a  factor map between two measure-preserving dynamical systems on the Polish probability spaces, where the $\tilde{\theta}$ is left-shit map on $\tilde{\Omega}^{\mathbb{N}}$. 
Now,  we  define  a  i.i.d. RDS  $\tilde{\Phi}^{(1)}$ on  $\mathbb{R}^d$ over $(\tilde{\Omega}^{\mathbb{N}},\mathscr{F}^{\mathbb{N}},\tilde{\mathbb{P}}^{\mathbb{N}},\tilde{\theta})$ as following 
\begin{align*}
\tilde{\Phi}^{(1)}: \mathbb{Z}_+\times\tilde{\Omega}\times\mathbb{R}^d\to\mathbb{R}^d\quad (n,\tilde{\omega}_{\infty},x)\mapsto  
\begin{cases}
\tilde{\Phi}_{\tilde{\omega}_{\infty}}^nx:=\tilde{\omega}_{\infty}(n)\circ\cdots\circ\tilde{\omega}_{\infty}(1)x ,\quad&\text{if } n\in\mathbb{N},
\\\tilde{\Phi}_{\tilde{\omega}_{\infty}}^0x:=x,\quad&\text{if } n=0,
  \end{cases}
\end{align*}
By assumptions of stochastic flow $\Phi$ and \eqref{22-10-30-02}, it is not hard to see   that 
\begin{itemize}
\item $\varrho$ is also the ergodic stationary measure of i.i.d. RDS $\tilde{\Phi}^{(1)}$;
\item  the Lyapunov exponents and entropy of i.i.d. RDS  $(\tilde{\Phi}^{(1)},\tilde{\mathbb{P}}^{\mathbb{N}}\times\varrho)$ are equal to  the Lyapunov exponents and entropy of RDS  $(\Phi^{(1)},\mathbb{P}\times\varrho)$, respectively;
\item following three integral conditions hold:
\begin{align}
\notag&\int_{\tilde{\Omega}^{\mathbb{N}}\times\mathbb{R}^d}\log^+\sup_{v\in O(\boldsymbol{0}, 1)} |\mathrm{d}_{x+v}\tilde{\Phi}_{\tilde{\omega}_{\infty}}^n|\mathrm{d}(\tilde{\mathbb{P}}^{\mathbb{N}}\times\varrho)<+\infty\quad \text{for any } n\in\mathbb{N},
\\\notag&\int_{\tilde{\Omega}^{\mathbb{N}}\times\mathbb{R}^d}\log^+\big(\sup_{v\in O(\boldsymbol{0},1)}|\mathrm{d}^2_{x+v}\tilde{\Phi}_{\tilde{\omega}_{\infty}}^1|\big)\mathrm{d}(\tilde{\mathbb{P}}^{\mathbb{N}}\times\varrho)<+\infty,
\\\notag&\int_{\tilde{\Omega}^{\mathbb{N}}\times\mathbb{R}^d}\log^+ |\mathrm{d}_{\tilde{\Phi}^1_{\tilde{\omega}_{\infty}}(x)}\big(\tilde{\Phi}_{\tilde{\omega}_{\infty}}^1\big)^{-1}|+\log^+\big(\sup_{v\in O(\boldsymbol{0}, 1)}|\mathrm{d}^2_{\tilde{\Phi}^1_{\tilde{\omega}_{\infty}}(x+v)}\big(\tilde{\Phi}_{\tilde{\omega}_{\infty}}^1\big)^{-1}|\big)\mathrm{d}(\tilde{\mathbb{P}}^{\mathbb{N}}\times\varrho)<+\infty.
\end{align}
\end{itemize} Therefore,  by \cite[Theorem 3.8]{MR3175262} one has that $h_{\varrho}(\Phi)=h_{\mathbb{P}\times\varrho}(\Phi^{(1)})=\sum_{i=1}^d\lambda_i^+.$
\end{proof}

\begin{proposition}
\label{23-2-2-2148}
For  any  positive integer  $N\geq 392$, if $\mathcal{K}_N$ is  hypoelliptic, then  there exists $\epsilon_0>0$ such that for all $\epsilon\in(0,\epsilon_0)$  the stochastic flow  $\Phi$ of \eqref{23-10-8-1953-1}-\eqref{23-10-8-1953-2}  has a   unique stationary measure $\varrho$ ,  and 
$$h_{\varrho}(\Phi)=\sum_{i=1}^d\lambda_i^+\geq\lambda_1>0,$$
where $\lambda_1,\cdots,\lambda_d$ are the Lyapunov exponents of $(\Phi,\varrho)$.
 \end{proposition}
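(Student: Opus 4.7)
The plan is to assemble the conclusion from three external inputs plus one verification step, all delivered into the framework of \Cref{23-2-16-1653}. First, I would invoke the existing unique ergodicity results: under the hypoelliptic assumption on $\mathcal{K}_N$, the works of E--Mattingly \cite{MR1846802} (and the more general Hairer--Mattingly theory \cite{MR2259251}) guarantee that for all $\epsilon\in(0,1)$ the stochastic flow $\Phi$ of \eqref{23-10-8-1953-1}--\eqref{23-10-8-1953-2} admits a unique stationary measure $\varrho$, which is in fact smooth (i.e.\ absolutely continuous with respect to Lebesgue measure on $\mathbb{R}^d$, cf.\ \cite{MR3862850, MR3382587}), and that $\mathbb{P}\times\varrho$ is therefore ergodic for the skew product $T^{(1)}$ of \eqref{23-2-21-2042}. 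This handles the uniqueness assertion and puts us in position to apply \Cref{22-12-2-01}, so that the Lyapunov exponents $\lambda_1\geq\cdots\geq\lambda_d$ of $(\Phi,\varrho)$ are well-defined.

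Second, I would invoke the positivity result of Bedrossian--Blumenthal--Punshon-Smith: under the hypoellipticity of $\mathcal{K}_N$, the paper \cite{bedrossian2021chaos} proves that for $N\geq 392$ there exists $\epsilon_0>0$ such that for every $\epsilon\in(0,\epsilon_0)$ the top Lyapunov exponent of GSNS satisfies $\lambda_1>0$. This is the single deep input the proposition relies on, and it is precisely what pins down the choice of $N$ and $\epsilon_0$.

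Third, I would deduce $h_{\varrho}(\Phi)=\sum_{i=1}^d\lambda_i^+$ by invoking \Cref{23-2-16-1653}. This reduces to verifying the three moment conditions (\textbf{Assumption 1})--(\textbf{Assumption 3}) for the stochastic flow of \eqref{23-10-8-1953-1}--\eqref{23-10-8-1953-2}. The vector field driving these SDEs is polynomial of degree two in $q$ with additive noise, so bounds on $\mathrm{d}\Phi^1_\omega$, $\mathrm{d}^2\Phi^1_\omega$, and the analogous quantities for the inverse flow follow from standard Jacobian and Hessian estimates via It\^o's formula (see e.g.\ \cite{MR1369243}) combined with the classical enstrophy Lyapunov bound $\mathbb{E}\exp(\eta\|q\|^2)\leq C<\infty$ under $\varrho$ with $\eta$ small. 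Together these yield super-polynomial moment control of $\sup_{v\in O(\boldsymbol{0},1)}|\mathrm{d}_{x+v}\Phi^1_\omega|$, its second differential, and the corresponding quantities for $(\Phi^1_\omega)^{-1}$; in particular they are integrable against $\mathbb{P}\times\varrho$, which is all the three assumptions require. Combining the three ingredients yields
\begin{equation*}
h_{\varrho}(\Phi)=\sum_{i=1}^d \lambda_i^+ \geq \lambda_1 >0.
\end{equation*}

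The main obstacle is the verification of \textbf{Assumption 1}--\textbf{Assumption 3}, particularly the one involving $(\Phi^1_\omega)^{-1}$, since the inverse flow grows under time reversal of the dissipative equation and naive bounds fail. The remedy is to rewrite $(\mathrm{d}\Phi^1_\omega)^{-1}$ as the fundamental matrix of the variational equation solved backward, estimate it via Duhamel against the quadratic nonlinearity, and close the bounds using exponential moments of the enstrophy under the stationary measure $\varrho$ (which are available from the standard energy-enstrophy balance for GSNS). Once these integrability estimates are written out carefully, the remainder of the proof is a direct citation of \Cref{23-2-16-1653} and \cite{bedrossian2021chaos}.
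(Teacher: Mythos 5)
Your proposal matches the paper's proof in all essentials: unique ergodicity and smoothness (Gaussian-tail density) of $\varrho$ drawn from the cited literature, verification of \textbf{Assumption 1}--\textbf{Assumption 3} by Gronwall-type estimates on the (forward and backward) variational equations closed with moments of $\varrho$ via stationarity, then the Pesin formula of \Cref{23-2-16-1653} combined with the positivity of $\lambda_1$ from \cite{bedrossian2021chaos}. The only cosmetic difference is that the paper takes $\log^+$ first, so it needs only the first moment of $|x|$ under $\varrho$ (plus a measure-preservation trick to handle the supremum over the unit ball), whereas you invoke exponential enstrophy moments; both are available from the Gaussian decay of the stationary density, so this is a variation in bookkeeping rather than in approach.
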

\begin{proof}
Firstly, the existence of unique stationary measure for stochastic flow $\Phi$ was proved in \cite{MR1417491}. Denoting this unique stationary measure as $\varrho$, then 
\begin{lemma}
\label{23-2-12-1854}
The stationary measure  $\varrho$  of $\Phi$  has  following properties 
\begin{enumerate}[(a)]
\item\label{23-10-9-1634-1}  the unique stationary measure $\varrho$ is also an ergodic stationary measure of the stochastic flow $\Phi$;
\item\label{23-10-9-1634-2} denoting  $m_{\mathbb{R}^d}$ as the volume measure on $\mathbb{R}^d$,  $\varrho\ll m_{\mathbb{R}^d}$ and  $\rho:=\frac{d\varrho}{dm_{\mathbb{R}^d}}$ is a smooth positive function satisfying that there exist some constants $C, \eta>0$  such that for every $x\in\mathbb{R}^d$,
$\rho(x)\leq Ce^{-\eta|x|^2}$;
\item\label{23-10-9-1634-3}  for any $\tau\in(0,+\infty)$  $\mathbb{P}\times\varrho$ is an  invariant ergodic measure of RDS  $\Phi^{(\tau)}$, i.e.  $\mathbb{P}\times\varrho$ is  invariant and   ergodic with respect to the  skew product map  
\begin{align}
\label{23-2-21-2042}
T^{(\tau)}: \Omega\times \mathbb{R}^d\to \Omega\times \mathbb{R}^d\quad\text{with}\quad  T^{(\tau)}(\omega, x)=(\theta^{\tau}\omega,  \Phi^{\tau}_{\omega}x), 
\end{align}
 and $(\pi_{\Omega})_*\mu=\mathbb{P}$, where $\pi_{\Omega}$ is the projection from $\Omega\times\mathbb{R}^d$ to $\Omega$.
\end{enumerate}
\end{lemma}
\begin{proof}
\eqref{23-10-9-1634-1} follows from  from \cite[Theorem 3.2.6]{MR1417491}.  According to \cite[Theorem 1.5 and Corollary 1.6 ]{MR4408018} and \cite[Theorem 2.1 in Section I]{MR884892}, \eqref{23-10-9-1634-2} and \eqref{23-10-9-1634-3} hold.
\end{proof}

Next,  we aim to verify the \textbf{Assumption 1-Assumption 3} in \Cref{23-2-16-1653} for stochastic flow $\Phi$.  To simply  estimations,  write \eqref{23-10-8-1953-1}-\eqref{23-10-8-1953-2} as 
\begin{align}
\label{22-11-21-01}
 \dot{x}=B(x, x)-\epsilon Ax+\sum_{i=1}^{d}e_i\dot{W}^i,
 \end{align}
 where $B:\mathbb{R}^d\times\mathbb{R}^d\to\mathbb{R}^d$ is bilinear,  $A$ is a  $d\times d$ matrix,   $e_i$ is the constant vector and $W^i$ is the one-dimensional Wiener process for $i=1,\cdots, d$.

Verification for \textbf{Assumption 1:} For any $i\in\{1,2\dots, d\}$, $t\in (0,+\infty)$ and $x\in\mathbb{R}^d$, the equality
\begin{align*}
\partial_{i}\Phi^t_{\omega}(x)&=\partial_{i}x+\int_0^tB\big(\Phi^{\tau}_{\omega}(x), \partial_{i}\Phi^{\tau}_{\omega}(x)\big)+B\big(\partial_{i}\Phi^{\tau}_{\omega}(x),\Phi^{\tau}_{\omega}(x)\big)\mathrm{d}\tau-\int_0^t\epsilon A\big(\partial_{i}\Phi^{\tau}_{\omega}(x)\big)\mathrm{d}\tau,
\end{align*}
holds for any  $\mathbb{P}$-a.s. $\omega\in \Omega$, which implies that
$$|\partial_{i}\Phi^t_{\omega}(x)|\leq1+\int_0^t\Big(a+b|\Phi^{\tau}_{\omega}(x)|\Big)|\partial_{i}\Phi^{\tau}_{\omega}(x)|\mathrm{d}\tau,$$
where $a>1$  and $b>1$ are  constants which only depend on $A$ and  $B$, respectively. For example, we can take $a=\max\{\|A\|,2\}$  and $b=\max\{\|B\|,2\}$. By using  Gronwall's inequality (for example, see \cite[Lemma 1.1]{oguntuase2001inequality}),   we have
\begin{align}
\label{22-11-02-02}
 |\partial_{i}\Phi^t_{\omega}(x)|\leq \exp\left(\int_0^ta+b|\Phi^{\tau}_{\omega}(x)|\mathrm{d}\tau\right).
 \end{align}
 Let  $\{v_k\}_{k\in\mathbb{N}}$ be a countable dense subset of  $O(\boldsymbol{0},1):=\{v\in\mathbb{R}^d: |v|<1\}$.   Fixing a $t\in(0,+\infty)$,  let $f_m(\omega,x )=\sup_{i=1,\cdots m}|\Phi_{\omega}^t(x+v_i)|$ be a Borel measurable function for any $m\in\mathbb{N}$.  Define  a Borel measurable partition of $\Omega\times\mathbb{R}^d$ as 
\begin{align*}
&O_1:=\big\{(\omega,x)\in \Omega\times\mathbb{R}^d:  |\Phi^t_{\omega}(x+v_i)|=f_m(\omega,x )\big\},
\\&O_k:=\big\{(\omega,x)\in \Omega\times\mathbb{R}^d:  |\Phi^t_{\omega}(x+v_k)|=f_m(\omega,x )\big\}\setminus\bigcup_{l=1}^{k-1}O_l \text{ for } k=2,\dots,m.
\end{align*}
Then, one has that 
\begin{align*}
\int_{\Omega\times\mathbb{R}^d}f_m(\omega, x)\mathrm{d}(\mathbb{P}\times\varrho)&=\sum_{k=1}^m\int_{O_k}|\Phi^{t}_{\omega}(x+v_k)|\mathrm{d}(\mathbb{P}\times\varrho)
\\&=\sum_{k=1}^m\int_{T^{(t)}O_k}|x+v_k|\mathrm{d}(\mathbb{P}\times\varrho)
\\&\leq\sum_{k=1}^m\int_{T^{(t)}O_k}(|x|+1)\mathrm{d}(\mathbb{P}\times\varrho)
\\&\overset{\text{\eqref{23-10-9-1634-2} in \Cref{23-2-12-1854}}}\leq\int_{\Omega\times\mathbb{R}^d}|x|\mathrm{d}(\mathbb{P}\times\varrho)+1<+\infty,
\end{align*}
where $T^{(t)}$  is an Borel  measurable transformation on $\Omega\times\mathbb{R}^d$ defined as \eqref{23-2-21-2042}. It follows that 
\begin{align*}
\int_{\Omega\times\mathbb{R}^d}\sup_{v\in O(\mathbf{0},1)}|\Phi^t_{\omega}(x+v)|\mathrm{d}(\mathbb{P}\times\varrho)&=\int_{\Omega\times\mathbb{R}^d}\lim_{m\to+\infty}f_m(\omega,x)\mathrm{d}(\mathbb{P}\times\varrho)
\\&=\lim_{m\to+\infty}\int_{\Omega\times\mathbb{R}^d}f_m(\omega,x)\mathrm{d}(\mathbb{P}\times\varrho)
\\&\leq\int_{\Omega\times\mathbb{R}^d}|x|\mathrm{d}(\mathbb{P}\times\varrho)+1.
\end{align*}
Therefore,  for any $n\in\mathbb{N}$
\begin{align}
\notag\int_{\Omega\times\mathbb{R}^d}\log^+\sup_{v\in O(\boldsymbol{0},1)} |\partial_{i}\Phi^n_{\omega}(x+v)|\mathrm{d}(\mathbb{P}\times\varrho)&\overset{\eqref{22-11-02-02}}{\leq} a+b\int_{\Omega\times\mathbb{R}^d}\int_0^n\sup_{v\in O(\boldsymbol{0},1)}|\Phi^{\tau}_{\omega}(x+v)|\mathrm{d}\tau\mathrm{d}(\mathbb{P}\times\varrho)
\\\notag&=a+b\int_0^n\int_{\Omega\times\mathbb{R}^d}\sup_{v\in O(\boldsymbol{0},1)}|\Phi^{\tau}_{\omega}(x+v)|\mathrm{d}(\mathbb{P}\times\varrho)\mathrm{d}\tau
\\\notag&\leq a+b\int_0^n\int_{\Omega\times\mathbb{R}^d}|x|+1\mathrm{d}(\mathbb{P}\times\varrho)\mathrm{d}\tau
\\\label{23-2-16-2214}&\overset{\text{\eqref{23-10-9-1634-2} in \Cref{23-2-12-1854}}}= a+nb+nb\int_{\mathbb{R}^d}|x|\mathrm{d}\varrho(x)<+\infty.
\end{align}
 It follows that \textbf{Assumption 1} holds.

Verification for \textbf{Assumption 2:} For any $i,j\in\{1,2\dots, d\}$, $t\in (0,+\infty)$ and  $x\in \mathbb{R}^d$, following  equality hods  for $\mathbb{P}$-a.s. $\omega\in\Omega$, 
\begin{align*}
\partial_{ij}\Phi^t_{\omega}(x)&=\int_0^t\left(B\big(\Phi^{\tau}_{\omega}(x), \partial_{ij}\Phi^{\tau}_{\omega}(x)\big)+B\big(\partial_{ij}\Phi^{\tau}_{\omega}(x),\Phi^{\tau}_{\omega}(x)\big)\right)\mathrm{d}\tau
\\&\quad+\int_0^t\left(B\big(\partial_{i}\Phi^{\tau}_{\omega}(x),\partial_{j}\Phi^{\tau}_{\omega}(x)\big)+B\big(\partial_{j}\Phi^{\tau}_{\omega}(x),\partial_{i}\Phi^{\tau}_{\omega}\big)-\epsilon A\big(\partial_{ij}\Phi^{\tau}_{\omega}(x)\big)\right)\mathrm{d}\tau
\end{align*}
which implies that
\begin{align*}
|\partial_{ij}\Phi^1_{\omega}(x)|&\leq\int_0^1(a+b|\Phi^{\tau}_{\omega}(x)|)\cdot|\partial_{ij}\Phi^{\tau}_{\omega}(x)|\mathrm{d}\tau+b\int_0^1|\partial_{j}\Phi^{\tau}_{\omega}(x)|\cdot|\partial_{i}\Phi^{\tau}_{\omega}(x)| \mathrm{d}\tau
\\&\overset{\eqref{22-11-02-02}}{\leq}\int_0^1(a+b|\Phi^{\tau}_{\omega}(x)|)\cdot|\partial_{ij}\Phi^{\tau}_{\omega}(x)|\mathrm{d}\tau+b\exp\left(2\int_0^1a+b|\Phi^{\tau}_{\omega}(x)|\mathrm{d}\tau\right).
\end{align*}
Using Gronwall's inequality again,   we have
\begin{align}
\label{22-1-9-1417}
 |\partial_{ij}\Phi^{1}_{\omega}(x)|\leq b\exp\left(3\int_0^1a+b|\Phi^{\tau}_{\omega}(x)|\mathrm{d}\tau\right).
\end{align}
 Combing  \eqref{23-2-16-2214} and \eqref{22-1-9-1417}, one has that 
\begin{align*}
\int_{\Omega\times\mathbb{R}^d}\sup_{v\in O(\boldsymbol{0},1)}\log^+ |\partial_{ij}\Phi^1_{\omega}(x)|\mathrm{d}(\mathbb{P}\times\varrho)\leq& (b+3a)+ 3b\int_{\Omega\times\mathbb{R}^d}\int_0^1\sup_{v\in O(\boldsymbol{0},1)}|\Phi^{\tau}_{\omega}(x)|\mathrm{d}\tau\,\mathrm{d}(\mathbb{P}\times\varrho)
\\\leq&(b+3a)+ 3b\int_0^1\int_{\Omega\times\mathbb{R}^d}\sup_{v\in O(\boldsymbol{0},1)}|\Phi^{\tau}_{\omega}(x)|\mathrm{d}\tau\,\mathrm{d}(\mathbb{P}\times\varrho)
\\<&+\infty.
\end{align*}
Therefore,  $\int_{\Omega\times\mathbb{R}^d}\log^+\left(\sup_{v\in O(\boldsymbol{0},1)}|\mathrm{d}^2_{x+v}\Phi^1_{\omega}|\right)\mathrm{d}(\mathbb{P}\times\varrho)<+\infty$.

Verification for \textbf{Assumption 3:} Using the backward flow of \Cref{22-11-21-01},  for any $t\geq 0$ and  $x\in\mathbb{R}^d$,  one has that 
\begin{align*}
(\Phi^{t}_{\omega})^{-1}x&=x-\int_{0}^tB\big(\Phi^{\tau}_{\omega}\circ(\Phi^t_{\omega})^{-1}x, \Phi^{\tau}_{\omega}\circ(\Phi^t_{\omega})^{-1}x\big)-\epsilon A\big(\Phi^{\tau}_{\omega}\circ(\Phi^t_{\omega})^{-1}x\big)\mathrm{d}\tau-\sum_{i=1}^{d}\int_0^te_i\mathrm{d}W^i_{\tau}.
\end{align*}
for  $\mathbb{P}$-a.s. $\omega\in\Omega$.   Repeating above argument in verification for \textbf{Assumption 1} and \textbf{Assumption 2},   one  can verify that
$$\int_{\Omega\times\mathbb{R}^d}\log^+|\mathrm{d}_{\Phi^1_{\omega}(x)}\big(\Phi^1_{\omega}\big)^{-1}|+\log^+\big(\sup_{v\in O(\boldsymbol{0}, 1)}|\mathrm{d}^2_{\Phi^1_{\omega}(x+v)}\big(\Phi^1_{\omega}\big)^{-1}|\big)\mathrm{d}(\mathbb{P}\times\varrho)<+\infty.$$

By \Cref{23-2-16-1653}, all that remains  to be  proved  is that $\Phi$ has positive top Lyapunov exponent  with respect to the stationary measure $\varrho$.  This result is proved by Bedrossian and Punshon-Smith. The reader can see the details in \cite[Theorem 1.1 and Remark 1.5]{bedrossian2021chaos}.  For all,  this finishes the proof of  \Cref{23-2-2-2148}.
\end{proof}

\section{Revisit of weak-horseshoes}
\label{22-03-08-01}

In this section, we review the proof of  \cite[Theorem 2.1]{HL} to obtain  measurable  weak-horseshoes for GSNS, namely, \Cref{weakhoreshoes}.  The measurable property originates from  the property  disintegration of measure and return times.

\subsection{Preparing lemmas}

In this subsection, we mainly present some preparing  lemmas.   Following lemma is summarized from \cite[Lemma 3.4, Lemma 3.5, Lemma 4.3]{HL}.
\begin{lemma} \label{key-lem}
Assume that $\pi:(X,\mathscr{X},\mu,T)\rightarrow (Z,\mathscr{Z},\eta,R)$ is  a  factor map between two  invertible  ergodic  measure-preserving dynamical systems on Polish probability spaces. Let  $\pi_1:(X,\mathscr{X},\mu,T)\rightarrow
(Y,\mathscr{Y},\nu ,S)$ be the relative Pinsker factor map of 
$\pi$ and  $\mu=\int_Y  \mu_y d \nu(y)$ be the disintegration of $\mu$ relative to $Y$. If  $h_\mu(T|Z)>0$, then
\begin{enumerate}
\item\label{key-lem-1} $\mu_y$ is non-atomic (that is $\mu_y(\{x\})=0$ for each $x\in  X$) for $\nu$-a.s. $y\in Y$;
\item\label{key-lem-2} $(X\times X, \mathscr{X}\otimes\mathscr{X}, \mu\times_Y\mu, T\times T)$ is an ergodic measure-preserving dynamical system on  the Polish probability space, where
$$(X\times X,\mathscr{X}\otimes \mathscr{X},\mu \times_Y\mu, T\times T)$$
 is the product of $(X,\mathscr{X},\mu,T)$ with itself relative to $\pi$ (see \Cref{23-2-21-2054});
\item\label{key-lem-3} if $U_1, U_2\in\mathscr{X}$ with $\mu\times_Y\mu(U_1\times U_2)>0$,  then there exists a Borel measurable subset $\boldsymbol{A}$ of  $Y$ with $\nu(\boldsymbol{A})>0$, a positive integer $\boldsymbol{r}>2$ and a Borel measurable partition
$\alpha=\{B_1,\dots, B_{\boldsymbol{r}}\}$ of $X$ such that $\pi_1^{-1}(\boldsymbol{A})\cap B_i\subset U_i, i=1,2$ and $\mu_y(B_j)=1/\boldsymbol{r}, j=1,\dots, \boldsymbol{r}$ for $\nu$-a.s. $y\in Y$.
\end{enumerate}
 \end{lemma}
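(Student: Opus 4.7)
The plan is to prove (1), (2), (3) in order, each leveraging the previous. The quantitative input comes from \Cref{22-10-19-06-01}: using the equivariance $T_*\mu_y=\mu_{Sy}$ together with a Birkhoff averaging under $(S,\nu)$ applied to the fiberwise partition-subadditivity inequality, one obtains $h_\mu(T,\alpha|Z)=h_\mu(T,\alpha|Y)\leq H_\mu(\alpha|Y)$ for every finite Borel partition $\alpha$ of $X$. Hence $h_\mu(T|Z)>0$ furnishes some $\alpha$ with $H_\mu(\alpha|Y)=\int_Y H_{\mu_y}(\alpha)\,d\nu(y)>0$, i.e., $\mu_y$ is non-trivial on $\alpha$-atoms for $y$ in a $\nu$-positive set; this is the concrete handle I will exploit.

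For (1), I argue by contradiction. If $\mu_y$ carried an atom on a $\nu$-positive set, a measurable selection on disintegrations over Polish spaces, combined with $T_*\mu_y=\mu_{Sy}$, produces a countable $T$-equivariant family of Borel graphs $\{G_i\}\subset X$. The partition $\beta=\{G_1,G_2,\dots,X\setminus\bigcup_i G_i\}$ is deterministic along $T$-orbits because $T$ merely permutes the $G_i$'s, so $h_{\mu_y}(T,\beta)=0$ for $\nu$-a.s.\ $y$, whence $h_\mu(T,\{G_i,G_i^c\}|Z)=0$ for each $i$. By the definition of $\mathcal{P}_\mu(\pi)$ each $G_i$ then lies in $\pi_1^{-1}\mathscr{Y}$, forcing the atom locations to be $Y$-measurable; combining with an $\alpha$ witnessing $h_\mu(T|Z)>0$ refines $\mathscr{Y}$ by a zero-relative-entropy sub-$\sigma$-algebra that strictly enlarges it, contradicting the maximality of the relative Pinsker factor.

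For (2), I adapt the classical Pinsker-ergodicity argument relatively: any $T\times T$-invariant Borel set $E\subset X\times X$ of $\mu\times_Y\mu$-measure in $(0,1)$ produces, via $(\mu\times_Y\mu)_y=\mu_y\times\mu_y$ and Rokhlin's disintegration, a $T$-invariant sub-$\sigma$-algebra of $\mathscr{X}$ strictly enlarging $\pi_1^{-1}\mathscr{Y}$ yet carrying zero entropy relative to $Z$ (a symbolic joining/Abramov--Rokhlin entropy computation), again violating maximality of $\mathcal{P}_\mu(\pi)$; hence $\mathbf{1}_E$ is $\mu\times_Y\mu$-a.s.\ constant.

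For (3), since $\mu\times_Y\mu(U_1\times U_2)=\int_Y\mu_y(U_1)\mu_y(U_2)\,d\nu(y)>0$, Fubini yields a constant $c\in(0,1/2)$ and a Borel $\boldsymbol{A}\subset Y$ with $\nu(\boldsymbol{A})>0$ and $\mu_y(U_i)\geq c$ for all $y\in\boldsymbol{A}$, $i=1,2$. Choose any integer $\boldsymbol{r}>\max(2,2/c)$. The non-atomicity of $\mu_y$ from (1), combined with the parametrized isomorphism of non-atomic standard Borel probability spaces with $([0,1],\mathrm{Leb})$ (standard for Polish-space disintegrations), lets me first carve Borel sets $B_1(y)\subset U_1\cap\pi_1^{-1}(y)$ and $B_2(y)\subset(U_2\setminus B_1)\cap\pi_1^{-1}(y)$ of $\mu_y$-mass $1/\boldsymbol{r}$ for $y\in\boldsymbol{A}$, extend $B_1,B_2$ onto fibers over $Y\setminus\boldsymbol{A}$ by analogous slicing, and finally split the remaining fiber mass $1-2/\boldsymbol{r}$ into $\boldsymbol{r}-2$ further pieces $B_3,\dots,B_{\boldsymbol{r}}$ of $\mu_y$-mass $1/\boldsymbol{r}$ each. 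The main obstacle I expect lies in (1): the $S$-equivariant bookkeeping of the atom graphs and the rigorous verification that every $G_i$ lies in $\mathcal{P}_\mu(\pi)$; by comparison (2) is a direct relativization of classical Pinsker ergodicity, and (3) is technically routine but tedious given (1).
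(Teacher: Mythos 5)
You should first note that the paper itself gives no proof of \Cref{key-lem}: it is quoted verbatim from \cite[Lemmas 3.4, 3.5 and 4.3]{HL}, so the relevant comparison is with the proofs there. Measured against those, your plan has a genuine gap at item \eqref{key-lem-2}, which is where the real content of the lemma sits. The reduction you assert --- that a $T\times T$-invariant set $E$ with $\mu\times_Y\mu(E)\in(0,1)$ ``produces a $T$-invariant sub-$\sigma$-algebra of $\mathscr{X}$ strictly enlarging $\pi_1^{-1}\mathscr{Y}$ yet carrying zero entropy relative to $Z$'' --- is never constructed, and it is not a routine relativization of anything classical. The set $E$ lives on $X\times X$, not on $X$; the one canonical invariant object it induces on $X$, the slice function $x\mapsto\mu_{\pi_1(x)}\big(\{x':(x,x')\in E\}\big)$, is $T$-invariant (by $T_*\mu_y=\mu_{Sy}$ and invariance of $E$) and hence $\mu$-a.e.\ equal to the constant $\mu\times_Y\mu(E)$ by ergodicity of $(X,\mu,T)$, so it generates no enlargement of $\pi_1^{-1}\mathscr{Y}$ at all. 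Manufacturing from $E$ a zero-relative-entropy sub-$\sigma$-algebra of $\mathscr{X}$ strictly above $\pi_1^{-1}\mathscr{Y}$ is essentially the theorem one is trying to prove, and the phrase ``a symbolic joining/Abramov--Rokhlin entropy computation'' does not supply it. The known route (and the one behind \cite[Lemma 3.5]{HL}) is genuinely heavier: the extension $\pi_1\colon X\to Y$ is relatively c.p.e.\ over $Z$, relatively c.p.e.\ extensions are relatively weakly mixing (Glasner--Thouvenot--Weiss \cite{MR1786718}, or via relative entropy tuples \cite{MR234267}), and relative weak mixing over the ergodic base $(Y,\nu,S)$ yields ergodicity of $\mu\times_Y\mu$. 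None of these ingredients appears in your plan. Note also that the ``maximality'' you invoke in both (1) and (2) uses the nontrivial converse fact that every set in $\mathcal{P}_\mu(\pi)$ has zero relative entropy (i.e.\ the generating family is already a $\sigma$-algebra), which you never establish.

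Items \eqref{key-lem-1} and \eqref{key-lem-3} are workable, but (1) as written has loose ends. The map $T$ does not permute your graphs $G_i$ by a fixed permutation --- the index map depends on the fiber --- so the claim $h_{\mu_y}(T,\{G_i,G_i^c\})=0$ needs an argument (invertibility plus Poincar\'e recurrence of $G_i$ makes the future $\{G_i,G_i^c\}$-itinerary of an atom pin it down within its fiber), and the endgame is garbled: once $G_i\in\pi_1^{-1}\mathscr{Y}\pmod{\mu}$ you get $\mu_y(G_i)\in\{0,1\}$ for $\nu$-a.s.\ $y$, so either the atom masses lie in $(0,1)$ (immediate contradiction) or $\mu_y$ is Dirac a.e., in which case $H_{\mu_y}(\alpha)=0$ for every finite $\alpha$ and \eqref{22-12-12-2031-2} in \Cref{22-10-19-06-01} gives $h_\mu(T|Z)=0$; there is no ``strict enlargement of $\mathscr{Y}$'' to speak of. A shorter standard argument avoids graphs entirely: the maximal atom mass is an $S$-invariant function, hence a constant $c>0$; the set of maximal atoms is $T$-invariant of measure $kc>0$, so $kc=1$ by ergodicity, a.e.\ fiber is uniform on $k$ points, and then $h_\mu(T,\alpha|Y)\le\lim_n n^{-1}\log k=0$ for every $\alpha$, contradicting $h_\mu(T|Z)>0$ via \Cref{22-10-19-06-01}. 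Item (3) is fine as you describe --- it uses only (1), the positivity of $\int_Y\mu_y(U_1)\mu_y(U_2)\,\mathrm{d}\nu$, and a standard jointly measurable quantile/selection construction on the disintegration.
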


Following combinatorial  lemma is well-known which is a  Karpovsky-Milman-Alon’s generalization of the Sauer-Perles-Shelah lemma. For example, the reader can  see it in  \cite[Corollary 1]{MR710891}.
\begin{lemma}
\label{22-10-10-01}
Given a positive integer $\boldsymbol{r}\geq 2$ and $\boldsymbol{\lambda}\in(1,+\infty)$, there exists a positive constant $\boldsymbol{c}$ such that for all $n\in\mathbb{N}$, if $\mathcal{R}\subset\{1,\cdots,r\}^{\{1,\dots,n\}}$
satisfies $|\mathcal{R}|\geq \big((\boldsymbol{r}-1)\boldsymbol{\lambda}\big)^n$, then there is a subset  $J(n,\mathcal{R})$ of $\{1,\dots,n\}$ with $|J(n,\mathcal{R})|\geq \boldsymbol{c}n$ such that for any $s\in \{1,\dots, \boldsymbol{r}\}^{J(n,\mathcal{R})}$, there exists $\check{s}\in\mathcal{R}$ with $s(j)=\check{s}(j)$ for each $j\in J(n,\mathcal{R})$.
\end{lemma}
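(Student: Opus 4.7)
The plan is to deduce this statement from a multi-valued Sauer-Shelah--type trace inequality (the content of Alon \cite{MR710891}) by contradiction, then calibrate $\boldsymbol{c}$ against $\boldsymbol{r}$ and $\boldsymbol{\lambda}$ by elementary asymptotics. Call $J\subset\{1,\dots,n\}$ \emph{fully shattered} by $\mathcal{R}$ if $\{\check{s}|_{J}:\check{s}\in\mathcal{R}\}$ equals all of $\{1,\dots,\boldsymbol{r}\}^{J}$; the goal is to produce such a $J$ of size at least $\boldsymbol{c}n$.

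The combinatorial core I would first establish is: if no subset of $\{1,\dots,n\}$ of size $d$ is fully shattered by $\mathcal{R}$, then
\[
|\mathcal{R}|\;\leq\;\sum_{i=0}^{d-1}\binom{n}{i}(\boldsymbol{r}-1)^{n-i}.
\]
I would prove this by a coordinate-wise downward shifting argument: for each coordinate $k\in\{1,\dots,n\}$, an operator $S_{k}$ iteratively replaces the value $\check{s}(k)$ of each $\check{s}\in\mathcal{R}$ by smaller values in $\{1,\dots,\boldsymbol{r}\}$ whenever the modified vector is not already in the family. Standard checks show that $|S_{k}\mathcal{R}|=|\mathcal{R}|$, that full shattering is never created by $S_{k}$, and that once $\mathcal{R}$ is stable under every $S_{k}$ each of its elements is uniquely encoded by a fully shattered subset $J\subset\{1,\dots,n\}$ (of size necessarily $<d$) together with an arbitrary labelling of the complementary coordinates by values in $\{1,\dots,\boldsymbol{r}-1\}$; counting such pairs yields the stated bound. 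The binary case is the classical Sauer-Shelah lemma, and the multi-valued extension follows by iterating pairwise $(j,j{+}1)$-compressions over $j=1,\dots,\boldsymbol{r}-1$.

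Granted this inequality, suppose toward contradiction that no $\lceil \boldsymbol{c}n\rceil$-subset is fully shattered. For $\boldsymbol{c}<1/\boldsymbol{r}$ the ratio of consecutive terms $(n-i)/\bigl((i+1)(\boldsymbol{r}-1)\bigr)$ is at least $1$ on $0\leq i\leq \boldsymbol{c}n$, so the sum is bounded by $n$ times its last term. Combining with the standard estimate $\binom{n}{\lceil \boldsymbol{c}n\rceil}\leq(e/\boldsymbol{c})^{\boldsymbol{c}n}$ gives
\[
\bigl((\boldsymbol{r}-1)\boldsymbol{\lambda}\bigr)^{n}\;\leq\;|\mathcal{R}|\;\leq\;n\bigl((e/\boldsymbol{c})^{\boldsymbol{c}}(\boldsymbol{r}-1)^{1-\boldsymbol{c}}\bigr)^{n}.
\]
Taking $n$-th roots and sending $n\to\infty$ yields $\boldsymbol{\lambda}\leq (e/\boldsymbol{c})^{\boldsymbol{c}}(\boldsymbol{r}-1)^{-\boldsymbol{c}}$. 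Since $\boldsymbol{c}\log(e/\boldsymbol{c})\to 0$ and $\boldsymbol{c}\log(\boldsymbol{r}-1)\to 0$ as $\boldsymbol{c}\to 0^{+}$, the right-hand side tends to $1$, which contradicts $\boldsymbol{\lambda}>1$ for any sufficiently small $\boldsymbol{c}=\boldsymbol{c}(\boldsymbol{r},\boldsymbol{\lambda})>0$. Such a $\boldsymbol{c}$ therefore produces the required $J(n,\mathcal{R})$ for all large $n$; the residual small values of $n$ are absorbed by shrinking $\boldsymbol{c}$ further (the conclusion is vacuous once $\boldsymbol{c}n<1$).

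The main obstacle is the first stage. The multi-valued shifting argument is more delicate than its binary prototype because $S_{k}$ must preserve full shattering across all $\boldsymbol{r}$ target values simultaneously, and the ``downward monotone'' conclusion requires careful bookkeeping of which elements certify each shattered subset. Alon's paper bypasses much of this intricacy by iterating binary compressions, reducing to Sauer-Shelah applied to auxiliary $\{0,1\}$-valued families. A more conceptual but quantitatively weaker alternative would be a Shearer-type entropy argument on the projections $\mathcal{R}|_{J}$ with $|J|=\lceil \boldsymbol{c}n\rceil$, which avoids shifting altogether but still requires the same asymptotic calibration of $\boldsymbol{c}$ against $\boldsymbol{\lambda}$ as above.
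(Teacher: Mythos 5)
The paper itself offers no argument for this lemma: it is quoted directly as Corollary 1 of Alon's paper on the density of sets of vectors (the Karpovsky--Milman--Alon generalization of the Sauer--Perles--Shelah lemma), so your proposal, which reconstructs a proof, is necessarily a different route --- essentially you are re-deriving the cited result. The substance is sound: the trace inequality you isolate (if no $d$-subset is fully shattered then $|\mathcal{R}|\leq\sum_{i=0}^{d-1}\binom{n}{i}(\boldsymbol{r}-1)^{n-i}$) is exactly the Karpovsky--Milman bound behind Alon's corollary, and your calibration of $\boldsymbol{c}$ --- monotonicity of the terms up to $i\approx\boldsymbol{c}n$ when $\boldsymbol{c}<1/\boldsymbol{r}$, the estimate $\binom{n}{\lceil\boldsymbol{c}n\rceil}\leq(e/\boldsymbol{c})^{\boldsymbol{c}n}$, taking $n$-th roots, and letting $\boldsymbol{c}\to0^{+}$ so that $(e/\boldsymbol{c})^{\boldsymbol{c}}(\boldsymbol{r}-1)^{-\boldsymbol{c}}\to1<\boldsymbol{\lambda}$ --- is the standard and correct way to pass from that bound to the density statement. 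What the paper's citation buys is brevity; what your version buys is self-containedness and an explicit view of how $\boldsymbol{c}$ depends on $\boldsymbol{r}$ and $\boldsymbol{\lambda}$. Two points need repair if you keep your route. First, the multi-valued shifting argument is, as you yourself flag, only a heuristic as written: the claim that stability under every $S_k$ yields an injective encoding by a fully shattered set together with an $(\boldsymbol{r}-1)$-labelling of the complementary coordinates is precisely the delicate bookkeeping, and if you do not carry it out you should simply quote Karpovsky--Milman or Alon for this stage (which is all the paper does for the entire lemma). Second, the remark that the conclusion is ``vacuous once $\boldsymbol{c}n<1$'' is not literally correct, since $|J|\geq\boldsymbol{c}n>0$ still demands a nonempty fully shattered $J$; the fix is immediate, however, because $|\mathcal{R}|\geq\bigl((\boldsymbol{r}-1)\boldsymbol{\lambda}\bigr)^n>(\boldsymbol{r}-1)^n$ forces some single coordinate to attain all $\boldsymbol{r}$ values, so a shattered singleton exists, and one concludes by first fixing $\boldsymbol{c}_1$ with threshold $N_0(\boldsymbol{c}_1)$ for the asymptotic argument and then taking $\boldsymbol{c}=\min\{\boldsymbol{c}_1,1/N_0\}$ so that $1\geq\boldsymbol{c}n$ covers all $n<N_0$.
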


Now, we introduce a  variation measurable selection theorem. It is a quick result of  \cite[Lemma 5.25]{EW} by using measure-preserving property. 
\begin{lemma}
\label{22-10-19-06}
Let $\pi: (X,\mathscr{X},\mu)\to(Y,\mathscr{Y},\nu)$ be a factor map between two Polish  probability spaces. Then there exists a Borel measurable map $\hat{\pi}: Y\to X$ such that
$\pi\circ\hat{\pi}(y)=y$ for $\nu$-a.s. $y\in Y$.
\end{lemma}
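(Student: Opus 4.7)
The plan is to combine Lusin's theorem (which turns the abstract factor map into a continuous map on a large compact set) with a classical measurable selection theorem of Kuratowski--Ryll-Nardzewski type, which is essentially the content of the cited \cite[Lemma 5.25]{EW}. The role of the measure-preserving hypothesis is only to guarantee that the sets produced by Lusin's theorem project to full-measure subsets of $Y$.

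First I would apply Lusin's theorem to the Borel map $\pi$ on the Polish probability space $(X,\mathscr{X},\mu)$: for each $k\in\mathbb{N}$ there is a compact set $K_k\subset X$ with $\mu(K_k)>1-2^{-k}$ such that $\pi|_{K_k}:K_k\to Y$ is continuous. Let $Z_k:=\pi(K_k)$; since $K_k$ is compact and $\pi|_{K_k}$ continuous, $Z_k$ is a compact (hence Borel) subset of $Y$. Measure-preservation $\pi_*\mu=\nu$ gives
\[
\nu(Z_k)\ \geq\ \mu(\pi^{-1}(Z_k))\ \geq\ \mu(K_k)\ >\ 1-2^{-k},
\]
so $\nu\bigl(\bigcup_{k} Z_k\bigr)=1$.

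Next, on each $Z_k$ I would consider the closed-valued multifunction $y\mapsto K_k\cap\pi^{-1}(y)$ into the compact metric space $K_k$. The continuity of $\pi|_{K_k}$ makes this multifunction measurable (its graph is closed in $Z_k\times K_k$ and each fibre is non-empty and compact), so the Kuratowski--Ryll-Nardzewski theorem --- this is the substance of \cite[Lemma 5.25]{EW} --- yields a Borel measurable section $\hat\pi_k:Z_k\to K_k\subset X$ with $\pi\circ\hat\pi_k(y)=y$ for every $y\in Z_k$.

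Finally, paste these sections: set $W_1:=Z_1$, $W_k:=Z_k\setminus\bigcup_{j<k}Z_j$, fix any $x_0\in X$, and define $\hat\pi:Y\to X$ by $\hat\pi|_{W_k}=\hat\pi_k|_{W_k}$ and $\hat\pi(y)=x_0$ on the $\nu$-null residual set $Y\setminus\bigcup_k W_k$. Since the $W_k$ are disjoint Borel sets and each $\hat\pi_k$ is Borel, $\hat\pi$ is Borel measurable, and by construction $\pi\circ\hat\pi(y)=y$ for $\nu$-a.s.\ $y\in Y$. The only point requiring care is the measurability of the multifunction $y\mapsto K_k\cap\pi^{-1}(y)$; this is where one must use the continuity of $\pi|_{K_k}$ produced by Lusin's theorem rather than just Borel measurability of $\pi$, and it is the step where the Polish structure of $X$ and $Y$ is genuinely used.
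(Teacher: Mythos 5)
Your proposal is correct and follows essentially the same route as the paper's argument: Lusin's theorem to get compact sets on which $\pi$ is continuous, a Kuratowski--Ryll-Nardzewski-type measurable selection on each compact piece (the content of the cited \cite[Lemma 5.25]{EW}), the measure-preserving property $\pi_*\mu=\nu$ to see that the images $\pi(K_k)$ exhaust $Y$ up to a $\nu$-null set, and a disjointified pasting of the sections. No gaps to report.
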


\subsection{Weak-horseshoes}
Before stating  \Cref{weakhoreshoes}, we give  a one to one corresponding relationship between the element in $\{0,1\}^{\mathbb{Z}_+}$ and  subset of $\mathbb{Z}_+$  given by
 \begin{align}
 \label{23-1-5-1819}
 u\in\{0,1\}^{\mathbb{Z}_+}\mapsto \hat{u}=\{n\in\mathbb{Z}_+: u(n)=1\}.
 \end{align}
   Now, let's display  measurable weak-horseshoes of GSNS.
\begin{thm}
\label{weakhoreshoes}
Under the setting in  \Cref{23-2-2-2148}, there is  a pair of   non-empty
disjoint compact subsets $\{U_1, U_2\}$ of $\mathbb{R}^d$, a positive constant $\boldsymbol{b}$, a sequence  $\{\mathbf{N}_n\}_{n\in\mathbb{N}}$ of strictly  increasing Borel measurable maps $\mathbf{N}_n: \Omega\to\mathbb{Z}_+$\footnote{In this place, "strictly increasing" means that $\mathbf{N}_{n+1}(\omega)>\mathbf{N}_n(\omega)$ for any $n\in\mathbb{N}$ and $\mathbb{P}$-a.s. $\omega\in\Omega$.},  and a sequence $\{\gamma_n\}_{n\in\mathbb{N}}$ of Borel measurable  maps $\gamma_n: \Omega\to \{0,1\}^{\mathbb{Z}_+}$ such that for any $n\in\mathbb{N}$ and $\mathbb{P}$-a.s. $\omega\in\Omega$, one has that
\begin{enumerate}[(a)]
\item $\hat{\gamma}_n(\omega)\subset\{0,1,\dots, \mathbf{N}_n(\omega)-1\}$ and  $|\hat{\gamma}_n(\omega)|\geq \boldsymbol{b}\mathbf{N}_n(\omega)$;
\item  for any $s\in\{1,2\}^{\hat{\gamma}_n(\omega)}$, there exists an $x_s\in \mathbb{R}^d$ with $ \Phi^j_{\omega}(x_s)\in U_{s(j)}$ for any $j\in \hat{\gamma}_n(\omega)$.
\end{enumerate}
\end{thm}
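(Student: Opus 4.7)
The plan is to follow the argument of \cite[Theorem 2.1]{HL} step by step while upgrading every selection so that the output is Borel-measurable in $\omega$. Setting $T:=T^{(1)}$, $\mu:=\mathbb{P}\times\varrho$ and $X:=\Omega\times\mathbb{R}^d$, \Cref{23-2-2-2148} and \Cref{23-2-24-2006} give $h_\mu(T|\Omega)>0$. Let $\pi_1:(X,\mu,T)\to(Y,\nu,S)$ be the relative Pinsker factor map over the base system $(\Omega,\mathbb{P},\theta^1)$, with further factor $\pi_2:(Y,\nu,S)\to(\Omega,\mathbb{P},\theta^1)$ and disintegration $\mu=\int_Y\mu_y\,d\nu(y)$. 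I would first choose disjoint closed balls $U_1,U_2\subset\mathbb{R}^d$ so that $(\mu\times_Y\mu)(\tilde U_1\times\tilde U_2)>0$ with $\tilde U_i:=\Omega\times U_i$, which is available because $\varrho$ has full support and the relative product system is ergodic by \Cref{key-lem}(2). Then \Cref{key-lem}(3) supplies a Borel set $\boldsymbol{A}\subset Y$ of positive $\nu$-measure, an integer $\boldsymbol{r}>2$ and a Borel partition $\alpha=\{B_1,\dots,B_{\boldsymbol{r}}\}$ of $X$ with $\pi_1^{-1}(\boldsymbol{A})\cap B_i\subset\tilde U_i$ for $i=1,2$ and $\mu_y(B_j)=1/\boldsymbol{r}$ for $\nu$-a.s. $y\in Y$ and every $j$.

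The second step is to convert entropy into combinatorial abundance of codings. Fix $\boldsymbol{\lambda}\in(1,\boldsymbol{r}/(\boldsymbol{r}-1))$. \Cref{22-10-19-06-01}(ii) gives $\int_Y h_{\mu_y}(T^m,\alpha)\,d\nu(y)=h_\mu(T^m,\alpha|\Omega)\to H_\mu(\alpha|Y)=\log\boldsymbol{r}$ as $m\to\infty$, so by Egorov there exist $\boldsymbol{m}\in\mathbb{N}$ and a Borel set $\boldsymbol{A}'\subset\boldsymbol{A}$ with $\nu(\boldsymbol{A}')>0$ on which $h_{\mu_y}(T^{\boldsymbol{m}},\alpha)\geq\log((\boldsymbol{r}-1)\boldsymbol{\lambda})$. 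A fiberwise Shannon--McMillan--Breiman argument then yields, for $\nu$-a.s.\ $y\in\boldsymbol{A}'$ and $\mu_y$-a.s.\ $x$, that for every large $n$ the refined partition $\bigvee_{i=0}^{n-1}T^{-i\boldsymbol{m}}\alpha$ contains at least $((\boldsymbol{r}-1)\boldsymbol{\lambda})^n$ atoms of positive $\mu_y$-measure, and Birkhoff's ergodic theorem (applied on the factor $(Y,\nu,S^{\boldsymbol{m}})$ with indicator $1_{\boldsymbol{A}}$) lets us restrict to codings for which $\pi_1(T^{i\boldsymbol{m}}(\omega,x))\in\boldsymbol{A}$ at a positive-density set of indices $i$. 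Feeding this family of codings into \Cref{22-10-10-01} produces a subset $J\subset\{0,\dots,n-1\}$ with $|J|\geq\boldsymbol{c}n$ such that every $s\in\{1,2\}^J$ is realized by some atom; by the inclusion $\pi_1^{-1}(\boldsymbol{A})\cap B_i\subset\tilde U_i$, any point $x_s$ chosen from that atom satisfies $\Phi^{j\boldsymbol{m}}_\omega(x_s)\in U_{s(j)}$ for every $j\in J$.

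The final step is to promote this fiberwise picture to Borel-measurable data in $\omega$. I would apply \Cref{22-10-19-06} to $\pi_2$ to obtain a Borel section $\hat\pi_2:\Omega\to Y$, then apply it again to $\pi_1$ to select Borel-measurably a fiber point $\boldsymbol{x}(\omega)\in\pi_1^{-1}(\hat\pi_2(\omega))$ that is generic for all the tail estimates above. Define $\mathbf{N}_n(\omega)$ as the smallest integer strictly larger than $\mathbf{N}_{n-1}(\omega)$ of the form $k\boldsymbol{m}$ with $k\geq n$ for which the length-$k$ horseshoe around $\boldsymbol{x}(\omega)$ extracted above is realized, and let $\gamma_n(\omega)\in\{0,1\}^{\mathbb{Z}_+}$ be the indicator of the associated subset $J$ transported back into $\{0,\dots,\mathbf{N}_n(\omega)-1\}$ via the stride $\boldsymbol{m}$. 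Both maps are Borel because they are defined by first-hitting times of Borel events; the density bound (a) then follows with $\boldsymbol{b}:=\boldsymbol{c}/\boldsymbol{m}$, and (b) is immediate from the realization property. The principal obstacle will be coordinating the three measurable selections — of the section $\hat\pi_2$, of the generic point $\boldsymbol{x}(\omega)$, and of a realizing orbit $x_s$ for each coding $s$ — so that the Shannon--McMillan--Breiman and Birkhoff genericity conditions become explicit Borel subsets to which \Cref{22-10-19-06} can be applied.
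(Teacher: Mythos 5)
There is a genuine gap at the central counting step. In your outline you lower-bound the number of atoms of the full refinement $\bigvee_{i=0}^{n-1}T^{-i\boldsymbol{m}}\alpha$ with positive $\mu_y$-measure, apply \Cref{22-10-10-01} to this family of codings indexed by \emph{all} of $\{0,\dots,n-1\}$, and then conclude $\Phi^{j\boldsymbol{m}}_{\omega}(x_s)\in U_{s(j)}$ for every $j$ in the resulting set $J$. But the inclusion furnished by \Cref{key-lem}(3) is only $\pi_1^{-1}(\boldsymbol{A})\cap B_i\subset\Omega\times U_i$: membership of $T^{j\boldsymbol{m}}(\omega,x_s)$ in $B_{s(j)}$ yields $x_s$-orbit membership in $U_{s(j)}$ only at times $j$ when the Pinsker-factor orbit $S^{j\boldsymbol{m}}y$ lies in $\boldsymbol{A}$. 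The combinatorial lemma gives no control over where $J$ sits, and your Birkhoff step is vacuous in this respect (whether $S^{i\boldsymbol{m}}y\in\boldsymbol{A}$ depends only on $y$, not on the atom, so it does not ``restrict the codings''). Worse, a count of $((\boldsymbol{r}-1)\boldsymbol{\lambda})^n$ full codings does not lower-bound the number of codings \emph{restricted to the visit times}: when the visit density $\rho$ is small, all these codings may coincide on the visit-time coordinates (since $(1-\rho)\log\boldsymbol{r}$ can exceed $\log((\boldsymbol{r}-1)\boldsymbol{\lambda})$), in which case no horseshoe inside the visit set can be extracted. This is exactly what the paper's proof of \Cref{22-12-13-35} is organized around: the coding family $\mathcal{R}_k(z)$ is indexed by the successive return times $\boldsymbol{b}_1(z),\dots,\boldsymbol{b}_k(z)$ to $\boldsymbol{A}$, and \Cref{22-10-12-03} proves $|\mathcal{R}_k(z)|\geq\big((\boldsymbol{r}-1)2^{\boldsymbol{c}_0}\big)^k$ by a chain of conditional-entropy inequalities in which each visit to $\boldsymbol{A}$ contributes at least $\log(\boldsymbol{r}-1)+\delta(\cdot)$ (the function $\delta$ of \Cref{23-2-11-1615}); only then is \Cref{22-10-10-01} applied, so that the extracted set automatically consists of return times. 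Your SMB-based atom count cannot substitute for this.

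Two further points. First, your choice of $U_1,U_2$ is mis-justified: full support of $\varrho$ together with ergodicity of the relative product does not prevent the conditional measures $\mu_y$ from being atomic, in which case $\mu\times_Y\mu$ charges no product of disjoint balls; the correct ingredient is non-atomicity of $\mu_y$ from \Cref{key-lem}(1) (a consequence of $h_\mu(T|\Omega)>0$), combined with covering the complement of the fiberwise diagonal by countably many products $(\Omega\times U_1)\times(\Omega\times U_2)$ of rational balls at positive distance. Second, the relative Pinsker machinery and \Cref{key-lem} are stated for \emph{invertible} systems, while the Wiener shift and the skew product $T^{(1)}$ are not invertible; the paper first passes to the natural extension $(\bar{\Omega},\bar{\theta})$ before invoking them, and your plan skips this. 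Finally, selecting a single ``generic point'' $\boldsymbol{x}(\omega)$ measurably is unnecessary: the paper obtains measurability of $\mathbf{N}_n,\gamma_n$ from Borel measurability of the return-time functions and of the sets $\{z:\mu_z(\bigcap_j T^{-\boldsymbol{b}_j(z)}B_{s(j)})>0\}$, plus a single application of \Cref{22-10-19-06} to the factor $\pi_2$, while the realizing points $x_s$ come for free from positivity of the fiber measure.
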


\begin{proof}
Recall that $\Phi$ is the stochastic flow of GSNS on $\mathbb{R}^d$ over $(\Omega,\mathscr{F},\mathbb{P})$ defined as \eqref{23-2-18-1254}, $\varrho$ is the unique stationary measure of $\Phi$,  and $\theta^1: \Omega\to\Omega$  is the Winer shift with $\theta^1(\omega)=\omega(\cdot+1)-\omega(\cdot)$.
It is well-known that $(\Omega,\mathscr{F},\mathbb{P},\theta^1)$ is an ergodic measure-preserving dynamical system on the Polish probability space.

Let  $(\bar{\Omega},\bar{\mathscr{F}},\bar{\mathbb{P}},\bar{\theta} )$  be the invertible extension of $(\Omega,\mathscr{F},\mathbb{P},\theta)$, i.e.  $(\bar{\Omega},\bar{\mathscr{F}},\bar{\mathbb{P}},\bar{\theta} )$ is an invertible ergodic measure-preserving dynamical system such that  there is a factor map $\bar{\pi}_{\Omega}: (\bar{\Omega},\bar{\mathscr{F}},\bar{\mathbb{P}},\bar{\theta})\to (\Omega,\mathscr{F},\mathbb{P},\theta^1)$ (see \cite{MR0143873,MR0217258}).   Denoting   $X:=\bar{\Omega}\times \mathbb{R}^d$ and $\mathscr{X}$ as its Borel $\sigma$-algebra,  define a Borel measurable map $T$ on $\bar{\Omega}\times \mathbb{R}^d$ as following 
\begin{align}
T: X\to X, \quad (\bar{\omega}, x)\mapsto (\bar{\theta}\bar{\omega},  \Phi^1_{\bar{\pi}_{\Omega}\bar{\omega}}x).
\end{align}
Since \cite[Theorem 1.7.2]{A} and $\mathbb{P}\times\varrho$ is an invariant ergodic measure with respect to $T^{(1)}$ (see \eqref{23-10-9-1634-3} in \Cref{23-2-12-1854}), there exists a unique Borel measure $\mu$ on $(X,\mathscr{X})$  satisfying that 
 $$(\bar{\pi}_{\Omega}\times\text{Id}_{\mathbb{R}^d})_*\mu=\mathbb{P}\times\varrho,$$  
 and $(X,\mathscr{X},\mu, T)$ is an ergodic measure-preserving dynamical system.    Then
 $$\pi_{\bar{\Omega}}:\big(X, \mathscr{X},\mu, T\big)\rightarrow (\bar{\Omega},  \bar{\mathscr{F}},\bar{\mathbb{P}},\bar{\theta})\text{ with } (\bar{\omega},x)\mapsto \bar{\omega}$$ is a factor map between  two invertible ergodic measure-preserving dynamical systems and $h_{\mu}(T|\pi_{\bar{\Omega}})=h_{\mathbb{P}\times\varrho}(\Phi^{(1)})=h_{\varrho}(\Phi)>0$,  since \Cref{23-2-2-2148} and \Cref{23-2-24-2006}.

 Denote $\mathcal{P}_{\mu}(\pi_{\bar{\Omega}})$ as the relative Pinsker $\sigma$-algebra of $\pi_{\bar{\Omega}}$.  
 Then, there exists an invertible  measure-preserving dynamical system  $(Y,\mathscr{Y},\nu,S)$  on the  Polish probability space and  two factor maps
$$\pi_1: (X, \mathscr{X},\mu, T)\rightarrow (Y,\mathscr{Y},\nu,S),\quad\pi_2:(Y,\mathscr{Y},\nu,S)\rightarrow  (\bar{\Omega}, \bar{\mathscr{F}}, \bar{\mathbb{P}}, \bar{\theta}),$$ between
invertible measure-preserving dynamical systems on Polish probability spaces such that  $\pi_2\circ \pi_1=\pi_{\bar{\Omega}}$  and
$\pi_1^{-1}(\mathscr{Y})=\mathcal{P}_{\mu}(\pi_{\bar{\Omega}})\pmod{\mu}$.   

Let  $\mu=\int_Y\mu_y d \nu(y)$ be the disintegration relative to $Y$.    By \eqref{key-lem-1} and \eqref{key-lem-2} in \Cref{key-lem},  we know that the
measure-preserving dynamical system
$$(X\times X, \mathscr{X}\times\mathscr{X}, \mu\times_Y\mu, T\times T),$$
is ergodic and $\mu_y$ is non-atomic for $\nu$-a.s. $y\in Y$.  Next, we give a sufficient condition to ensure the existence of measurable full-horseshoes.
\begin{lemma}
\label{22-12-13-35}
For any a pair of disjoint closed balls $\{U_1, U_2\}$ of $\mathbb{R}^d$, if 
$$\mu\times_Y\mu\big((\bar{\Omega}\times U_1)\times (\bar{\Omega}\times U_2)\big)>0, $$
 then there exists a positive constant $\boldsymbol{b}$, a sequence $\{\mathbf{N}_n\}_{n\in\mathbb{N}}$ of strictly  increasing Borel measurable maps $\mathbf{N}_n:\bar{\Omega}\to\mathbb{N}$ and a sequence $\{\gamma_n\}_{n\in\mathbb{N}}$ of Borel measurable  maps $\gamma_n: \bar{\Omega}\to \{0,1\}^{\mathbb{Z}_+}$
such that for any $n\in\mathbb{N}$ and $\bar{\mathbb{P}}$-a.s. $\bar{\omega}\in\bar{\Omega}$ one has that
\begin{enumerate}[(a)]
\item $\hat{\gamma}_n(\bar{\omega})\subset\{0,1,\dots, \mathbf{N}_n(\bar{\omega})-1\}$ and  $|\hat{\gamma}_n(\bar{\omega})|\geq \boldsymbol{b}\mathbf{N}_n(\omega)$;
\item  for any $s\in\{1,2\}^{\hat{\gamma}_n(\bar{\omega})}$, there exists an $x_s\in \mathbb{R}^d$ with $ \Phi^j_{\bar{\omega}}(x_s)\in U_{s(j)}$ for any $j\in \hat{\gamma}_n(\bar{\omega})$.
\end{enumerate}
\end{lemma}
Now we are ready to prove  \Cref{weakhoreshoes} assuming  \Cref{22-12-13-35}. Due to \Cref{22-10-19-06}, it is clear that we only need to explain the  existence of $\{U_1, U_2\}$ in \Cref{22-12-13-35}. The proof of \Cref{22-12-13-35} will be 
carried later.

Let $\pi_{\mathbb{R}^d}: \bar{\Omega}\times \mathbb{R}^d\to \mathbb{R}^d$ be the projection and
\begin{align*}
&\Delta_{\pi_{\mathbb{R}^d}}:=\left\{ \big((\bar{\omega}_1,x_1),(\bar{\omega}_2,x_2)\big)\in X\times X: x_1=x_2\right\},
\\&\Delta_{X}:=\{\big((\bar{\omega}_1,x_1),(\bar{\omega}_2,x_2)\big)\in X\times X: (\bar{\omega}_1,x_1)=(\bar{\omega}_2,x_2)\}.
\end{align*}
Note that $\pi_1^{-1}(y)\subset\pi_{\bar{\Omega}}^{-1}(\pi_2(y))$ and $\mu_y(\pi_1^{-1}(y))=1$. It follows that 
\begin{align*}
\mu_y\times\mu_y(\Delta_{\pi_{\mathbb{R}^d}})&=\mu_y\times\mu_y\left(\Delta_{\pi_{\mathbb{R}^d}}\cap\big(\pi_{\bar{\Omega}}^{-1}(\pi_2(y))\times\pi_{\bar{\Omega}}^{-1}(\pi_2(y))\big)\right)
\\&\leq \mu_y\times\mu_y(\Delta_X)
\\&\overset{ \text{\Cref{key-lem} \eqref{key-lem-1}}}=0
\end{align*}
for $\nu$-a.s. $y\in Y$.    Hence, $\mu\times_Y\mu(X\times X\setminus \Delta_{\pi_{\mathbb{R}^d}})=1$.  Denote
\begin{align*}
&\mathfrak{O}:=\{\bar{O}(x, 1/n):  x\in\mathbb{Q}^d\text{ and }n\in\mathbb{N}\}, \text{ and }\mathfrak{U}:=\{(U_1, U_2)\in \mathfrak{O}\times\mathfrak{O}: \rho(U_1, U_2)>0\},
\end{align*}
where $\bar{O}(x, 1/n):=\{y\in\mathbb{R}^d: |x-y|\leq 1/n\}$, and  $\rho(U_1, U_2)=\inf_{x\in U_1,y\in U_2}|x-y|$ with $|\cdot|$ being standard norm on $\mathbb{R}^d$. 
It is clear that 
 \begin{align*}
 X\times X\setminus\Delta_{\pi_{\mathbb{R}^d}} =&\bigcup_{(U_1, U_2)\in\mathfrak{U}} (\bar{\Omega}\times U_1)\times (\bar{\Omega}\times U_2).
 \end{align*}
Therefore, there exists $\{U_1, U_2\}\in\mathfrak{U}$  such that $\mu\times_Y\mu\big((\bar{\Omega}\times U_1\big)\times\big(\bar{\Omega}\times U_2)\big)>0.$
This finishes the proof of \Cref{weakhoreshoes}.
\end{proof}

Finally, we divide into four steps to prove \Cref{22-12-13-35}.
\begin{proof}[Proof of \Cref{22-12-13-35}]

\textbf{Step 1:}  In this step, we mainly introduce function of estimation entropy   and recurrence time sequence.  The notations will be fixed through this proof and we use  boldface form to stress  some important notations.

Recall that  $\{U_1, U_2\}$ is a pair of disjoint closed balls of $\mathbb{R}^d$ with
$$\mu\times_Y\mu\Big((\bar{\Omega}\times U_1)\times (\bar{\Omega}\times U_2)\Big)>0.$$
By \eqref{key-lem-3} in Lemma \ref{key-lem}, there  exists a Borel measurable subset $\boldsymbol{A}$ of $Y$ with $\nu(\boldsymbol{A})>0$,
a positive integer $\boldsymbol{r}>2$, and a Borel measurable partition $\alpha=\{B_1, B_2,\dots, B_{\boldsymbol{r}}\}$ of $X$ such that
\begin{align}
\label{22-12-13-1138}
\pi_1^{-1}(\boldsymbol{A})\cap B_i\subset \Omega\times U_i  \text{ for }  i=1,2,
\end{align} and $\mu_y(B_j)=1/\boldsymbol{r}$, $j=1,\dots, \boldsymbol{r}$ for $\nu$-a.s. $y\in Y$.

\textbf{Function of estimation entropy:}  According to \eqref{key-lem-4} in \Cref{22-10-19-06-01}, we have
\begin{align*}
\lim_{l\to+\infty}h_{\mu}(T^l, \alpha|\bar{\Omega})&=H_{\mu}(\alpha|Y)=\sum_{j=1}^{\boldsymbol{r}}\int_Y-\mu_y(B_j)\log\mu_y(B_j)\mathrm{d}\nu(y)=\log\boldsymbol{r}.
\end{align*}
Thus, there is an $\boldsymbol{l}\in\mathbb{N}$  such that 
$$3\boldsymbol{c}_0:=h_{\mu}(T^{\boldsymbol{l}}, \alpha|\bar{\Omega})-\nu(Y\setminus \boldsymbol{A})\log \boldsymbol{r}- \nu(\boldsymbol{A})\log(\boldsymbol{r}-1)>0.$$

\begin{claim}
\label{23-2-11-1615}
 Define  a  Borel measurable function on $Y$ as follows,
\begin{align*}
\mathbf{\delta}(y):=\Big(h_{\mu_y}(T^{\boldsymbol{l}},\alpha)-\log(\boldsymbol{r}-1)\Big)1_{\boldsymbol{A}}(y),
 \end{align*}
 where $h_{\mu_y}(T^{\boldsymbol{l}},\alpha):=\lim_{n\to+\infty}H_{\mu_y}\Big(\alpha| \bigvee_{i=1}^nT^{-i\boldsymbol{l}}\alpha\Big)$.  Then there exists a  Borel measurable subset $\boldsymbol{D}$ of $Y$ such that $\nu(\boldsymbol{D})>0$ and,  for each $y\in \boldsymbol{D}$ one has that:
\begin{enumerate}[(i)]
\item $\lim_{m\to+\infty}\frac{1}{m}\sum_{i=0}^{m-1}1_{\boldsymbol{A}}(S^{\boldsymbol{l}i}y)$ exists, and it is greater than $0$.  Denoting the limiting  as $1_{\boldsymbol{A}}^*(y)$, then $1_{\boldsymbol{A}}^*(y)>0$;
\item $\lim_{m\to+\infty}\frac{1}{m}\sum_{i=0}^{m-1}\delta(S^{\boldsymbol{l}i}y)$ exists, and it is  greater than or equal to $3\boldsymbol{c}_0$. Denoting the limiting  as $\delta^*(y)$, then $\delta^*(y)\geq 3\boldsymbol{c}_0$;
\item for any $i\in\mathbb{Z},$ $(T^i)_*\mu_y=\mu_{S^iy}$.
\item\label{23-3-3-2157} for any $i\in\mathbb{Z}$, one has that 
$$1_{\boldsymbol{A}}(S^{\boldsymbol{l}i}y)\geq \frac{\mathbf{\delta}(S^{\boldsymbol{l}i})}{\log\boldsymbol{r}-\log(\boldsymbol{r}-1)}.$$
\end{enumerate}
\end{claim}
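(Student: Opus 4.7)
The plan is to combine the entropy identity from \Cref{22-10-19-06-01} with Birkhoff's pointwise ergodic theorem applied to the measure-preserving (possibly non-ergodic) system $(Y,\mathscr{Y},\nu,S^{\boldsymbol{l}})$. Since $\mu_y(B_j)=1/\boldsymbol{r}$ for $\nu$-a.s.\ $y$, one has $H_{\mu_y}(\alpha)=\log\boldsymbol{r}$, hence $h_{\mu_y}(T^{\boldsymbol{l}},\alpha)\leq\log\boldsymbol{r}$ for $\nu$-a.s.\ $y$. By \Cref{22-10-19-06-01}(i),
$$h_\mu(T^{\boldsymbol{l}},\alpha|\bar{\Omega}) = \int_Y h_{\mu_y}(T^{\boldsymbol{l}},\alpha)\,d\nu(y).$$
Splitting this integral over $\boldsymbol{A}$ and $Y\setminus\boldsymbol{A}$ and using the bound $h_{\mu_y}(T^{\boldsymbol{l}},\alpha)\leq\log\boldsymbol{r}$ on the complement together with the definition of $3\boldsymbol{c}_0$, one obtains $\int_{\boldsymbol{A}}(h_{\mu_y}(T^{\boldsymbol{l}},\alpha)-\log(\boldsymbol{r}-1))\,d\nu(y)\geq 3\boldsymbol{c}_0$, i.e.\ $\int_Y\delta\,d\nu\geq 3\boldsymbol{c}_0$.

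Next, apply the Birkhoff ergodic theorem for the bounded Borel functions $\delta$ and $1_{\boldsymbol{A}}$ on $(Y,\nu,S^{\boldsymbol{l}})$. Writing $\mathscr{I}_{\boldsymbol{l}}$ for the $S^{\boldsymbol{l}}$-invariant $\sigma$-algebra, the limits $\delta^*(y)=\mathbb{E}(\delta|\mathscr{I}_{\boldsymbol{l}})(y)$ and $1_{\boldsymbol{A}}^*(y)=\mathbb{E}(1_{\boldsymbol{A}}|\mathscr{I}_{\boldsymbol{l}})(y)$ exist for $\nu$-a.s.\ $y$, and $\int\delta^*\,d\nu=\int\delta\,d\nu\geq 3\boldsymbol{c}_0$. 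Set $\boldsymbol{D}_0:=\{y\in Y:\delta^*(y)\geq 3\boldsymbol{c}_0\}$, which is $S^{\boldsymbol{l}}$-invariant. Then $\nu(\boldsymbol{D}_0)>0$: otherwise $\delta^*<3\boldsymbol{c}_0$ $\nu$-a.s., which would force $\int\delta^*\,d\nu<3\boldsymbol{c}_0$ (as $\nu$ is a probability measure and $\delta^*$ is bounded), a contradiction. Property (ii) holds by construction on $\boldsymbol{D}_0$. For property (i), the pointwise bound $\delta(y)\leq(\log\boldsymbol{r}-\log(\boldsymbol{r}-1))\cdot 1_{\boldsymbol{A}}(y)$ passes to Birkhoff averages, yielding $\delta^*(y)\leq(\log\boldsymbol{r}-\log(\boldsymbol{r}-1))\cdot 1_{\boldsymbol{A}}^*(y)$, so that on $\boldsymbol{D}_0$,
$$1_{\boldsymbol{A}}^*(y)\geq\frac{3\boldsymbol{c}_0}{\log\boldsymbol{r}-\log(\boldsymbol{r}-1)}>0.$$

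Property (iii) is the disintegration identity stated in the paragraph preceding \Cref{23-2-21-2054}: for each fixed $i\in\mathbb{Z}$, $(T^i)_*\mu_y=\mu_{S^iy}$ holds on a $\nu$-full set; since $\mathbb{Z}$ is countable, intersecting over $i$ gives a $\nu$-full set on which (iii) holds for all $i$ simultaneously. Property (iv) is a tautology whenever $S^{\boldsymbol{l}i}y\notin\boldsymbol{A}$ (both sides vanish), and when $S^{\boldsymbol{l}i}y\in\boldsymbol{A}$ it reduces to $h_{\mu_{S^{\boldsymbol{l}i}y}}(T^{\boldsymbol{l}},\alpha)\leq\log\boldsymbol{r}$, which, by the first step, holds $\nu$-a.s.\ at each $i$ and hence on a $\nu$-full set for all $i\in\mathbb{Z}$ jointly. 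Define $\boldsymbol{D}$ to be the intersection of $\boldsymbol{D}_0$ with the two full-measure sets just produced; then $\nu(\boldsymbol{D})>0$ and all four properties hold on $\boldsymbol{D}$.

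The only genuinely substantive step is deducing the quantitative lower bound in (i): merely knowing $1_{\boldsymbol{A}}^*>0$ on a positive measure set would follow from ergodicity and $\nu(\boldsymbol{A})>0$, but what is actually needed downstream (for the combinatorial extraction via \Cref{22-10-10-01}) is that the frequency of returns to $\boldsymbol{A}$ is bounded below by a definite constant depending only on $\boldsymbol{c}_0$ and $\boldsymbol{r}$. This quantitative lower bound is precisely what the entropy comparison $\delta^*\leq(\log\boldsymbol{r}-\log(\boldsymbol{r}-1))\cdot 1_{\boldsymbol{A}}^*$ supplies, and it is the reason (ii) has to be proved before (i) rather than the other way around.
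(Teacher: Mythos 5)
Your proposal is correct and follows essentially the same route as the paper: the key integral estimate $\int_Y\delta\,\mathrm{d}\nu\geq 3\boldsymbol{c}_0$ via \Cref{22-10-19-06-01}\eqref{22-12-12-2031-2} and the bound $h_{\mu_y}(T^{\boldsymbol{l}},\alpha)\leq\log\boldsymbol{r}$, followed by the Birkhoff ergodic theorem for $S^{\boldsymbol{l}}$ and the disintegration identity, which is exactly the paper's argument (the paper merely leaves the Birkhoff/positive-measure details implicit, and you supply them, including the pointwise inequality underlying item \eqref{23-3-3-2157}).
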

\begin{proof}[Proof of \Cref{23-2-11-1615}]
Note that   $\nu(\boldsymbol{A})>0$,   $h_{\mu_y}( T^{\boldsymbol{l}},\alpha)\leq\log\boldsymbol{r}$  for $\nu$-a.s. $y\in Y$,  and 

\begin{align*}
\int_Y\delta(y)d\nu(y)&=\int_{\boldsymbol{A}}h_{\mu_y}(T^{\boldsymbol{l}},\alpha)\mathrm{d}\nu(y)-\nu(\boldsymbol{A})\log(\boldsymbol{r}-1)
\\&=\int_Yh_{\mu_y}(T^{\boldsymbol{l}},\alpha)\mathrm{d}\nu(y)-\int_{Y\setminus \boldsymbol{A}}h_{\mu_y}(T^{\boldsymbol{l}},\alpha)\mathrm{d}\nu(y)-\nu(\boldsymbol{A})\log(\boldsymbol{r}-1)
\\&\overset{\text{\eqref{22-12-12-2031-2} in \Cref{22-10-19-06-01}}}{\geq} h_{\mu}(T^{\boldsymbol{l}},\alpha|\Omega)-\nu(Y\setminus \boldsymbol{A})\log \boldsymbol{r}-\nu(\boldsymbol{A})\log(\boldsymbol{r}-1)=3\boldsymbol{c}_0.
\end{align*}
Then  the existence of $\boldsymbol{D}$ directly follows from Birkhoff ergodic theorem and  the property of measure of  disintegration.
    \end{proof}

 \textbf{Recurrence time sequence:}    Given $n\in\mathbb{N}$, define a map  $\boldsymbol{a}_n: \boldsymbol{D}\to \mathbb{Z}_+:=\mathbb{N}\cup\{0\} $ being the $n_{th}$ time return to $\boldsymbol{A}$ function under the transformation $S$ with $\boldsymbol{l}|\boldsymbol{a}_n(y)$ for any $y\in \boldsymbol{D}$, i.e. 
 $$ \boldsymbol{a}_n(y):=\inf\{\boldsymbol{l}(m-1): \sum_{k=0}^{m-1} 1_A(S^{\boldsymbol{l}k}y)=n\text{ and }m\in\mathbb{N}\}.$$ 
  Since for each $y\in \boldsymbol{D}$
 $$\lim_{m\to+\infty}\frac{1}{m}\sum_{k=0}^{m-1}1_{\boldsymbol{A}}(S^{\boldsymbol{l}k}y)=1_{\boldsymbol{A}}^*(y)>0,$$ 
$\boldsymbol{a}_n$ is well-defined for each $n\in\mathbb{N}$. For each $y\in\boldsymbol{D}$,  denoting 
 $$\mathfrak{a}(y)=\{\boldsymbol{a}_1(y), \boldsymbol{a}_2(y), \cdots\},$$
 then  we have
\begin{align}
\notag\lim_{m\to+\infty}\frac{|\mathfrak{a}(y)\cap\{0,1,\dots, m-1\}|}{m}&=\lim_{m\to+\infty}\frac{|\mathfrak{a}(y)\cap\{0,1,\dots, \boldsymbol{l}(m-1)\}|}{\boldsymbol{l}m}
\\\notag&=\lim_{m\to+\infty}\frac{1}{\boldsymbol{l}m}\sum_{k=0}^{m-1}1_{\boldsymbol{A}}(S^{\boldsymbol{l}k}y)
\\\notag&\overset{\text{ \Cref{23-2-11-1615} \eqref{23-3-3-2157}}}\geq\lim_{m\to+\infty}\frac{1}{\boldsymbol{l}m}\sum_{k=0}^{m-1}\frac{\delta(S^{\boldsymbol{l}k}y)}{\log \boldsymbol{r}-\log(\boldsymbol{r}-1)}
\\\notag&=\frac{\delta^*(y)}{\boldsymbol{l}\log\frac{\boldsymbol{r}}{\boldsymbol{r}-1}}
\\\label{22-10-10-02}&\geq  \frac{3\boldsymbol{c}_0}{\boldsymbol{l}\log\frac{\boldsymbol{r}}{\boldsymbol{r}-1}}=:3\boldsymbol{c}_1.
\end{align}

Next, we will show that $\{\boldsymbol{a}_n\}_{n\in\mathbb{N}}$  is  a family of  Borel measurable maps $\boldsymbol{a}_n: \boldsymbol{D}\to\mathbb{Z}_+$.   Noting that  $\{y\in \boldsymbol{D}: \boldsymbol{a}_1(y)=0\}=\boldsymbol{D}\cap \boldsymbol{A}$  and for any $k\in\mathbb{N}$
$$\{y\in \boldsymbol{D}:  \boldsymbol{a}_1(y)=\boldsymbol{l}k\}= \boldsymbol{D}\cap S^{-\boldsymbol{l}k}\boldsymbol{A}\cap\left(\bigcap_{i=0}^{k-1} S^{-\boldsymbol{l}i}(Y\setminus\boldsymbol{A})\right),$$
then $\boldsymbol{a}_1$ is Borel measurable.  For any $n\in\mathbb{N}$, suppose that $\boldsymbol{a}_n$ is Borel measurable.  Then $\boldsymbol{a}_{n+1}$ is also Borel measurable, because
$$\{y\in \boldsymbol{D}: \boldsymbol{a}_{n+1}(y)=\boldsymbol{l}n\}=\boldsymbol{D}\cap \left(\bigcap_{i=0}^{n} S^{-\boldsymbol{l}i}\boldsymbol{A}\right)\in\mathscr{Y},$$ and for any $k\geq n+1$
\begin{align*}
&\{y\in \boldsymbol{D}: \boldsymbol{a}_{n+1}(y)=\boldsymbol{l}k\}
\\&=\boldsymbol{D}\cap S^{-\boldsymbol{l}k}\boldsymbol{A}\cap \left(\bigcup_{m=n}^{k-1}\bigcap_{i=m}^{k-1} S^{-\boldsymbol{l}i}(Y\setminus \boldsymbol{A})\cap\{y\in\boldsymbol{D}: \boldsymbol{a}_n(y)=\boldsymbol{l}(m-1)\}\right)\in \mathscr{Y}.
\end{align*}
Thus,  $\{\boldsymbol{a}_n\}_{n\in\mathbb{N}}$ is a family of  Borel measurable maps.

Note that $\boldsymbol{E}:=\bigcup_{i=0}^{+\infty}S^{-i}\boldsymbol{D}$
 is a $\nu$-full measure subset of $Y$, since $(Y,\mathscr{Y},\nu, S)$ is an ergodic measure-preserving  dynamical system and $\nu(\boldsymbol{D})>0$.  Let 
 \begin{itemize}
\item $\tau: \boldsymbol{E}\to\mathbb{Z}_+:=\mathbb{N}\cup\{0\}$ be the first time return to $\boldsymbol{D}$  under the transformation $S$, i.e. 
 $$\tau(z):=\inf\{n\in\mathbb{Z}_+:  S^nz\in\boldsymbol{D}\};$$ 
  \item  $\{\boldsymbol{b}_n\}_{n\in\mathbb{N}}$ be a sequence of Borel measurable maps 
 $$\boldsymbol{b}_n: \boldsymbol{E}\to \mathbb{Z}_+ \text{ with } \boldsymbol{b}_n(z)=\tau(z)+\boldsymbol{a}_n( S^{\tau(z)}z).$$ 
 \end{itemize}
 It is clear that 
 \begin{enumerate}[(I)]
 \item $\{\boldsymbol{b}_n\}_{n\in\mathbb{N}}$ is a sequence of Borel measurable maps $\boldsymbol{b}_n: \boldsymbol{E}\to\mathbb{Z}_+$;
 \item  denoting  $\mathfrak{b}(z)=\{\boldsymbol{b}_1(z), \boldsymbol{b}_2(z),\dots\},$  then by  \eqref{22-10-10-02} one has that for each $z\in \boldsymbol{E}$, 
\begin{align}
\label{22-10-13-01}
\lim_{m\to+\infty}\frac{|\mathfrak{b}(z)\cap\{0,1,\dots, m-1\}|}{m}\geq3\boldsymbol{c}_1;
\end{align}
\item  for any  $z\in \boldsymbol{E}$, one has that  $S^{j}z \in \boldsymbol{A}$ for any $j\in\mathfrak{b}(z)$.
 \end{enumerate}

\textbf{Step 2}:  Recall that $\{B_1,\cdots, B_{\boldsymbol{r}}\}$ is a finite Borel measurable partition of  of $X$, which is define in \textbf{Step 1}. In this step,  we mainly use   positive entropy property of systems  to estimate the "hitting freedom" of the elements $\{B_1,\cdots, B_{\boldsymbol{r}}\}$ along $\mathfrak{b}(z)$ for each  $z\in\boldsymbol{E}$. Precisely, it is formulated as  following claim.

\begin{claim}
\label{22-10-12-03}
There exists  a  Borel measurable map $\tilde{\mathbf{N}}_0: \boldsymbol{E}\to\mathbb{N}$   such that for any $z\in \boldsymbol{E}$ and $k\geq \tilde{\mathbf{N}}_0(z)$,  one has that $|\mathcal{R}_k(z)|\geq\big((\boldsymbol{r}-1)2^{\boldsymbol{c}_0}\big)^k$, where
\begin{align}
\mathcal{R}_k(z):=\left\{s\in\{1,\cdots, \boldsymbol{r}\}^{\{1,\dots, k\}}: \mu_z\left(\pi_1^{-1}z\cap\Big(\bigcap_{j=1}^kT^{-\boldsymbol{b}_j(z)}B_{s(j)}\Big)\right)>0\right\}.
\end{align}
\end{claim}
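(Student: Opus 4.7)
The plan is to lower-bound the entropy $H_{\mu_z}(\alpha_z^k)$ of the partition $\alpha_z^k := \bigvee_{j=1}^k T^{-\boldsymbol{b}_j(z)}\alpha$ under $\mu_z$, then apply the elementary inequality $|\mathcal{R}_k(z)|\geq e^{H_{\mu_z}(\alpha_z^k)}$ to read off the count. By the equivariance $T^{a}_{*}\mu_z=\mu_{S^a z}$ one checks $\mathcal{R}_k(z) = \mathcal{R}_k(S^{\tau(z)}z)$, so it suffices to produce the bound for $z \in \boldsymbol{D}$; in that case $\tau(z)=0$ and every $\boldsymbol{b}_j(z)$ is a multiple of $\boldsymbol{l}$, which lets us work entirely on the $\boldsymbol{l}$-time grid.

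For $z \in \boldsymbol{D}$ I would compare $\alpha_z^k$ with the full refinement $\beta^N := \bigvee_{i=0}^{N-1} T^{-i\boldsymbol{l}}\alpha$, where $N := 1+\boldsymbol{b}_k(z)/\boldsymbol{l}$. Since $\beta^N$ refines $\alpha_z^k$ with the complementary partition at the $N-k$ omitted times contributing at most $(N-k)\log\boldsymbol{r}$ in entropy,
$$ H_{\mu_z}(\alpha_z^k) \geq H_{\mu_z}(\beta^N) - (N-k)\log\boldsymbol{r}. $$
Expanding $H_{\mu_z}(\beta^N)$ by the chain rule and pushing forward with $T^{i\boldsymbol{l}}$, each summand becomes a past-conditional entropy under the non-invariant fiber measure $\mu_{S^{i\boldsymbol{l}}z}$, bounded below by $\tilde{h}_{S^{i\boldsymbol{l}}z} := H_{\mu_{S^{i\boldsymbol{l}}z}}(\alpha \mid \bigvee_{m\geq 1} T^{m\boldsymbol{l}}\alpha)$. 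The pointwise ergodic theorem on $(Y, S^{\boldsymbol{l}}, \nu)$ (restricted to $S^{\boldsymbol{l}}$-ergodic components if necessary), together with the identity $\int\tilde{h}_y\,d\nu(y) = h_\mu(T^{\boldsymbol{l}}, \alpha\,|\,\bar\Omega)$ that follows from the $T^{\boldsymbol{l}}$-invariance of $\mu$, converts the estimate $\delta^*(z)\geq 3\boldsymbol{c}_0$ from the previous claim into a pointwise lower bound on $\lim_N N^{-1} H_{\mu_z}(\beta^N)$. Using also the Birkhoff relation $N/k \to 1/1_{\boldsymbol{A}}^*(z)$ as $k\to\infty$, a direct arithmetic check produces
$$\tfrac{1}{k}H_{\mu_z}(\alpha_z^k)\geq \log(\boldsymbol{r}-1) + \boldsymbol{c}_0\log 2 + o(1),$$
which is precisely the desired $|\mathcal{R}_k(z)|\geq\big((\boldsymbol{r}-1)2^{\boldsymbol{c}_0}\big)^k$.

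The threshold $\tilde{\mathbf{N}}_0(z)$ is then set to be the smallest $k$ beyond which the finite-$k$ form of the inequality holds with a definite slack; Borel measurability is automatic since $H_{\mu_z}(\beta^N)$, the returns $\boldsymbol{b}_j(\cdot)$, the fiber entropies $h_{\mu_y}(T^{\boldsymbol{l}},\alpha)$ and $\tilde{h}_y$, and the relevant Birkhoff averages are all jointly Borel measurable in $z$.

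The main obstacle is reconciling the past-conditional fiber entropies $\tilde{h}_y$ produced by the chain-rule bookkeeping with the future-conditional quantity $h_{\mu_y}(T^{\boldsymbol{l}},\alpha)$ that enters the definition of $\delta$ and of $\delta^*$. Their $\nu$-averages agree thanks to the $T^{\boldsymbol{l}}$-invariance of $\mu$, but a genuine pointwise bound on $\lim_N N^{-1}H_{\mu_z}(\beta^N)$ (strong enough to give a $z$-measurable $\tilde{\mathbf{N}}_0$) requires either a careful use of the ergodic decomposition of $\nu$ under $S^{\boldsymbol{l}}$ or an appeal to the relative Shannon--McMillan--Breiman theorem, and this is the technical heart of the argument.
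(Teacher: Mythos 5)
There is a genuine gap, and it sits exactly where you flagged it, but it is not merely technical: with your choice of decomposition the missing step cannot be recovered from what Step 1 provides. Two problems compound. First, the detour through the full grid refinement $\beta^N=\bigvee_{i=0}^{N-1}T^{-i\boldsymbol{l}}\alpha$ at the price $H_{\mu_z}(\alpha_z^k)\geq H_{\mu_z}(\beta^N)-(N-k)\log\boldsymbol{r}$ is lossy: at the $N-k$ non-return grid times nothing in the construction gives a pointwise lower bound on the fiber entropies better than $0$ (the fact that $\mu_y(B_j)=1/\boldsymbol{r}$ controls $H_{\mu_y}(\alpha)$, not the conditional quantities), so the subtracted $(N-k)\log\boldsymbol{r}$ must be repaid by information you do not have along the particular orbit of $z$. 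Second, peeling $\beta^N$ from the latest time produces the past-conditional entropies $\tilde{h}_y=H_{\mu_y}(\alpha\mid\bigvee_{m\geq1}T^{m\boldsymbol{l}}\alpha)$, whereas all the pointwise control established in Step 1 (the set $\boldsymbol{D}$, the functions $\delta$, $\delta_k$, $\delta^*$, and the constants $\boldsymbol{c}_0,\boldsymbol{c}_1$) concerns the future-conditional quantities $h_{\mu_y}(T^{\boldsymbol{l}},\alpha)$. The ergodic theorem applied to $\tilde h$ only yields the conditional expectation of $\tilde h$ on the $S^{\boldsymbol{l}}$-invariant $\sigma$-algebra; since $\nu$ need not be $S^{\boldsymbol{l}}$-ergodic, this limit is not $h_{\mu}(T^{\boldsymbol{l}},\alpha|\bar\Omega)$, and neither an ergodic decomposition nor a relative Shannon--McMillan--Breiman argument ties it, for the points $z\in\boldsymbol{D}$ (or even for a.e.\ $z$), to $\delta^*(z)\geq3\boldsymbol{c}_0$: on a bad ergodic component the component average of the fiber entropy over $Y\setminus\boldsymbol{A}$ can be far below $\log\boldsymbol{r}$, and then your arithmetic $\frac1kH_{\mu_z}(\alpha_z^k)\geq\log(\boldsymbol{r}-1)+\boldsymbol{c}_0\log2+o(1)$ fails even though $\delta^*(z)\geq3\boldsymbol{c}_0$ holds. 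Note also that $\boldsymbol{D}$ and $\boldsymbol{c}_0$ are already fixed in Step 1, so you cannot simply re-engineer $\boldsymbol{D}$ to control Birkhoff averages of $\tilde h$ without pointwise information that is not available.

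The paper avoids both issues by never leaving the return times and by running the chain rule in the other direction: for $z\in\boldsymbol{E}$ it writes $H_{\mu_z}\bigl(\bigvee_{j=1}^kT^{-\boldsymbol{b}_j(z)}\alpha\bigr)$, peels off the \emph{earliest} return time and conditions on the \emph{later} ones, and pushes forward by $T^{\boldsymbol{b}_j(z)}$ using $T^i_*\mu_y=\mu_{S^iy}$. Each conditional term is then, by monotonicity of conditional entropy under refining the conditioning partition, at least $h_{\mu_{S^{\boldsymbol{b}_j(z)}z}}(T^{\boldsymbol{l}},\alpha)=\log(\boldsymbol{r}-1)+\delta(S^{\boldsymbol{b}_j(z)}z)$, so that
\begin{align*}
\log|\mathcal{R}_k(z)|\;\geq\;k\log(\boldsymbol{r}-1)+\sum_{j=0}^{\boldsymbol{a}_k(S^{\tau(z)}z)/\boldsymbol{l}}\delta\bigl(S^{\boldsymbol{l}j+\tau(z)}z\bigr),
\end{align*}
and the second sum is $\geq k\boldsymbol{c}_0$ as soon as $k\geq\check{\mathbf{N}}_0(S^{\tau(z)}z)$, where $\check{\mathbf{N}}_0$ is the (Borel) first index from which the partial averages $\delta_k$ stay above $\boldsymbol{c}_0$; setting $\tilde{\mathbf{N}}_0(z)=\check{\mathbf{N}}_0(S^{\tau(z)}z)$ gives both the bound and the measurability, with no SMB theorem, no ergodic decomposition, and no comparison with the full refinement. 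Your proposal is salvageable essentially by switching the peeling direction and discarding the $\beta^N$ comparison, i.e.\ by converging to the paper's argument.
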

\begin{proof}[Proof of \Cref{22-10-12-03}]
For any  $k\in\mathbb{N}$, define a map  $\delta_k: \boldsymbol{D}\to\mathbb{R}$ with
$$\delta_k(y):=\frac{1}{\frac{\boldsymbol{a}_k(y)}{\boldsymbol{l}}+1}\sum_{j=0}^{\boldsymbol{a}_k(y)/\boldsymbol{l}}\delta(S^{\boldsymbol{l}j}y).$$   It is clear that $\delta_k$ is  Borel measurable,  since $\boldsymbol{a}_k: \boldsymbol{D}\to\mathbb{Z}_+$ is Borel mesurable. Due to $\lim_{k\to+\infty} \delta_k(y)=\delta^*(y)\geq 3\boldsymbol{c}_0$  for any $y\in \boldsymbol{D}$,   one has that
\begin{align}
\label{22-11-12-01}
\boldsymbol{D}=\bigcup_{i=1}^{\infty}\bigcap_{k=i}^{\infty}\boldsymbol{D}_k,
\end{align}
where $\boldsymbol{D}_k:=\{y\in \boldsymbol{D}: \delta_k(y)\geq\boldsymbol{c}_0\}$.   Define a map $\mathbf{\check{N}}_0: \boldsymbol{D}\to\mathbb{N}$ such that $\mathbf{\check{N}}_0(y)$ is  the smallest positive integer $n$ satisfying $y\in\bigcap_{k=n}^{\infty}\boldsymbol{D}_k$. For any $n\in\mathbb{N}$,    it is clear that
$$\{y\in \boldsymbol{D}:  \mathbf{\check{N}}_0(y)=n\}=\left(\bigcap_{k=n}^{\infty}\boldsymbol{D}_k\right)\bigcap\left(\boldsymbol{D}\setminus\bigcap_{k=n-1}^{\infty}\boldsymbol{D}_k\right),$$
where $\boldsymbol{D}_0:=\boldsymbol{D}$.  It follows that  $\mathbf{\check{N}}_0: \boldsymbol{D}\to \mathbb{N}$ is Borel measurable.

For each $k\in\mathbb{N}$  and   $z\in \boldsymbol{E}$, one has that 
\begin{align*}
\log|\mathcal{R}_k(z)|&\geq H_{\mu_z}\left(\bigvee_{j=1}^kT^{-\boldsymbol{b}_j(z)}\alpha\right)
\\&=H_{\mu_{ S^{-\tau(z)}\circ S^{\tau(z)} z}}\left(\bigvee_{j=1}^kT^{-\boldsymbol{a}_j(S^{\tau(z)}z)-\tau(z)}\alpha\right)
\\&=H_{\mu_{S^{\tau(z)}z}}\left(\bigvee_{j=1}^kT^{-\boldsymbol{a}_j(S^{\tau(z)}(z))}\alpha\right)
\\&=H_{\mu_{S^{\tau(z)}z}}\left(T^{-\boldsymbol{a}_1(S^{\tau(z)}z)}\alpha|\bigvee_{j=2}^kT^{-\boldsymbol{a}_j(S^{\tau(z)}z)}\alpha\right)+H_{\mu_{S^{\tau(z)}z}}\left(\bigvee_{j=2}^kT^{-\boldsymbol{a}_j(S^{\tau(z)}z)}\alpha\right)
\\&=H_{\mu_{S^{\boldsymbol{b}_1(z)}(z)}}\left(\alpha|\bigvee_{j=2}^kT^{-(\boldsymbol{a}_j(S^{\tau(z)}z)-\boldsymbol{a}_1(S^{\tau(z)}z))}\alpha\right)+H_{\mu_{S^{\tau(z)}(z)}}\left(\bigvee_{j=2}^kT^{-\boldsymbol{a}_j(S^{\tau(z)}z)}\alpha\right)
\\&\geq h_{\mu_{S^{\boldsymbol{b}_1(z)}z}}(T^{\boldsymbol{l}}, \alpha)+H_{\mu_{S^{\tau(z)}(z)}}\left(\bigvee_{j=2}^kT^{-\boldsymbol{a}_j(S^{\tau(z)}z)}\alpha\right)
\\&\cdots
\\&\geq\sum_{j=1}^kh_{\mu_{S^{\boldsymbol{b}_j(z)}z}}(T^{\boldsymbol{l}},\alpha).
\end{align*}
Therefore, for any $z\in \boldsymbol{E}$  and  $k\geq\mathbf{\check{N}}_0(S^{\tau(z)}z)$, one has  that
\begin{align*}
\log|\mathcal{R}_k(z)|&\geq\sum_{j=1}^kh_{\mu_{S^{\boldsymbol{b}_j(z)}z}}(T^{\boldsymbol{l}},\alpha)
\\&=k\log(\boldsymbol{r}-1)+\sum_{j=0}^{\boldsymbol{a}_k(S^{\tau(z)}z)/\boldsymbol{l}}\big(h_{\mu_{S^{\boldsymbol{l}j+\tau(z)}z}}(T^{\boldsymbol{l}},\alpha)-\log(\boldsymbol{r}-1)\big)1_{\boldsymbol{A}}(S^{\boldsymbol{l}j+\tau(z)}z)
\\&=k\log(\boldsymbol{r}-1)+\sum_{j=0}^{\boldsymbol{a}_k(S^{\tau(z)}z)/\boldsymbol{l}}\delta(S^{\boldsymbol{l}j+\tau(z)}z)
\\&\geq k\log(\boldsymbol{r}-1)+\Big(\frac{\boldsymbol{a}_k(S^{\tau(z)}z)}{\boldsymbol{l}}+1\Big)\boldsymbol{c}_0
\\&\geq k\log(\boldsymbol{r}-1)+k\boldsymbol{c}_0.
\end{align*}
For all, the \Cref{22-10-12-03} holds by letting $\tilde{\mathbf{N}}_0: \boldsymbol{E}\to\mathbb{N}$  with $z\mapsto\mathbf{\check{N}}_0(S^{\tau(z)}z)$.
\end{proof}

\textbf{Step 3}: In this step, we mainly use the combinatorial  lemma (\Cref{22-10-10-01}) to obtain a sequence of  measurable hitting times such that the elements in  $\{B_1,\cdots, B_{\boldsymbol{r}}\}$ can hit freely along these hitting times. This is,

\begin{claim}
\label{22-10-19-05}
There exists a positive constant $\boldsymbol{b}$, a sequence $\{\tilde{\mathbf{N}}_n\}_{n\in\mathbb{N}}$ of strictly monotone increasing Borel measurable maps $\tilde{\mathbf{N}}_n: \boldsymbol{E}\to\mathbb{Z}_+$,  and  a sequence  $\{\tilde{\gamma}_n\}_{n\in\mathbb{N}}$ of Borel  measurable maps $\tilde{\gamma}_n: \boldsymbol{E}\to\{0,1\}^{\mathbb{Z}_+}$ such that for each $z\in E$ and $n\in\mathbb{N}$,
\begin{enumerate}[(a)]
\item $\hat{\tilde{\gamma}}_n(z)\subset\{0,1,\dots, \tilde{\mathbf{N}}_n(z)-1\}$ and $|\hat{\tilde{\gamma}}_n(z)|\geq \boldsymbol{b}\tilde{\mathbf{N}}_n(z)$;
\item for any $s\in\{1,\dots, \boldsymbol{r}\}^{\hat{\tilde{\gamma}}_n(z)}$, one has that
$$\mu_z\left(\pi_1^{-1}z\cap\bigcap_{j\in \hat{\tilde{\gamma}}_n(z)}T^{-j}B_{s(j)}\right)>0.$$
\end{enumerate}
\end{claim}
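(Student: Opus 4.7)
The plan is to apply the combinatorial lemma (\Cref{22-10-10-01}) with parameters $\boldsymbol{r}$ and $\boldsymbol{\lambda}=2^{\boldsymbol{c}_0}>1$ in order to extract, from the "rich" collections $\mathcal{R}_k(z)$ produced in \Cref{22-10-12-03}, combinatorial rectangles of positive $\mu_z$-measure along the return-time set $\mathfrak{b}(z)$, and then to transport everything back to $\{0,1,\dots,\tilde{\mathbf{N}}_n(z)-1\}$ via the maps $j\mapsto \boldsymbol{b}_j(z)$.

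Fix the constant $\boldsymbol{c}>0$ provided by \Cref{22-10-10-01} for these parameters. For each $k\in\mathbb{N}$ and each subset $R\subset\{1,\dots,\boldsymbol{r}\}^{\{1,\dots,k\}}$ with $|R|\geq((\boldsymbol{r}-1)2^{\boldsymbol{c}_0})^k$, select once and for all (say, lexicographically) a subset $J(k,R)\subset\{1,\dots,k\}$ with $|J(k,R)|\geq \boldsymbol{c}k$ enjoying the extension property of \Cref{22-10-10-01}. By the measurability of $z\mapsto \boldsymbol{b}_j(z)$ and of $z\mapsto \mu_z$, the map $z\mapsto \mathcal{R}_k(z)$ is Borel on $\boldsymbol{E}$ (it takes only finitely many values in the finite set $\{R\subset\{1,\dots,\boldsymbol{r}\}^{\{1,\dots,k\}}\}$), so $z\mapsto J_k(z):=J(k,\mathcal{R}_k(z))$ is Borel on the set $\{z:k\geq \tilde{\mathbf{N}}_0(z)\}$.

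Next, enlarge $\tilde{\mathbf{N}}_0$ if necessary (again in a Borel way) so that, in addition to the conclusion of \Cref{22-10-12-03}, one has $\tfrac{k}{\boldsymbol{b}_k(z)+1}\geq 2\boldsymbol{c}_1$ for every $k\geq\tilde{\mathbf{N}}_0(z)$; this is possible pointwise on $\boldsymbol{E}$ by \eqref{22-10-13-01}, and the threshold is Borel. For $n\in\mathbb{N}$ set
\[
k_n(z):=\tilde{\mathbf{N}}_0(z)+n-1,\qquad \tilde{\mathbf{N}}_n(z):=\boldsymbol{b}_{k_n(z)}(z)+1,
\]
and define $\tilde{\gamma}_n(z)\in\{0,1\}^{\mathbb{Z}_+}$ by $\widehat{\tilde{\gamma}}_n(z):=\{\boldsymbol{b}_j(z):j\in J_{k_n(z)}(z)\}$, which is a Borel measurable map into $\{0,1\}^{\mathbb{Z}_+}$ under the identification \eqref{23-1-5-1819}. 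Strict monotonicity of $n\mapsto \tilde{\mathbf{N}}_n(z)$ follows from the strict monotonicity of $k\mapsto \boldsymbol{b}_k(z)$, and by construction $\widehat{\tilde{\gamma}}_n(z)\subset\{0,\dots,\tilde{\mathbf{N}}_n(z)-1\}$. Setting $\boldsymbol{b}:=\boldsymbol{c}\cdot\boldsymbol{c}_1$, the density bound gives
\[
|\widehat{\tilde{\gamma}}_n(z)|=|J_{k_n(z)}(z)|\geq \boldsymbol{c}\,k_n(z)\geq 2\boldsymbol{c}\boldsymbol{c}_1\bigl(\boldsymbol{b}_{k_n(z)}(z)+1\bigr)\geq\boldsymbol{b}\,\tilde{\mathbf{N}}_n(z),
\]
which is (a).

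Finally, for (b), fix $s\in\{1,\dots,\boldsymbol{r}\}^{\widehat{\tilde{\gamma}}_n(z)}$ and define $s'\in\{1,\dots,\boldsymbol{r}\}^{J_{k_n(z)}(z)}$ by $s'(j):=s(\boldsymbol{b}_j(z))$. By the extension property of $J_{k_n(z)}(z)$ there is $\check{s}\in\mathcal{R}_{k_n(z)}(z)$ agreeing with $s'$ on $J_{k_n(z)}(z)$; by the definition of $\mathcal{R}_{k_n(z)}(z)$ in \Cref{22-10-12-03},
\[
\mu_z\Bigl(\pi_1^{-1}z\cap\bigcap_{j\in J_{k_n(z)}(z)}T^{-\boldsymbol{b}_j(z)}B_{s(\boldsymbol{b}_j(z))}\Bigr)\geq \mu_z\Bigl(\pi_1^{-1}z\cap\bigcap_{j=1}^{k_n(z)}T^{-\boldsymbol{b}_j(z)}B_{\check{s}(j)}\Bigr)>0,
\]
which is precisely the required positivity after reindexing by $\ell:=\boldsymbol{b}_j(z)\in\widehat{\tilde{\gamma}}_n(z)$. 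The main technical point is the joint measurability of the selection $z\mapsto J_{k_n(z)}(z)$; the rest is bookkeeping between the combinatorial indices $\{1,\dots,k\}$ and their images $\{\boldsymbol{b}_j(z)\}$ inside $\{0,\dots,\tilde{\mathbf{N}}_n(z)-1\}$, together with the density estimate needed to turn the $\boldsymbol{c}k$ lower bound into a $\boldsymbol{b}\tilde{\mathbf{N}}_n(z)$ lower bound.
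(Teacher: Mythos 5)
Your proposal is correct and follows essentially the same route as the paper: combine \Cref{22-10-12-03} with the combinatorial \Cref{22-10-10-01} (with $\boldsymbol{\lambda}=2^{\boldsymbol{c}_0}$), make the selection $J(k,\mathcal{R})$ once and for all so that measurability reduces to the measurability of $z\mapsto\mathcal{R}_k(z)$ and $z\mapsto\boldsymbol{b}_j(z)$ (exactly the consistency the paper imposes in its footnote and then verifies by decomposing over the finitely/countably many values), and transport the good indices forward via $j\mapsto\boldsymbol{b}_j(z)$. The only differences are bookkeeping: you parametrize the windows by $\tilde{\mathbf{N}}_n(z)=\boldsymbol{b}_{k_n(z)}(z)+1$ and get the density bound by enlarging the threshold so $k\geq 2\boldsymbol{c}_1(\boldsymbol{b}_k(z)+1)$, whereas the paper takes $\tilde{\mathbf{N}}_n(z)=\tilde{\mathbf{N}}_1(z)+n-1$ and uses $\check{\mathbf{N}}_n(z)=|\mathfrak{b}(z)\cap\{0,\dots,\tilde{\mathbf{N}}_n(z)-1\}|\geq\boldsymbol{c}_1\tilde{\mathbf{N}}_n(z)$; both yield the same constant-type conclusion $\boldsymbol{b}=\boldsymbol{c}\boldsymbol{c}_1$.
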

\begin{proof}[Proof of \Cref{22-10-19-05}]

 Recall that $\boldsymbol{r}$ is a positive integer  and $\boldsymbol{c}_0$ is positive constant defined in \textbf{Step 1}, and  for any $z\in E$, $\mathfrak{b}(z)$ is a subset of $\mathbb{Z}_+$ defined in \textbf{Step 2}.   Let  $\boldsymbol{c}$ be a positive constant  defined by applying   \Cref{22-10-10-01}  to $\boldsymbol{r}$ and $\boldsymbol{\lambda}:=2^{\boldsymbol{c}_0}$.   Combining \eqref{22-10-13-01} and  the similar argument in the proof  of \Cref{22-10-12-03}, there exists a   Borel measurable map $\tilde{\mathbf{N}}_1: \boldsymbol{E}\to\mathbb{Z}_+$  such that for any $z\in \boldsymbol{E}$,  one has that $\tilde{\mathbf{N}}_1(z)\geq 1/\boldsymbol{c}$,  and for each $m\geq \tilde{\mathbf{N}}_1(z)$, 
\begin{align}
\label{22-12-13-21}
|\mathfrak{b}(z)\cap\{0,1,\dots, m-1\}|\geq \boldsymbol{c}_1m>\tilde{\mathbf{N}}_0(z),
\end{align}
where $\tilde{\mathbf{N}}_0: \boldsymbol{E}\to\mathbb{Z}_+$ is the  Borel measurable map  defined in \Cref{22-10-12-03} and $\boldsymbol{c}_1$ is the positive constant defined in \eqref{22-10-10-02}.

 For each $n\in\mathbb{N}$, define a map $\tilde{\mathbf{N}}_n: \boldsymbol{E}\to\mathbb{N}$ with $\tilde{\mathbf{N}}_n(z):=\tilde{\mathbf{N}}_1(z)+n-1$.  Denoting 
 \begin{align}
\label{22-10-19-07}
\check{\mathbf{N}}_n(z):=|\mathfrak{b}(z)\cap\{0,1,\dots, \tilde{\mathbf{N}}_n(z)-1\}|,
\end{align}
then     $\check{\mathbf{N}}_n$ is  Borel measurable, since $\{\boldsymbol{b}_n\}_{n\in\mathbb{N}}$ is a sequence of  Borel measurable maps $\boldsymbol{b}_n: \boldsymbol{E}\to\mathbb{Z}_+$.  Given $n\in\mathbb{N}$ and $\mathcal{R}\subset\{1,\cdots \boldsymbol{r}\}^{\{1,2,\dots,n\}}$ with $|\mathcal{R}|\geq\big((\boldsymbol{r}-1)\boldsymbol{\lambda}\big)^n$,  applying  \Cref{22-10-10-01} to $\boldsymbol{r}$, $\boldsymbol{\lambda}$ and $\boldsymbol{c}$, we can choose a set 
\begin{align}
J(n,\mathcal{R})\subset\{1,\dots,n\}
\end{align}
with $|J(n,\mathcal{R})|\geq\boldsymbol{c}n$ such that for any $s\in\{1,\dots,\boldsymbol{r}\}^{J(n,\mathcal{R})}$, there exists $\check{s}\in\mathcal{R}$ with $s(j)=\check{s}(j)$ for each $j\in J(n,\mathcal{R})$.  By \Cref{22-10-12-03},  one has that for $z\in\boldsymbol{E}$
$$\mathcal{R}_{\check{\mathbf{N}}_n(z)}(z)\subset\{1,\cdots,\boldsymbol{r}\}^{\{1,\cdots, \check{\mathbf{N}}_n(z)\}}\quad\text{and}\quad |\mathcal{R}_{\check{\mathbf{N}}_n(z)}(z)|\geq\big((\boldsymbol{r}-1)\boldsymbol{\lambda}\big)^{\check{\mathbf{N}}_n(z)}.$$
Therefore, there exists a subset\footnote{For any $z_1\neq z_2\in\boldsymbol{E}$, if $\check{\mathbf{N}}_n(z_1)=\check{\mathbf{N}}_n(z_2)$ and  $\mathcal{R}_{\check{\mathbf{N}}_n(z_1)}(z_1)=\mathcal{R}_{\check{\mathbf{N}}_n(z_2)}(z_2)$, we  desire that $J\big(\check{\mathbf{N}}_n(z_1), \mathcal{R}_{\check{\mathbf{N}}_n(z_1)}(z_1)\big)=J\big(\check{\mathbf{N}}_n(z_2), \mathcal{R}_{\check{\mathbf{N}}_n(z_2)}(z_2)\big)$.}
\begin{align}
\label{22-12-12-2243}
J\big(\check{\mathbf{N}}_n(z), \mathcal{R}_{\check{\mathbf{N}}_n(z)}(z)\big)\subset\{1,\cdots, \check{\mathbf{N}}_n(z)\}
\end{align}
with  $|J\big(\check{\mathbf{N}}_n(z), \mathcal{R}_{\check{\mathbf{N}}_n(z)}(z)\big)|\geq \boldsymbol{c}\check{\mathbf{N}}_n(z)\geq 1$  such that  for any $s\in \{1,\dots,\boldsymbol{r}\}^{J\big(\check{\mathbf{N}}_n(z), \mathcal{R}_{\check{\mathbf{N}}_n(z)}(z)\big)}$, 
$$\mu_z\left(\pi_1^{-1}z\cap\bigcap_{j\in J\big(\check{\mathbf{N}}_n(z), \mathcal{R}_{\check{\mathbf{N}}_n(z)}(z)\big)}T^{-\boldsymbol{b}_j(z)}B_{s(j)}\right)>0.$$
According to the corresponding relationship in  \eqref{23-1-5-1819},   we can define a map  
 $$\underline{\gamma}_n: \boldsymbol{E}\to\{0,1\}^{\mathbb{Z}_+}\quad\text{with}\quad \hat{\underline{\gamma}}_n(z):=J\big(\check{\mathbf{N}}_n(z),\mathcal{R}_{\check{\mathbf{N}}_n(z)}(z)\big).$$ 

Fix $n\in\mathbb{N}$. Now, we are going to prove that $\underline{\gamma}_n: \boldsymbol{E}\to\{0,1\}^{\mathbb{Z}_+}$ is Borel measurable.  Note  that the image of  $\underline{\gamma}_n$ contains at most countable points in $\{0,1\}^{\mathbb{Z}_+}$ by \eqref{22-12-12-2243}.
Hence that, we only need to prove that for any finite subset $J$ of $\mathbb{Z}_+$,
\begin{align}
\{z\in\boldsymbol{E}: \hat{\underline{\gamma}}_n(z)=J\}
\end{align}
is  a measurable subset of $\boldsymbol{E}$. It is sufficient to prove that  any  $\check{n}\in\mathbb{N}$ and $\mathcal{R}\subset \{1,\dots,\boldsymbol{r}\}^{\{1,\dots,\check{n}\}}$,
\begin{align*}
&\{z\in E: \check{\mathbf{N}}_n(z)=\check{n}\text{ and }\mathcal{R}_{\check{\mathbf{N}}_n(z)}(z)=\mathcal{R}\}
\\=&\{z\in E: \check{\mathbf{N}}_n(z)=\check{n}\}\bigcap \{z\in E: \mathcal{R}_{\check{n}}(z)=\mathcal{R}\}
\\=&\{z\in E: \check{\mathbf{N}}_n(z)=\check{n}\}\bigcap\left(\bigcap_{s\in \mathcal{R} }\left\{z\in\boldsymbol{E}: \mu_z\Big(\bigcap_{j=1}^{\check{n}}T^{-\boldsymbol{b}_j(z)}B_{s(j)}\Big)>0\right\}\right)
\\&\quad\bigcap\left(\bigcap_{s\in\{1,\dots,\boldsymbol{r}\}^{\{1,\dots,\check{n}\}}\setminus\mathcal{R} }\left\{z\in\boldsymbol{E}: \mu_z\Big(\bigcap_{j=1}^{\check{n}}T^{-\boldsymbol{b}_j(z)}B_{s(j)}\Big)=0\right\}\right)
\end{align*}
 is a Borel measurable subset of $Y$. Since for any $s\in\mathcal{R}$,
\begin{align*}
&\left\{z\in\boldsymbol{E}: \mu_z\Big(\bigcap_{j=1}^{\check{n}}T^{-\boldsymbol{b}_j(z)}B_{s(j)}\Big)>0\right\}
\\=&\bigcup_{0\leq b_1<\cdots<b_{\check{n}}}\left(\{z\in\boldsymbol{E}: \boldsymbol{b}_j(z)=b_j \text{ for } j=1,\dots,\check{n}\}\cap \left\{z\in\boldsymbol{E}: \mu_z\Big(\bigcap_{j=1}^{\check{n}}T^{-b_j}B_{s(j)}\Big)>0\right\}\right)
\end{align*}
and for any $s\in\{1,\dots,\boldsymbol{r}\}\setminus\mathcal{R}$,
\begin{align*}
&\left\{z\in\boldsymbol{E}: \mu_z\Big(\bigcap_{j=1}^{\check{n}}T^{-\boldsymbol{b}_j(z)}B_{s(j)}\Big)=0\right\}
\\=&\bigcup_{0\leq b_1<\cdots<b_{\check{n}}}\left(\{z\in\boldsymbol{E}: \boldsymbol{b}_j(z)=b_j \text{ for } j=1,\dots,\check{n}\}\cap \left\{z\in\boldsymbol{E}: \mu_z\Big(\bigcap_{j=1}^{\check{n}}T^{-b_j}B_{s(j)}\Big)=0\right\}\right)
\end{align*}
 both  are Borel measurable subsets of $Y$, $\underline{\gamma}_n$ is Borel measurable.

For each $n\in\mathbb{N}$,  define $\tilde{\gamma}_n: \boldsymbol{E}\to\{0,1\}^{\mathbb{Z}_+}$ with
$$\hat{\tilde{\gamma}}_n(z)=\{\boldsymbol{b}_j(z): j\in \hat{\underline{\gamma}}_n(z)\}.$$
Given $\check{l}\in\mathbb{N}$ and  a  finite subset $\check{J}=\{\check{j}_1<\check{j}_2<\dots \check{j}_{\check{l}}\}$ of $\mathbb{N}$, one has that
\begin{align*}
&\{z\in\boldsymbol{E}: \hat{\tilde{\gamma}}_n(z)=\check{J}\}
\\=&\bigcup_{0\leq j_1<\cdots<j_{\check{l}}}\left(\left\{z\in\boldsymbol{E}: \hat{\underline{\gamma}}_n(z)=\{j_1,j_2,\dots, j_{\check{l}}\}\right\}\cap \{z\in\boldsymbol{E}: \boldsymbol{b}_{j_i}(z)=\check{j}_{i}: i=1,\dots,\check{l}\}\right)
\end{align*}
is a Borel measurable subset of $Y$. It follows  that $\tilde{\gamma}_n$ is Borel measurable.  Note that for any $z\in\boldsymbol{E}$,
\begin{align*}
\hat{\tilde{\gamma}}_n(z)&=\{\boldsymbol{b}_j(z): j\in \hat{\underline{\gamma}}_n(z)\}
\\&=\{\boldsymbol{b}_j(z): j\in J\big(\check{\mathbf{N}}_n(z),\mathcal{R}_{\check{\mathbf{N}}_n(z)}(z)\big)\}
\\&\overset{\eqref{22-12-12-2243}}{\subset} \{\boldsymbol{b}_1(z),\dots, \boldsymbol{b}_{\check{\mathbf{N}}_n(z)}(z)\}.
\end{align*}
For all,  one has that
 \begin{align*}
  |\hat{\tilde{\gamma}}_n(z)|=|\underline{\hat{\gamma}}_n(z)|&\geq \boldsymbol{c}\check{\mathbf{N}}_n(z)
  \\&\overset{\eqref{22-10-19-07}}{=}\boldsymbol{c}|\mathfrak{b}(z)\cap\{0,1,\dots, \tilde{\mathbf{N}}_n(z)-1\}|
  \\&\overset{\eqref{22-12-13-21}}{\geq} \boldsymbol{c}\boldsymbol{c}_1\tilde{\mathbf{N}}_n(z)=:\boldsymbol{b}\tilde{\mathbf{N}}_n(z),
  \end{align*}  and for each $s\in\{1,\dots, \boldsymbol{r}\}^{\hat{\tilde{\gamma}}_n(z)}$
$$\mu_z\left(\pi_1^{-1}z\cap\bigcap_{j\in \hat{\gamma}_n(z)}T^{-j}B_{s(j)}\right)>0.$$
   This completes the proof of  \Cref{22-10-19-05}.
\end{proof}
\textbf{Step 4:}  In this step, we borrow   \Cref{22-10-19-05} to complete the proof of  \Cref{22-12-13-35}. Note that
$$\pi_2: (Y,\mathscr{Y},\nu, S)\to(\bar{\Omega}, \bar{\mathscr{F}},\bar{\mathbb{P}},\bar{\theta})$$
is a factor map between two measure-preserving dynamical systems on Polish probability spaces.  Since $\nu(\boldsymbol{E})=1$ and $\pi_2$ is a Borel measurable map, there exists $\bar{\Omega}_1\in\bar{\mathscr{F}}$ satisfying $\bar{\mathbb{P}}(\bar{\Omega}_1)=1$ and $\pi_2^{-1}(\bar{\omega})\cap \boldsymbol{E}\neq\varnothing$ for each
 $\bar{\omega}\in\bar{\Omega}_1$ (for example, see \cite[Theorem 2.9]{G}). Together with \Cref{22-10-19-06}, there exists a  measurable map $\hat{\pi}_2: \bar{\Omega}\to Y$ and $\mathbb{P}$-full measure $\bar{\Omega}_2\in\mathscr{F}$ such that each $\bar{\omega}\in \bar{\Omega}_2$,
\begin{align}
\hat{\pi}_2(\bar{\omega})\in \boldsymbol{E}\quad\text{ and }\pi_2\circ\hat{\pi}_2(\bar{\omega})=\bar{\omega}.
\end{align}

According to  \Cref{22-10-19-05} in \textbf{Step 3},   there exists a positive constant $\boldsymbol{b}$,  a sequence $\{\mathbf{N}_n\}_{n\in\mathbb{N}}$ of strictly monotone increasing Borel measurable maps $\mathbf{N}_n: \bar{\Omega}_2\to\mathbb{N}$ given by $\tilde{\mathbf{N}}_n\circ\hat{\pi}_2$  and  a sequence $\{\gamma_n\}_{n\in\mathbb{N}}$ of  Borel measurable maps $\gamma_n: \bar{\Omega}_2\to\{0,1\}^{\mathbb{Z}_+}$ given by $\tilde{\gamma}_n\circ\hat{\pi}_2$  such that  for any $\bar{\omega}\in\bar{\Omega}_2$
\begin{enumerate}[(a)]
\item $|\hat{\gamma}_n(\bar{\omega})|\geq \boldsymbol{b}\mathbf{N}_n(\bar{\omega})$;
\item\label{22-10-20-01} for any $s\in\{1, 2\}^{\hat{\gamma}_n(\bar{\omega})}$, one has that
$$\mu_{\hat{\pi}_2(\bar{\omega})}\left(\pi^{-1}_1\Big(\hat{\pi}_2(\bar{\omega})\Big)\cap\bigcap_{j\in \hat{\gamma}_n(\bar{\omega})}T^{-j}B_{s(j)}\right)>0.$$
\end{enumerate}
 It follows that  for any $n\in\mathbb{N}$  and $\bar{\omega}\in\bar{\Omega}_2$ and $s\in \{1, 2\}^{\hat{\gamma}_n(\bar{\omega})}$, there exists
 \begin{align}
 \label{22-12-13-32}
 (\bar{\omega}_s, x_s)\in \pi^{-1}_1\Big(\hat{\pi}_2(\bar{\omega})\Big)\cap \bigcap_{j\in\hat{\gamma}_n(\bar{\omega}) }T^{-j}B_{s(j)}.
 \end{align}
 Noting that  $\pi_2\circ\hat{\pi}_2(\bar{\omega})=\bar{\omega}_s\in\bar{\Omega}$, then $\bar{\omega}=\bar{\omega}_s$. It follows that $\pi_1(\bar{\omega}, x_s)=\hat{\pi}_2(\bar{\omega})\in\boldsymbol{E}$ and    $S^j(\pi_1(\bar{\omega}, x_s))\in \boldsymbol{A}$ for any $j\in\hat{\gamma}_n(\bar{\omega})=\hat{\tilde{\gamma}}_n(\hat{\pi}_2(\bar{\omega}))$. Therefore, 
$$ (\bar{\theta}^j\bar{\omega}, \Phi_{\bar{\omega}}^j(x_s))=T^j(\bar{\omega}, x_s)\in B_{s(j)}\cap \pi_1^{-1}\boldsymbol{A}\subset\bar{\Omega}\times U_{s(j)},$$  
which implies that  $ \Phi_{\bar{\omega}}^j(x_s)\in U_{s(j)}$ for any $j\in\hat{\gamma}_n(\bar{\omega})$.  This finishes the proof of \Cref{22-12-13-35}.
\end{proof}

\section{Proof of  Theorem \ref{23-1-13-0019}}
\label{22-11-03-02}
Under the setting of \Cref{weakhoreshoes},  the collection of  the hitting freedom at each fibers  induces  a product system.  And, we will view the  induced system as a trivial  RDS. By  Krylov-Bogolyubov theorem in RDS, we  prove that GSNS has  full-horseshoes.

\subsection{Krylov–Bogolyubov  theorem in RDSs}

 In this subsection, we mainly review the  narrow topology of probability measures and Krylov–Bogolyubov  theorem on an  invariant  random compact set for a continuous RDS.  For convenience,  we still assume  $M$ is a compact metric space,  $\mathscr{B}_M$ is the Borel $\sigma$-algebra of $M$,   $(\Omega,\mathscr{F},\mathbb{P},\theta)$ is measure-preserving dynamical system on the Polish probability space, 
  and $\mathscr{F}_{\mathbb{P}}$ is the completion of $\mathscr{F}$ with respect to $\mathbb{P}$ through this subsection.

 Denote $C_{\Omega}(M)$  as the collections of  functions $h:\Omega\times M\to \mathbb{R}$ satisfying
 \begin{enumerate}
 \item\label{23-3-8-0014} for all $x\in M$, $\omega\mapsto h(\omega, x)$  is measurable from $(\Omega, \mathscr{F}_{\mathbb{P}})$ to $(\mathbb{R},\mathscr{B}_{\mathbb{R}})$;
 \item\label{23-3-8-0015} for all $\omega\in\Omega$, $x\mapsto h(\omega, x)$ is continuous;
 \item  $\int_{\Omega}\sup_{x\in M}|h(\omega,x)|d\mathbb{P}(\omega)<+\infty$.
 \end{enumerate}
It is pointed out that  if the mapping  $h: \Omega\times M\to \mathbb{R}$ satisfies  \eqref{23-3-8-0014} and \eqref{23-3-8-0015}, then $h $ is measurable from $(\Omega\times M, \mathscr{F}_{\mathbb{P}}\otimes\mathscr{B}_M)$ to $(\mathbb{R},\mathscr{B}_{\mathbb{R}})$ (for example, see \cite[Lemma 1.1]{C}).    Recall that  $\mathcal{P}_{\mathbb{P}}(\Omega\times M)$  is  the space of probability measures on $(\Omega\times M, \mathscr{F}_{\mathbb{P}}\otimes\mathscr{B}_M)$ with the marginal $\mathbb{P}$. The  \emph{narrow topology} of $\mathcal{P}_{\mathbb{P}}(\Omega\times M)$ is defined by the topology  basis which  is given by the collection of all sets of the form
$$U_{h_1,\dots, h_n}(\check{\nu},\delta)=\{\nu\in \mathcal{P}_{\mathbb{P}}(\Omega\times M): |\int_{\Omega\times M}h_k\mathrm{d}\check{\nu}-\int_{\Omega\times M}h_k\mathrm{d}\nu|<\delta, k=1,\dots,n\},$$
where $n\in\mathbb{N}, h_1,\dots,h_n\in C_{\Omega}(M)$, $\check{\nu}\in \mathcal{P}_{\mathbb{P}}(\Omega\times M)$ and $\delta>0$.  

  A  \emph{random compact set} $K$ of  $M$ on the measurable space $(\Omega,\mathscr{F}_{\mathbb{P}})$ is a set-valued map from $\Omega$ to  $2^M$, the collection of all subsets of $M$,  with $\omega\mapsto K(\omega)$ satisfying that
\begin{enumerate}[(i)]
\item\label{22-10-23-01-01} $K(\omega)$ is a non-empty compact subset for any $\omega\in\Omega$;
\item\label{22-10-23-01-02}  for any $x\in M$, $\omega\mapsto d_M(x, K(\omega))$ is a measurable map $(\Omega, \mathscr{F}_{\mathbb{P}})$ to $(\mathbb{R},\mathscr{B}_{\mathbb{R}})$, where $d_M$ is a compatible  metric on $M$.
\end{enumerate}

 Next, we review  two lemmas about  Portmenteau theorem in RDSs and the equivalent characterization of a random compact set, respectively.

 \begin{lemma}[{\cite[Theorem 3.17]{C}}]
 \label{22-12-14-309}
 Let $\{\tilde{\nu}_n\}_{n\in\mathbb{N}}$ be a sequence of  $\mathcal{P}_{\mathbb{P}}(\Omega\times M)$. Then   $\{\tilde{\nu}_n\}_{n\in\mathbb{N}}$ converges to  a measure $\tilde{\nu}\in \mathcal{P}_{\mathbb{P}}(\Omega\times M) $ in the narrow topology if and only if  
 $$\limsup_{n\to+\infty}\tilde{\nu}_n(K)\leq\tilde{\nu}(K)$$
  for any random compact set $K$ of $M$ on $(\Omega,\mathscr{F}_{\mathbb{P}})$.
 \end{lemma}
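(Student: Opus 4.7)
The plan is to prove both directions by adapting the classical Portmanteau theorem, replacing closed subsets of a Polish space by random compact sets and replacing continuous bounded test functions by elements of $C_{\Omega}(M)$ (Carathéodory integrands). Throughout, I identify a random compact set $K$ with its graph $\{(\omega,x)\in\Omega\times M:x\in K(\omega)\}$, which is a measurable subset of $\Omega\times M$.

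For the forward direction, assume $\tilde{\nu}_n\to\tilde{\nu}$ narrowly and let $K$ be a random compact set. I would approximate the indicator of the graph of $K$ from above by the Carathéodory functions
\[
h_j(\omega,x):=\max\{0,\,1-j\,d_M(x,K(\omega))\}, \qquad j\in\mathbb{N}.
\]
Each $h_j$ is continuous in $x$ (composition of continuous functions on the compact $M$), measurable in $\omega$ by condition (ii) of the definition of a random compact set, and bounded by $1$, so $h_j\in C_{\Omega}(M)$. Since $\tilde{\nu}_n(K)\leq \int h_j\,d\tilde{\nu}_n$, narrow convergence gives $\limsup_n\tilde{\nu}_n(K)\leq \int h_j\,d\tilde{\nu}$ for every $j$, and monotone convergence as $j\to\infty$ (using compactness of $K(\omega)$ so that $h_j(\omega,\cdot)\downarrow 1_{K(\omega)}$) delivers the desired inequality.

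For the reverse direction, by splitting $h=h^+-h^-$ and truncating at heights $C_k\uparrow\infty$ (controlled uniformly in $n$ by the hypothesis $\int\sup_x|h(\omega,x)|\,\mathrm{d}\mathbb{P}<\infty$), it suffices to handle $h\in C_{\Omega}(M)$ with $0\leq h\leq C$ for a constant $C$. The key geometric observation is that the upper level sets
\[
L_t(\omega):=\{x\in M:h(\omega,x)\geq t\}
\]
are compact, since $h(\omega,\cdot)$ is continuous on the compact space $M$, and $\omega\mapsto L_t$ is a random compact set (measurability of $\omega\mapsto d_M(x,L_t(\omega))$ follows from standard projection/Castaing-representation arguments applied to the Carathéodory integrand $h$). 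The layer cake formula gives $\int h\,d\tilde{\nu}_n=\int_0^C\tilde{\nu}_n(L_t)\,dt$ and the analogous identity for $\tilde{\nu}$, so a reverse Fatou lemma on the bounded interval $[0,C]$ combined with the hypothesis yields
\[
\limsup_n\int h\,d\tilde{\nu}_n\leq\int_0^C\limsup_n\tilde{\nu}_n(L_t)\,dt\leq\int_0^C\tilde{\nu}(L_t)\,dt=\int h\,d\tilde{\nu}.
\]
Applying exactly the same reasoning to $C-h\in C_{\Omega}(M)$, whose upper level sets are again random compact sets, produces the matching $\liminf$ inequality, and narrow convergence follows.

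The step I expect to be most delicate is verifying that the family $\{L_t\}_{t\in[0,C]}$ really is a family of random compact sets in the sense of condition (ii): compactness of each fibre is immediate, but $\omega$-measurability of $d_M(x,L_t(\omega))$ for each $x\in M$ needs a careful argument, most cleanly via the observation that for each open $U\subset M$ the set $\{\omega:L_t(\omega)\cap U\neq\varnothing\}=\{\omega:\exists x\in U,\,h(\omega,x)\geq t\}$ can be written as a countable union using a dense sequence in $U$ and the continuity of $h(\omega,\cdot)$, thereby placing it in $\mathscr{F}_{\mathbb{P}}$. A secondary technical point is the removal of the boundedness of $h$, which I would manage by the truncation argument above, using uniform integrability of $\sup_x|h(\omega,x)|$ in $L^1(\mathbb{P})$ to control the tails uniformly in $n$.
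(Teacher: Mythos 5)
The paper itself does not prove \Cref{22-12-14-309}; it is quoted verbatim from Crauel's book, so your attempt is effectively reconstructing the standard proof of that Portmanteau-type theorem. Your outline is indeed the classical scheme transplanted to the fibered setting, and the forward direction is complete and correct: $h_j(\omega,x)=\max\{0,1-j\,d_M(x,K(\omega))\}$ lies in $C_{\Omega}(M)$ precisely because of condition (ii) in the definition of a random compact set, dominates $1_K$, and decreases to it, so narrow convergence plus dominated convergence gives the inequality. The truncation step in the reverse direction is also legitimate, though the real reason it is uniform in $n$ is not uniform integrability per se but the fact that every $\tilde{\nu}_n$ and $\tilde{\nu}$ has $\Omega$-marginal $\mathbb{P}$, so $\int(|h|-C)^+\,\mathrm{d}\tilde{\nu}_n\leq\int_{\Omega}\big(\sup_x|h(\omega,x)|-C\big)^+\,\mathrm{d}\mathbb{P}$ independently of $n$.

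There are, however, two concrete gaps in the reverse direction. First, the definition of a random compact set used in this paper requires $K(\omega)\neq\varnothing$ for \emph{every} $\omega$, whereas your level sets $L_t(\omega)=\{x:h(\omega,x)\geq t\}$ (and likewise those of $C-h$) may have empty fibres, so the hypothesis $\limsup_n\tilde{\nu}_n(K)\leq\tilde{\nu}(K)$ cannot be applied to $L_t$ as it stands. The repair is cheap but must be made: on the measurable set $\{\omega:\max_{x\in M}h(\omega,x)<t\}$ replace the fibre by all of $M$; since all the measures share the marginal $\mathbb{P}$, this adds the same constant to $\tilde{\nu}_n(L_t)$ and to $\tilde{\nu}(L_t)$, so the inequality for the modified set yields it for $L_t$. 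Second, your ``cleanest'' measurability argument is incorrect as stated: for open $U$ the set $\{\omega:\exists x\in U,\ h(\omega,x)\geq t\}$ is \emph{not} captured by a dense sequence $\{x_i\}\subset U$ via countable unions or suprema, because the non-strict level may be attained only on the boundary of $U$ (take $M=[0,1]$, $U=(0,1)$, $h(x)=1-x$, $t=1$: the level set is $\{0\}$, which misses $U$, yet $\sup_i h(x_i)=1\geq t$). The clean route, entirely inside this paper's toolkit, is to note that the graph $\{(\omega,x):h(\omega,x)\geq t\}$ is product measurable with compact fibres and to invoke \Cref{22-11-1-01} (equivalently, the measurable projection theorem on the complete space $(\Omega,\mathscr{F}_{\mathbb{P}},\mathbb{P})$), combined with the non-emptiness fix above. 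With those two repairs your layer-cake plus reverse-Fatou argument, applied to $h$ and to $C-h$, does prove the lemma.
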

  
\begin{lemma}[{\cite[Proposition 1.6.2 and Proposition 1.6.3]{A}}]
\label{22-11-1-01}
 Let $K: \Omega\to 2^M$ be a set-valued map taking values in the subspace of non-empty compact subsets of  $M$.  Then the following statements are equivalent:
 \begin{enumerate}[(a)]
  \item $ K\in\mathscr{F}_{\mathbb{P}}\otimes \mathscr{B}_M$;
  \item $K$ is a random compact set of $M$ on $(\Omega,\mathscr{F}_{\mathbb{P}})$;
  \end{enumerate} 
\end{lemma}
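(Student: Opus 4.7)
The plan is to prove the two implications separately, leveraging that $M$ is compact metric (hence Polish and separable) and that $\mathscr{F}_{\mathbb{P}}$ is the $\mathbb{P}$-completion of $\mathscr{F}$ on a Polish probability space. Set $f(\omega,x) := d_M(x, K(\omega))$. Since each fiber $K(\omega)$ is compact, hence closed, one has the identification
\[
K = \{(\omega,x) \in \Omega \times M : f(\omega,x) = 0\}.
\]
So the two directions reduce to transferring measurability between the set $K$ (viewed as a subset of the product) and the family of slice-distance functions $\omega \mapsto f(\omega,x)$.

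For (b) $\Rightarrow$ (a), it suffices to show that $f$ is jointly $\mathscr{F}_{\mathbb{P}} \otimes \mathscr{B}_M$-measurable, because then $K = f^{-1}(\{0\})$ lies in the product $\sigma$-algebra. By hypothesis $\omega \mapsto f(\omega,x)$ is measurable for each fixed $x$, and $x \mapsto f(\omega,x)$ is $1$-Lipschitz (continuous) for each fixed $\omega$. Thus $f$ is a Carath\'eodory function on $\Omega \times M$ with $M$ separable, and I would establish joint measurability by the standard simple-function approximation: fix a countable dense $\{x_n\}_{n \in \mathbb{N}} \subset M$, construct for each $k \in \mathbb{N}$ a Borel partition $\{B_{k,n}\}_{n \in \mathbb{N}}$ of $M$ of mesh at most $1/k$ with $x_n \in B_{k,n}$, and set
\[
f_k(\omega,x) := \sum_{n \in \mathbb{N}} f(\omega, x_n)\, 1_{B_{k,n}}(x).
\]
Each $f_k$ is jointly measurable (countable sum of products of a measurable function in $\omega$ and a Borel indicator in $x$), and by uniform continuity of $f(\omega, \cdot)$ on the compact $M$, $f_k \to f$ pointwise. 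Hence $f$ is jointly measurable.

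For (a) $\Rightarrow$ (b), I would fix $x \in M$ and show $\omega \mapsto d_M(x, K(\omega))$ is $\mathscr{F}_{\mathbb{P}}$-measurable by identifying its open sublevel sets. For any $r > 0$, the compactness of $K(\omega)$ gives
\[
\{\omega \in \Omega : d_M(x, K(\omega)) < r\} = \{\omega \in \Omega : K(\omega) \cap B(x,r) \neq \varnothing\} = \mathrm{pr}_{\Omega}\bigl(K \cap (\Omega \times B(x,r))\bigr),
\]
where $\mathrm{pr}_{\Omega} : \Omega \times M \to \Omega$ is the canonical projection and $B(x,r)$ is the open $r$-ball in $M$. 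The right-hand side is the projection of an element of $\mathscr{F}_{\mathbb{P}} \otimes \mathscr{B}_M$. Since $\Omega$ is Polish, $M$ is Polish, and $\mathscr{F}_{\mathbb{P}}$ is complete with respect to $\mathbb{P}$, the measurable projection theorem (a corollary of the Lusin–Choquet theorem on analytic sets, as in Castaing–Valadier or Arnold) asserts that such projections lie in $\mathscr{F}_{\mathbb{P}}$. This proves measurability of all open sublevel sets, hence of $\omega \mapsto d_M(x, K(\omega))$.

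The main obstacle is the reverse implication, where the measurable projection theorem is essential and genuinely requires both the Polish structure on $\Omega$ (available from the standing hypothesis of the paper) and the completion passage from $\mathscr{F}$ to $\mathscr{F}_{\mathbb{P}}$: without completion, the projection of a Borel set would only be analytic and might fail to be Borel. The forward direction, by contrast, is an elementary Carath\'eodory-type approximation argument that uses nothing beyond compactness of the fibers and separability of $M$.
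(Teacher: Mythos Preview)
Your proof is correct and follows the standard route. Note, however, that the paper does not supply its own proof of this lemma: it is stated with a bare citation to Arnold's monograph \cite[Propositions~1.6.2 and~1.6.3]{A} and used as a black box. Your argument is essentially the one given there, so there is nothing substantive to compare.

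One minor remark: in the direction (a) $\Rightarrow$ (b) you invoke the Polish structure on $\Omega$ when applying the measurable projection theorem, but this is not needed. The projection theorem only requires that $(\Omega,\mathscr{F}_{\mathbb{P}},\mathbb{P})$ be a \emph{complete} probability space and that $M$ be Polish; no topological assumption on $\Omega$ enters. Completeness alone ensures that the projection of an $\mathscr{F}_{\mathbb{P}}\otimes\mathscr{B}_M$-set (which is analytic over $\mathscr{F}_{\mathbb{P}}$) lands back in $\mathscr{F}_{\mathbb{P}}$. This is why the lemma is formulated with $\mathscr{F}_{\mathbb{P}}$ rather than $\mathscr{F}$.
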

  Let $F$ be a continuous RDS on $M$ over  $(\Omega,\mathscr{F}_{\mathbb{P}},\mathbb{P},\theta)$.   A random compact set $K$ of $M$ on $(\Omega,\mathscr{F}_{\mathbb{P}})$ is said to be $F$-forward invariant if  $F^n_{\omega}\big(K(\omega)\big)\subset K(\theta^n\omega)$ for  any $n\in\mathbb{N}$  and  $\omega\in\Omega$.  According to \cite[Corollary 6.13]{C}, one has the Krylov–Bogolyubov  theorem:
 \begin{lemma}
\label{22-10-12-01}
 Let $F$ be a continuous RDS on $M$ over  $(\Omega,\mathscr{F}_{\mathbb{P}},\mathbb{P},\theta)$ and   $K$ be a $F$-forward invariant random compact  set of $M$ on $(\Omega,\mathscr{F}_{\mathbb{P}})$.  Assume that $\{\gamma_n\}_{n\in\mathbb{N}}$ is a sequence of measurable maps $\gamma_n: (\Omega,\mathscr{F}_{\mathbb{P}})\to (M,\mathscr{B}_M)$ satisfying $\gamma_n(\omega)\in K(\omega)$ for $\mathbb{P}$-a.s. $\omega\in\Omega$,  and $\{\mathbf{N}_n\}_{n\in\mathbb{N}}$  is a sequence of  strictly increasing measurable maps $\mathbf{N}_n:(\Omega,\mathscr{F}_{\mathbb{P}})\to(\mathbb{Z}_+, \mathscr{B}_{\mathbb{Z}_+})$.   For a sequence of probability measures $\{\tilde{\nu}_n\}_{n\in\mathbb{N}}$ on $(\Omega\times M,\mathscr{F}_{\mathbb{P}}\otimes\mathscr{B}_M)$  defined as 
$$\tilde{\nu}_n=\int_{\Omega}\frac{1}{\mathbf{N}_n(\omega)}\sum_{i=0}^{\mathbf{N}_n(\omega)-1}\delta_{\big(\theta^i\omega,F^i_{\omega}\gamma_n(\omega)\big)}\mathrm{d}\mathbb{P}(\omega),$$
there exists a strictly increasing sequence $\{n_k\}_{k\in\mathbb{N}}$ of $\mathbb{N}$  such that $\tilde{\nu}:=\lim_{k\to+\infty}\tilde{\nu}_{n_k}$ is an invariant measure of the RDS $F$,  and $\tilde{\nu}(K)=1$.
\end{lemma}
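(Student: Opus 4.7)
The plan is to execute the classical Krylov--Bogolyubov averaging scheme transported to the random dynamical setting: I would view $\{\tilde{\nu}_n\}$ as time-averaged empirical measures along the random orbits $\{F^i_\omega\gamma_n(\omega)\}$, extract a narrowly convergent subsequence using the forward-invariant random compact set $K$ for tightness, and show the limit is $T$-invariant with $\Omega$-marginal $\mathbb{P}$. The crucial quantitative input is that each $\mathbf{N}_n$ is strictly increasing, hence $\mathbf{N}_n(\omega)\to+\infty$ for $\mathbb{P}$-a.s.\ $\omega$, enabling a Birkhoff-type argument for the limit.

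First, the forward invariance $F^i_\omega(K(\omega))\subset K(\theta^i\omega)$ together with $\gamma_n(\omega)\in K(\omega)$ gives $F^i_\omega\gamma_n(\omega)\in K(\theta^i\omega)$ for all $0\leq i<\mathbf{N}_n(\omega)$. Thus every $\tilde{\nu}_n$ is supported on the graph $\widetilde{K}:=\{(\omega',x)\in\Omega\times M:x\in K(\omega')\}$, which by \Cref{22-11-1-01} belongs to $\mathscr{F}_{\mathbb{P}}\otimes\mathscr{B}_M$. This concentration on a random compact set supplies tightness in the narrow topology on $\mathcal{P}_{\mathbb{P}}(\Omega\times M)$; a Prokhorov-type extraction then delivers a subsequence $\tilde{\nu}_{n_k}\to\tilde{\nu}$ narrowly, and \Cref{22-12-14-309} applied to $K$ itself yields $\tilde{\nu}(K)\geq\limsup_k\tilde{\nu}_{n_k}(K)=1$.

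To identify the marginal, I would first test against $h=h(\omega)\in L^1(\mathbb{P})$ viewed as a (constant-in-$x$) element of $C_\Omega(M)$, observing that
$$\int h\,d\tilde{\nu}_n=\int_\Omega\frac{1}{\mathbf{N}_n(\omega)}\sum_{i=0}^{\mathbf{N}_n(\omega)-1}h(\theta^i\omega)\,d\mathbb{P}(\omega),$$
which tends to $\int h\,d\mathbb{P}$ by Birkhoff's theorem for $(\Omega,\mathscr{F},\mathbb{P},\theta)$ and dominated convergence, giving $(\pi_\Omega)_*\tilde{\nu}=\mathbb{P}$. For $T$-invariance, with $T(\omega,x):=(\theta\omega,F^1_\omega x)$ and arbitrary $h\in C_\Omega(M)$, a telescoping computation produces
$$\int(h\circ T-h)\,d\tilde{\nu}_n=\int_\Omega\frac{h(\theta^{\mathbf{N}_n(\omega)}\omega,F^{\mathbf{N}_n(\omega)}_\omega\gamma_n(\omega))-h(\omega,\gamma_n(\omega))}{\mathbf{N}_n(\omega)}\,d\mathbb{P}(\omega),$$
whose integrand is bounded in modulus by $2\sup_x|h(\cdot,x)|/\mathbf{N}_n(\omega)$ (integrable after absorbing the $\theta^{\mathbf{N}_n}$ shift via $\theta$-invariance of $\mathbb{P}$) and tends to $0$ pointwise, so dominated convergence kills the difference. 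Since $h\circ T\in C_\Omega(M)$ by continuity of $F^1_\omega$, passing $\tilde{\nu}_{n_k}\to\tilde{\nu}$ narrowly in both integrals yields $\int h\,d\tilde{\nu}=\int h\circ T\,d\tilde{\nu}$, i.e.\ $T_*\tilde{\nu}=\tilde{\nu}$.

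The main obstacle I anticipate is the tightness/compactness step in $\mathcal{P}_{\mathbb{P}}(\Omega\times M)$: the narrow topology is a fibered Prokhorov-type object, and extracting a convergent subsequence requires care beyond the classical Polish-space version. The role of $\widetilde{K}$ as a genuine random compact set (verified through \Cref{22-11-1-01}) is precisely to serve as the compact exhaustion enabling this extraction. A secondary subtlety is that the marginals $(\pi_\Omega)_*\tilde{\nu}_n$ need not equal $\mathbb{P}$ for finite $n$; the equality emerges only after passing to the limit via the Birkhoff computation above.
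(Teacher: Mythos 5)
Your plan is the classical Krylov--Bogolyubov scheme, which is also what underlies the paper's treatment (the paper gives no proof of \Cref{22-10-12-01} beyond citing Crauel's Corollary 6.13), and the easy parts are right: forward invariance of $K$ puts every atom $(\theta^i\omega,F^i_\omega\gamma_n(\omega))$ inside $K$, and the telescoping identity you write for $\int(h\circ T-h)\,d\tilde{\nu}_n$ is correct, with $\mathbf{N}_n(\omega)\geq \mathbf{N}_1(\omega)+n-1\to\infty$ doing the work. The genuine gap is the extraction step, which you leave as ``a Prokhorov-type extraction'': in this fibered setting that extraction is precisely the nontrivial content of the cited result, and it cannot be invoked off the shelf for your sequence. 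As you yourself observe, $(\pi_\Omega)_*\tilde{\nu}_n\neq\mathbb{P}$ in general (because $\mathbf{N}_n$ is $\omega$-dependent), so the $\tilde{\nu}_n$ do \emph{not} lie in $\mathcal{P}_{\mathbb{P}}(\Omega\times M)$, which is the space on which the paper's narrow topology, the Portmanteau lemma (\Cref{22-12-14-309}) and Crauel's tightness/compactness theory for measures supported on a random compact set are formulated. Worse, for an unbounded $h\in C_\Omega(M)$ the pairing $\int h\,d\tilde{\nu}_n$ need not even be finite or well behaved, since the random times $\mathbf{N}_n(\omega)$ can chase the large values of $g(\omega):=\sup_{x\in M}|h(\omega,x)|\in L^1(\mathbb{P})$; so ``narrow convergence of a subsequence of the raw empirical measures'' in the stated topology is not available by a generic compactness argument. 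To close the gap you must either work with the weaker topology generated by the \emph{bounded} elements of $C_\Omega(M)$ (these are measure-determining on $\mathscr{F}_{\mathbb{P}}\otimes\mathscr{B}_M$), prove sequential compactness there for measures carried by $K$ with $\Omega$-marginal absolutely continuous with respect to $\mathbb{P}$, and then verify that the limit has marginal exactly $\mathbb{P}$ and satisfies the Portmanteau inequality for $K$; or first replace $\tilde{\nu}_n$ by genuine elements of $\mathcal{P}_{\mathbb{P}}(\Omega\times M)$ (e.g.\ via disintegration over $(\pi_\Omega)_*\tilde{\nu}_n\ll\mathbb{P}$) and control the error. None of this is routine, and it is the heart of the lemma.

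A second, more local error: in the invariance step you justify dominated convergence by ``absorbing the $\theta^{\mathbf{N}_n}$ shift via $\theta$-invariance of $\mathbb{P}$''. Since the shift amount is random, $\omega\mapsto\theta^{\mathbf{N}_n(\omega)}\omega$ is not measure-preserving, and one can construct admissible strictly increasing $\mathbf{N}_n$ and $g\in L^1(\mathbb{P})$ for which $\int g(\theta^{\mathbf{N}_n(\omega)}\omega)/\mathbf{N}_n(\omega)\,\mathrm{d}\mathbb{P}(\omega)$ fails to tend to $0$ (it can even be infinite), so your telescoping error does not vanish for general $h\in C_\Omega(M)$. The repair is easy but should be stated: test invariance (and the marginal identification via Birkhoff, where the same domination issue appears along the random index $\mathbf{N}_n(\omega)$) only against bounded $h\in C_\Omega(M)$, which suffices to conclude $T_*\tilde{\nu}=\tilde{\nu}$ and $(\pi_\Omega)_*\tilde{\nu}=\mathbb{P}$; this also meshes with the corrected convergence framework above, where only bounded test functions are available.
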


 \subsection{Proof of  Theorem \ref{23-1-13-0019}}
\label{22-12-14-126}
In this subsection, we give the final proof of  Theorem \ref{23-1-13-0019}.    Recall that the  one to one corresponding relationship between the element in $\{0,1\}^{\mathbb{Z}_+}$ and subset of $\mathbb{Z}_+$ is  defined as 
 \begin{align*}
 u\in\{0,1\}^{\mathbb{Z}_+}\mapsto \hat{u}=\{n\in\mathbb{Z}_+: u(n)=1\}.
 \end{align*}
Now,  we give a general result to  guarantee the existence of full-horseshoes on two disjoint closed balls.   \begin{thm}
\label{23-10-10-2137}
If stochastic flow $\Phi$ on $\mathbb{R}^d$ over $(\Omega,\mathscr{F},\mathbb{P})$ defined as \eqref{23-2-18-1254}   has measurable weak-horseshoes on two disjoint closed balls, then it has full-horseshoes.
\end{thm}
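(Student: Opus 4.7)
The plan is to run a Krylov–Bogolyubov argument on a carefully chosen compact product random dynamical system. First, compactify $\mathbb{R}^d$ as $\bar{\mathbb{R}}^d := \mathbb{R}^d \cup \{\infty\}$ and extend $\Phi^1_\omega$ to a continuous RDS $\tilde{\Phi}^1_\omega$ on $\bar{\mathbb{R}}^d$ by fixing $\infty$. Let $\Sigma := \{0,1,2\}^{\mathbb{Z}_+}$ with the left shift $\sigma$, and consider the compact space $\tilde{M} := \bar{\mathbb{R}}^d \times \Sigma$ carrying the continuous product RDS $G^1_\omega(x,s) = (\tilde{\Phi}^1_\omega(x), \sigma s)$. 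Define the ``compatibility'' set
$$K(\omega) := \{(x,s)\in \tilde{M}:\ s(i)\in\{1,2\}\Rightarrow \tilde{\Phi}^i_\omega(x)\in U_{s(i)},\ \forall i\geq 0\},$$
which is closed, contains $(\infty,\mathbf{0})$, is $G$-forward invariant, and is easily shown to be a random compact set via \Cref{22-11-1-01}.

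Second, I would encode the measurable weak-horseshoe data of \Cref{weakhoreshoes} into empirical measures on $\Omega\times\tilde{M}$. Using a measurable-selection version of (b), pick measurably $\hat{\gamma}_n(\omega)\subset[0,\mathbf{N}_n(\omega))$ and, for every $s\in\{1,2\}^{\hat{\gamma}_n(\omega)}$, a realizer $x_{n,s}(\omega)\in\mathbb{R}^d$; extend $s$ to $\tilde{s}\in\Sigma$ by zeros off $\hat{\gamma}_n(\omega)$. Set
$$\tilde{\nu}_n := \int_\Omega \frac{1}{\mathbf{N}_n(\omega)\cdot 2^{|\hat{\gamma}_n(\omega)|}} \sum_{i=0}^{\mathbf{N}_n(\omega)-1}\sum_{s\in\{1,2\}^{\hat{\gamma}_n(\omega)}} \delta_{(\theta^i\omega,\ G^i_\omega(x_{n,s}(\omega),\tilde{s}))}\,\mathrm{d}\mathbb{P}(\omega).$$
Since $(x_{n,s}(\omega),\tilde{s})\in K(\omega)$ and $K$ is $G$-forward invariant, each summand lies in the graph of $K$. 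An adaptation of \Cref{22-10-12-01} (the proof extends verbatim to mixtures of measurable selections) produces a subsequential narrow limit $\tilde{\nu}$ that is $G$-invariant, has $\Omega$-marginal $\mathbb{P}$, and satisfies $\tilde{\nu}(K)=1$ by \Cref{22-12-14-309} applied to the random compact set $K$.

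Third, disintegrate $\tilde{\nu}=\int \delta_\omega\otimes\nu_\omega\,\mathrm{d}\mathbb{P}$ and let $\rho_\omega$ be the $\Sigma$-marginal of $\nu_\omega$; the measure $\int \delta_\omega\otimes\rho_\omega\,\mathrm{d}\mathbb{P}$ is $(\theta^1\times\sigma)$-invariant. The Bernoulli(1/2) structure inside $\lambda_n(\omega):=2^{-|\hat{\gamma}_n(\omega)|}\sum_s\delta_{\tilde{s}}$ gives uniform positive lower bounds on finite cylinders: for any finite $F=\{j_1,\dots,j_r\}$ and $a\in\{1,2\}^F$,
$$\tilde{\nu}_n\big(\{s(j_l)=a_l,\ \forall l\}\big) = 2^{-r}\cdot\mathbb{E}\Bigl[\tfrac{1}{\mathbf{N}_n(\omega)}\big|\{0\leq i<\mathbf{N}_n(\omega):\ i+j_l\in\hat{\gamma}_n(\omega),\ \forall l\}\big|\Bigr].$$
Combined with $|\hat{\gamma}_n(\omega)|/\mathbf{N}_n(\omega)\geq\boldsymbol{b}$ and a Birkhoff-type argument in the limit, this produces, for $\mathbb{P}$-a.s.\ $\omega$, a random set $J(\omega)\subset\mathbb{Z}_+$ of positive lower density such that $\rho_\omega$ charges every cylinder specifying $\{1,2\}$-values on any finite subset of $J(\omega)$. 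For each $s^*\in\{1,2\}^{J(\omega)}$, the nested closed sets $\{(x,s)\in K(\omega):\ s|_{J(\omega)\cap[0,N]}=s^*|_{[0,N]}\}$ then all have positive $\nu_\omega$-mass, so compactness of $\tilde{M}$ yields an $(x_{s^*},\hat{s})\in K(\omega)$ with $\hat{s}|_{J(\omega)}=s^*$; non-emptiness of $J(\omega)$ forces $x_{s^*}\in\mathbb{R}^d$, giving $\Phi^j_\omega(x_{s^*})\in U_{s^*(j)}$ for every $j\in J(\omega)$.

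The hard part will be the joint cylinder step in the third paragraph: the finite-scale Bernoulli independence holds only on $\hat{\gamma}_n(\omega)$, and after time-averaging the joint positivity for multiple coordinates $j_1,\dots,j_r$ depends on the density of shifts $i$ with all $i+j_l\in\hat{\gamma}_n(\omega)$, which can degenerate if $\hat{\gamma}_n(\omega)$ has unfavorable gap structure. To get around this, I expect to choose $J(\omega)$ only after passing to the ergodic decomposition of the limiting $(\theta^1\times\sigma)$-invariant measure on $\Omega\times\Sigma$ and extracting $J(\omega)$ from positions where a generic trajectory's realized $\{1,2\}$-sequence is both typical and of positive density; the measurability of $J(\omega)$ then follows from the measurability of the disintegration together with the Borel measurable maps $\hat{\gamma}_n$ used to build $\tilde{\nu}_n$.
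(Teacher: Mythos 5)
The decisive gap is the one you yourself flag: recovering the quantifier ``for \emph{every} $s\in\{1,2\}^{J(\omega)}$'' from a limit measure whose points each certify only \emph{one} compatible pair $(x,s)$. In your encoding, membership in $K(\omega)$ carries a single pattern, so after taking the narrow limit and disintegrating you need a positive-density set $J(\omega)$ over which every finite $\{1,2\}$-cylinder has positive $\nu_\omega$-mass. Your proposed repair --- pass to an ergodic component and let $J(\omega)$ be the support of the realized $\{1,2\}$-sequence of a Birkhoff-generic point --- only produces the one pattern carried by that generic point (realized by its own $x$); genericity gives no lower bound on the mass of the remaining $2^{|F|}-1$ cylinders over a finite $F\subset J(\omega)$. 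As your own formula shows, those masses are governed by the correlation counts $|\{0\leq i<\mathbf{N}_n(\omega): i+j_l\in\hat{\gamma}_n(\omega)\ \forall l\}|/\mathbf{N}_n(\omega)$, which can degenerate for fixed gaps (e.g.\ if $\hat{\gamma}_n(\omega)$ sits in an arithmetic progression), and nothing in \Cref{weakhoreshoes} excludes this. So the ``for all $s$'' clause of \Cref{23-1-13-0019} is not proved. A way to close it within your scheme, which is absent from the proposal, is to note that each $\tilde{\nu}_n$ is invariant under every coordinate flip $1\leftrightarrow 2$ at a fixed position of $\Sigma$ (flips fix the symbol $0$ and permute the patterns you average over); these are fiber-independent homeomorphisms, so the limit inherits the invariance, forcing the values to be conditionally i.i.d.\ uniform given the support $\hat{s}$, and then for $\rho_\omega$-a.e.\ support $A$ every finite $F\subset A$ satisfies $\rho_\omega(\hat{s}\supset F)=2^{|F|}\rho_\omega(s|_F=a)>0$, which restores the joint-cylinder positivity.

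By contrast, the paper sidesteps this difficulty entirely through a different encoding: its random compact set lives in $\{0,1\}^{\mathbb{Z}_+}$ alone, and a point $u\in K(\omega)$ certifies \emph{by definition} that all patterns in $\{1,2\}^{\hat{u}}$ are simultaneously realizable (the nontrivial point being closedness of this set, proved in \Cref{22-11-14-02} via Lusin's theorem, compactness of the balls and the diffeomorphism property). Then the single selection $\gamma_n(\omega)$ from \Cref{weakhoreshoes} feeds directly into \Cref{22-10-12-01}, and only the one clopen cylinder $[1]$ plus the Birkhoff theorem is needed. Your route additionally requires a measurable choice of realizers $x_{n,s}(\omega)$, asserted without proof (fixable, since the realizer set $\bigcap_{j\in\hat{\gamma}_n(\omega)}(\Phi^j_\omega)^{-1}(U_{s(j)})$ is nonempty compact and measurably parametrized, so a measurable selection theorem applies), and an extension of \Cref{22-10-12-01} to mixtures over exponentially many selections, also asserted rather than proved. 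These are secondary, but together with the main gap they leave the argument incomplete as written.
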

 \begin{proof}
 According to the definition of measurable weak-horseshoes,  we know that there exists a pair of   disjoint closed balls  $\{U_1, U_2\}$   of $\mathbb{R}^d$, a constant $\boldsymbol{b}>0$, a $\mathbb{P}$ full-measure $\Omega^1\subset\Omega$,  a sequence $\{\mathbf{N}_n\}_{n\in\mathbb{N}}$ of strictly  increasing Borel measurable map $\mathbf{N}_n:\Omega\to\mathbb{Z}_+$,  and a sequence $\{\gamma_n\}_{n\in\mathbb{N}}$ of Borel measurable maps $\gamma_n: \Omega\to \{0,1\}^{\mathbb{Z}_+}$
such that one has that for any $n\in\mathbb{N}$ and any $\omega\in\Omega^1$, 
\begin{enumerate}[(a)]
\item $\hat{\gamma}_n(\omega)\subset\{0,1,\dots, \mathbf{N}_n(\omega)-1\}$ and  $|\hat{\gamma}_n(\omega)|\geq \boldsymbol{b}\mathbf{N}_n(\omega)$;
\item  for any $s\in\{1,2\}^{\hat{\gamma}_n(\omega)}$, there exists an $x_s\in \mathbb{R}^d$ with $ \Phi^j_{\omega}(x_s)\in U_{s(j)}$ for any $j\in \hat{\gamma}_n(\omega)$.
\end{enumerate}

Recall that  $\theta^1:\Omega\to\Omega$ is the Wiener shift  defined as $\theta^1(\omega)=\omega(\cdot+1)-\omega(1)$ and $(\Omega,\mathscr{F}_{\mathbb{P}},\mathbb{P},\theta^1)$  is an ergodic measure-preserving dynamical system,  where $\mathscr{F}_{\mathbb{P}}$ is the completion of $\mathscr{F}$ with respect to $\mathbb{P}$.  Define a continuous RDS $F$ on $\{0,1\}^{\mathbb{Z}_+}$ over $(\Omega,\mathscr{F}_{\mathbb{P}},\mathbb{P},\theta^1)$ by setting $F_{\omega}=\sigma$ as  the left-shift on
$\{0,1\}^{\mathbb{Z}_+}$, i.e.
$$F: \mathbb{Z}_+\times \Omega\times \{0,1\}^{\mathbb{Z}_+}\to \{0,1\}^{\mathbb{Z}_+},\quad (n,\omega,u)\mapsto \sigma^n(u).$$
For any $u\in\{0,1\}^{\mathbb{Z}_+}$ and  $\omega\in\Omega$,  denote
\begin{align*}
K(\omega):=\Big\{ u\in\{0,1\}^{\mathbb{Z}_+}:&\text{ for any } s\in \{1,2\}^{\hat{u}}, \text{ there exists } x_s\in\mathbb{R}^d
\\&\text{ such that } \Phi^j_{\omega}(x_s)\in U_{s(j)} \text{ for  each } j\in \hat{u}\Big\}.
\end{align*}
 Here, we need to point out that  we regard $u=\boldsymbol{0}=(0,0,\cdots)\in\{0,1\}^{\mathbb{Z}_+}$ as an element of $K(\omega)$ for any $\omega\in\Omega$. Denote $K:=\bigcup_{\omega\in\Omega}\{\omega\}\times K(\omega)$. In the next, we prove that the slight adjustment of $K$ is a $F$-forward invariant random compact set. Namely, 
\begin{lemma}
\label{22-11-14-02}
There exists  a $F$-forward invariant random compact set $\tilde{K}$ of $\{0,1\}^{\mathbb{Z}_+}$ on $(\Omega,\mathscr{F}_{\mathbb{P}})$ such that $\tilde{K}(\omega)=K(\omega)$ for $\mathbb{P}$-a.s. $\omega\in\Omega$.
\end{lemma}
\begin{proof}
Fix $(\omega,u)\in K$.  If $u=\boldsymbol{0}$, then it is clear that $\sigma(u)=\boldsymbol{0}\in K(\theta^1\omega)$. If $u\neq\boldsymbol{0}$, 
note that   $\{j+1: j\in \hat{\sigma(u)}\}\subset \hat{u}$. For any $s\in \{1, 2\}^{\hat{\sigma(u)}}$,  there exists  $\tilde{s}\in\{1,2\}^{\hat{u}}$ such that $\tilde{s}(j+1)=s(j)$ for each $j\in \hat{\sigma(u)}$.  Since $u\in K(\omega)$,  we can find $x_{\tilde{s}}\in \mathbb{R}^d$ such that 
  $$\Phi^{j+1}_{\omega}(x_{\tilde{s}})\in U_{\tilde{s}(j+1)}\quad \text{for each } j\in \hat{\sigma(u)}.$$ 
    Then, 
$$\Phi^j_{\theta\omega}\big(\Phi^1_{\omega}x_{\tilde{s}}\big)=\Phi^{j+1}_{\omega}( x_{\tilde{s}})\in U_{\tilde{s}(j+1)}=U_{s(j)}\quad\text{for each } j\in \hat{\sigma(u)}.$$
Therefore, $(\theta\omega,\sigma(u))\in K$ which implies that $K$ is $F$-forward invariant.    The remainder proof of this lemma is divided into two steps.

\textbf{Step 1}: Let $\Omega^2$ be a $\mathbb{P}$-full measure  subset of $\Omega^1$ such that for any $l, m\in\mathbb{Z}_+$,  the mapping $\omega\mapsto \Phi_{\theta^l\omega}^m$  from $\Omega^2$ to $\text{Diff}^{\infty}(\mathbb{R}^d)$  is a Borel measurable map. 
  By  Lusin's theorem (for example, see \cite[(17.12) Theorem]{MR1321597}), there exists a sequence of compact subsets $\{\Omega_n\}_{n\in\mathbb{N}}$ of $\Omega$ with $\mathbb{P}(\Omega_n)\geq 1-1/n$ such that $\Omega_n\subset\Omega^2$ and 
  \begin{align}
\label{22-10-05-01}
&\Omega_n\times \mathbb{R}^d\to \mathbb{R}^d,\quad (\omega,x)\mapsto \Phi_{\theta^l\omega}^m(x)
\end{align}
is continuous for any $l,m \in\mathbb{N}$.   In this step, we show  that for each $n\in\mathbb{N}$,
\begin{align*}
K_{n}=\bigcup_{\omega\in\Omega_n}\{\omega\}\times K(\omega)= K\cap (\Omega_n\times\{0,1\}^{\mathbb{Z}_+})
\end{align*} is a closed subset of $\Omega\times\{0,1\}^{\mathbb{Z}_+}$. Moreover,  for any  $\omega\in \Omega_n$, $K(\omega)$ is a compact  subset of $\{0,1\}^{\mathbb{Z}_+}$.

Given a sequence $\{(\omega_i ,u_i)\}_{i\in\mathbb{N}}$ of $K_{n}$ satisfying
$$(\omega, u):=\lim_{i\to+\infty}(\omega_i, u_i)\in\Omega\times\{0,1\}^{\mathbb{Z}_+},$$
we are going to show that $(\omega,u)\in K_n$.  Since $\omega=\lim_{i\to+\infty}\omega_i\in\Omega_n$, we only need to prove that $u\in K(\omega)$. If $\hat{u}=\varnothing$, then  $u= \mathbf{0}$ and $(\omega, \mathbf{0})\in K_n$.  Otherwise,  if $\hat{u}\neq\varnothing$,  denote $n_u=\min\{n\in\mathbb{Z}_+: n\in\hat{u}\}$.  Fixing $\check{s}\in\{1,2\}^{\hat{u}}$,  there exists  a strictly increasing sequence $\{i_k\}_{k\in\mathbb{N}}$ of $\mathbb{N}$ such that for any $k\in\mathbb{N}$,
$$\hat{u}_{i_k}\cap\{0,1,\dots, n_u+r\}=\hat{u}\cap\{0,1\dots, n_u+r\},$$
where $1\leq r\leq k$.   Now for each $k\in\mathbb{N}$,  we can choose a  $s^k\in\{1,2\}^{\hat{u}_{i_k}}$ such that 
\begin{align}
\label{22-12-14-222}
s^k(j)=\check{s}(j)\text{ for }j\in\hat{u}\cap\{0,1,\dots, n_u+k\}.
\end{align}
Since $(\omega_{i_k},u_{i_k})\in K_{n}$, there exists $x_{s^k}\in \mathbb{R}^d$ such that
$\Phi^j_{\omega_{i_k}}(x_{s^k})\in U_{s^k(j)}$  for each $j\in\hat{u}_{i_k}$.  By   compactness of  $U_{\check{s}(n_u)}$ ,  without loss of generality, we assume that
\begin{align}
\label{22-10-23-06}
\lim_{k\to+\infty} \Phi^{n_u}_{\omega_{i_k}}(x_{s^k})=x^{n_u}_s.
\end{align}
Since that  $\Phi^{n_u}_{\omega}$ is a diffeomorphism on $\mathbb{R}^d$,  there is an $x_s\in \mathbb{R}^d$ such that  $\Phi^{n_u}_{\omega}(x_s)=x_s^{n_u}$.  Therefore,    for each $j\in\hat{u}$,
 \begin{align*}
 \Phi^j_{\omega}(x_s)&=\Phi^{j-n_u}_{\theta^{n_u}\omega}(x_s^{n_u})
 \\&=\lim_{k\to+\infty}\Phi^{j-n_u}_{\theta^{n_u}\omega_{i_k}}\big(\Phi^{n_u}_{\omega_{i_k}}(x_{s^k})\big)
 \\&=\lim_{k\to+\infty}\Phi^{j}_{\omega_{i_k}}\big(x_{s^k}\big)\in U_{s(j)},
 \end{align*}
which implies that $(\omega,u)\in K_n$. Hence that, $K_n$ is a closed subsets of $\{0,1\}^{\mathbb{Z}_+}$.
 
\textbf{Step 2}: In this step,  we prove that a slight adjustment of $K$ is a measurable subset of $\Omega\times\{0,1\}^{\mathbb{Z}_+}$. This  provides  the existence of  $\tilde{K}$. 

 Denoting  $\tilde{\Omega}:=\bigcup_{n\in\mathbb{N}}\Omega_n$, then $\Omega^3:=\bigcap_{j\in\mathbb{Z}_+}(\theta^{j})^{-1}\tilde{\Omega}$ is a $\theta^1$-forward invariant $\mathbb{P}$-full measure subset of $\Omega^2$. Let
$$\tilde{K}=\big(K\cap(\Omega^3\times\{0,1\}^{\mathbb{Z}_+})\big)\cup\big( (\Omega\setminus\Omega^3)\times\{\mathbf{0}\}\big),$$
 Note that 
\begin{itemize}
\item  for any $n\in\mathbb{N}$ and $\omega\in\Omega$, $F^n_{\omega}(\tilde{K}(\omega))=\sigma^n(\tilde{K}(\omega))\subset\tilde{K}(\theta^n\omega)$. Hence that $\tilde{K}$ is   $F$-forward invariant;
\item  $K(\omega)$ is a compact subset of $\{0,1\}^{\mathbb{Z}_+}$ for any $\omega\in\Omega^3$; 
\item since $K_n$ is a closed subset of $\Omega\times\{0,1\}^{\mathbb{Z}_+}$ for any $n\in\mathbb{N}$, one has that 
\begin{align*}
K\cap (\Omega^3\times \{0,1\}^{\mathbb{Z}_+})&=K\cap (\tilde{\Omega}\times   \{0,1\}^{\mathbb{Z}_+})\cap(\Omega^3\times  \{0,1\}^{\mathbb{Z}_+})
\\&=K\cap\big(\bigcup_{n\in\mathbb{N}}(\Omega_n\times \{0,1\}^{\mathbb{Z}_+})\big)\cap(\Omega^3\times  \{0,1\}^{\mathbb{Z}_+})
\\&=\bigcup_{n\in\mathbb{N}}(K\cap(\Omega_n\times \{0,1\}^{\mathbb{Z}_+}))\cap(\Omega^3\times  \{0,1\}^{\mathbb{Z}_+})
\\&=\bigcup_{n\in\mathbb{N}} K_n\cap(\Omega^3\times  \{0,1\}^{\mathbb{Z}_+})\in\mathscr{F}_{\mathbb{P}}\otimes\mathscr{B}_{\{0,1\}^{\mathbb{Z}_+}}.
\end{align*}
 \end{itemize}
  This finishes the proof of \Cref{22-11-14-02} by using \Cref{22-11-1-01}.
\end{proof}

Let's proceed to prove  \Cref{23-10-10-2137}.  Define a sequence of probability measures  $\{\tilde{\nu}_n\}_{n\in\mathbb{N}}$ on $(\Omega\times\{0,1\}^{\mathbb{Z}_+},\mathscr{F}_{\mathbb{P}}\otimes\mathscr{B}_{\{0,1\}^{\mathbb{Z}_+}})$  as follows,
\begin{align}
\tilde{\nu}_n=\int_{\Omega}\frac{1}{\mathbf{N}_n(\omega)}\sum_{i=0}^{\mathbf{N}_n(\omega)-1}\delta_{\big(\theta^i\omega, \sigma^i\gamma_n(\omega)\big)}\mathrm{d}\mathbb{P}(\omega),
\end{align}
where $\{\gamma_n\}_{n\in\mathbb{N}}$ is a sequence of Borel measurable maps $\gamma_n: \Omega\to\{0,1\}^{\mathbb{Z}_+}$ and $\{\mathbf{N}_n\}_{n\in\mathbb{N}}$  is a sequence of Borel measurable maps  $\mathbf{N}_n: \Omega\to\mathbb{N}$, which  are given in the beginning of the proof of \Cref{23-10-10-2137}.   It is clear that for any $n\in\mathbb{N}$ and $\mathbb{P}$-a.s. $\omega\in\Omega$, one has that $\gamma_n(\omega)\in\tilde{K}(\omega)$. By \Cref{22-10-12-01},   there exists  a strictly increasing sequence $\{n_k\}_{k\in\mathbb{N}}$  of  $\mathbb{N}$ such that $\tilde{\nu}:=\lim_{k\to+\infty}\tilde{\nu}_{n_k}$ is an invariant measure of the RDS $F$  and $\tilde{\nu}(\tilde{K})=1$.   By  \Cref{22-12-14-309}, one has that
 \begin{align}
 \lim_{k\to+\infty}\tilde{\nu}_{n_k}([1])=\tilde{\nu}([1]),
 \end{align}
 where we used that $[1]:=\Omega\times\{u\in\{0,1\}^{\mathbb{Z}_+}: u(0)=1\}$ and 
 $$[0]:=\big(\Omega\times\{0,1\}^{\mathbb{Z}_+}\big)\setminus[1]=\Omega\times\{u\in\{0,1\}^{\mathbb{Z}_+}: u(0)=0\}$$ both are random compact set of $\{0,1\}^{\mathbb{Z}_+}$ on $(\Omega,\mathscr{F}_{\mathbb{P}})$. Therefore,
\begin{align*}
\tilde{\nu}([1])&=\lim_{k\to+\infty}\tilde{\nu}_{n_k}([1])
\\&=\lim_{k\to+\infty}\int_{\Omega}\frac{1}{\mathbf{N}_{n_k}(\omega)}\sum_{j=0}^{\mathbf{N}_{n_k}(\omega)-1}\delta_{\big(\theta^j\omega, \sigma^j\gamma_{n_k}(\omega)\big)}([1])\mathrm{d}\mathbb{P}(\omega)
\\&=\lim_{k\to+\infty}\int_{\Omega}\frac{|\{j\in\{0,1,\dots, \mathbf{N}_{n_k}(\omega)-1\}: \big(\sigma^j\gamma_{n_k}(\omega)\big)(0)=1\}|}{\mathbf{N}_{n_k}(\omega)}\mathrm{d}\mathbb{P}(\omega)
\\&=\lim_{k\to+\infty}\int_{\Omega}\frac{|\{j\in\{0,1,\dots, \mathbf{N}_{n_k}(\omega)-1\}: \big(\gamma_{n_k}(\omega)\big)(j)=1\}|}{\mathbf{N}_{n_k}(\omega)}\mathrm{d}\mathbb{P}(\omega)
\\&=\lim_{k\to+\infty}\int_{\Omega}\frac{|\hat{\gamma}_{n_k}(\omega)|}{\mathbf{N}_{n_k}(\omega)}\mathrm{d}\mathbb{P}(\omega)\geq \boldsymbol{b}.
\end{align*}
By the ergodic decomposition (for example, see \cite[Theorem 6.2]{EW}), $\pi_*\tilde{\nu}=\mathbb{P}$ and the fact that $(\Omega,\mathscr{F}_{\mathbb{P}},\mathbb{P},\theta)$ is ergodic, we know that there exists an invariant ergodic Borel probability measure $\nu$ of the RDS $F$  such that  $\nu([1])=\tilde{\nu}([1])\geq\boldsymbol{b}$ and $\nu(\tilde{K})=1$.

Let 
$$G_{\nu}:=\left\{(\omega,u)\in\Omega\times\{0,1\}^{\mathbb{Z}_+}: \lim_{N\to+\infty}\frac{1}{N}\sum_{j=0}^{N-1}\delta_{(\theta^j\omega, \sigma^ju)}([1])=\nu([1])\right\}.$$  By Birkhoff ergodic theorem, one has that $\nu(G_v)=1$.   Then there exists a  $\mathbb{P}$-full measure subset $\Omega^4$ of $\Omega^3$  such that for any $\omega\in\Omega^4$
$$\pi_{\Omega}^{-1}(\omega)\cap G_v\cap K=\pi_{\Omega}^{-1}(\omega)\cap G_v\cap\tilde{K}\neq\varnothing,$$ where $\pi_{\Omega}$ is the projection form $\Omega\times \{0,1\}^{\mathbb{Z}_+}$ to $\Omega$.
For any $\omega\in\Omega^4$, there exists $u_{\omega}\in \{0,1\}^{\mathbb{Z}_+}$ such that $(\omega, u_{\omega})\in G_{\nu}\cap K$.
Letting $J(\omega):=\hat{u}(\omega)=\{n\in\mathbb{Z}_+: u_{\omega}(n)=1\}$,  then one has that
\begin{align*}
\nu([1])&=\lim_{N\to+\infty}\frac{1}{N}\sum_{j=0}^{N-1}\delta_{(\theta^j\omega,\sigma^ju_{\omega})}([1])
\\&=\lim_{N\to+\infty}\frac{1}{N}|\{j\in\{0,1,\dots, N-1\}: (\sigma^ju_{\omega})(0)=1\}|
\\&=\lim_{N\to+\infty}\frac{1}{N}|\{j\in\{0,1,\dots, N-1\}: u_{\omega}(j)=1\}|
\\&=\lim_{N\to+\infty}\frac{1}{N}|J(\omega)\cap\{0,1,\dots, N-1\}|\geq\boldsymbol{b}.
\end{align*}
By the definition of $K$,  for any $s\in\{1,2\}^{J(\omega)}$, there exists an $x_s\in \mathbb{R}^d$ with $ \Phi^j_{\omega}(x_s)\in U_{s(j)}$ for any $j\in J(\omega)$. For all, we complete the proof of \Cref{23-1-13-0019}.
\end{proof}
 Therefore, Theorem \ref{23-1-13-0019} follows from \Cref{weakhoreshoes} and \Cref{23-10-10-2137}. 
Through  all proofs,  it can  be seen that,
\begin{proposition}
\label{23-3-13-2003}
For any a stochastic flow $\Phi$ of $C^2$ diffeomorphisms  on $\mathbb{R}^d$ over the Wiener space $(\Omega,\mathscr{F},\mathbb{P})$  which is defined as \eqref{23-2-18-1254},  if it satisfies following hypothesis, 
\begin{itemize}
\item[(H1)] $\Phi$ admits a smooth  stationary measure which is ergodic with respect to the its  time-1 transition probabilities and  has positive top Lyapunov exponents with respect to  this stationary measure;
\item[(H2)] \textbf{Assumption 1-Assumption 3} in \Cref{23-2-16-1653}  holds,
\end{itemize}
then the stochastic flow $\Phi$ has full-horseshoes.
\end{proposition}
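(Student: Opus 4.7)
The plan is to replay the three-step architecture already used to prove \Cref{23-1-13-0019}, replacing each ingredient specific to GSNS with its abstract counterpart coming from hypotheses (H1)-(H2). Concretely: (Step A) positive top Lyapunov exponent plus Pesin's formula give positive fiber entropy; (Step B) positive fiber entropy plus the skew-product framework give measurable weak-horseshoes; (Step C) measurable weak-horseshoes plus the $C^2$ regularity of the flow give full-horseshoes.

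First, I would apply Pesin's entropy formula \Cref{23-2-16-1653} directly: hypothesis (H2) is precisely its three integrability assumptions, while (H1) supplies the smooth stationary measure $\varrho$ which is ergodic for the time-$1$ Markov transition together with $\lambda_1>0$. The output is
$$h_\varrho(\Phi)\;=\;\sum_{i=1}^d \lambda_i^+\;\geq\;\lambda_1\;>\;0,$$
exactly paralleling the conclusion of \Cref{23-2-2-2148}, but now without any appeal to the particular GSNS nonlinearity.

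Second, I would rerun the construction of \Cref{weakhoreshoes} verbatim. An inspection of that proof shows the only GSNS-specific inputs are: (i) $\mathbb{P}\times\varrho$ is an invariant ergodic measure of the skew-product $T^{(1)}$ with $(\pi_{\Omega})_*(\mathbb{P}\times\varrho)=\mathbb{P}$, which is furnished here by (H1) together with \Cref{23-10-10-1528}; (ii) the fiber entropy satisfies $h_\mu(T|\bar{\Omega})=h_\varrho(\Phi)>0$, which comes from Step A via \Cref{23-2-24-2006}; and (iii) the invertible extension $(\bar{\Omega},\bar{\mathscr{F}},\bar{\mathbb{P}},\bar{\theta})$ of $(\Omega,\mathscr{F},\mathbb{P},\theta^1)$, which is classical for any ergodic measure-preserving system on a Polish probability space. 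With these three inputs in place, the disintegration $\mu=\int_Y\mu_y\,d\nu(y)$, the relative Pinsker factor $\pi_1$, the entropy lower bound $\delta(y)\geq 3\boldsymbol{c}_0$, the return-time functions $\boldsymbol{b}_n$, the combinatorial selection via \Cref{22-10-10-01}, and the measurable-selection lemma all go through without change, producing a pair of disjoint closed balls $\{U_1,U_2\}$ together with the sequences $\{\mathbf{N}_n\}, \{\gamma_n\}$ defining measurable weak-horseshoes. Finally, I would invoke \Cref{23-10-10-2137} to upgrade these weak-horseshoes to full-horseshoes; the proof of that theorem uses only that each $\Phi^m_\omega$ is a diffeomorphism and that $(\omega,x)\mapsto\Phi^m_{\theta^l\omega}(x)$ is jointly continuous on the Lusin compacta $\Omega_n\times\mathbb{R}^d$, both of which are guaranteed by the standing hypothesis that $\Phi$ is a $C^2$ stochastic flow in the sense of \eqref{23-2-18-1254}.

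The main obstacle is essentially bookkeeping: I must verify that no step in the proofs of \Cref{23-2-2-2148}, \Cref{weakhoreshoes}, or \Cref{23-10-10-2137} silently exploits a GSNS-specific feature beyond what (H1)-(H2) provide. The only delicate checkpoint is the construction of the invertible extension $\bar{\theta}$ and the relative Pinsker factor $\pi_1:(X,\mathscr{X},\mu,T)\to(Y,\mathscr{Y},\nu,S)$ used in \Cref{key-lem}; once one notes that both constructions are purely measure-theoretic and require only an ergodic factor map between invertible measure-preserving systems on Polish probability spaces, the entire machinery transports. The chain "positive top Lyapunov exponent $\Rightarrow$ positive fiber entropy (Pesin) $\Rightarrow$ measurable weak-horseshoes $\Rightarrow$ full-horseshoes" then depends only on the abstract ingredients packaged in (H1) and (H2), yielding \Cref{23-3-13-2003}.
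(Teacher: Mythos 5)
Your proposal is correct and coincides with the paper's own justification of \Cref{23-3-13-2003}, which is simply the observation that the chain \Cref{23-2-16-1653} $\Rightarrow$ \Cref{weakhoreshoes} $\Rightarrow$ \Cref{23-10-10-2137} uses nothing about GSNS beyond the smooth ergodic stationary measure with $\lambda_1>0$ and \textbf{Assumption 1}--\textbf{Assumption 3}, i.e.\ exactly (H1)--(H2). Your step-by-step audit of which GSNS-specific inputs get replaced by the hypotheses is the same bookkeeping the paper leaves implicit.
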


\providecommand{\bysame}{\leavevmode\hbox to3em{\hrulefill}\thinspace}
\providecommand{\MR}{\relax\ifhmode\unskip\space\fi MR }
\providecommand{\MRhref}[2]{%
  \href{http://www.ams.org/mathscinet-getitem?mr=#1}{#2}
}
\providecommand{\href}[2]{#2}


\begin{thebibliography}{10}

\bibitem{MR710891}
N. Alon, \emph{On the density of sets of vectors}, Discrete Math. \textbf{46}
  (1983), no.~2, 199--202.

\bibitem{A}
L. Arnold, \emph{Random dynamical systems}, Springer Monographs in
  Mathematics, Springer-Verlag, Berlin, 1998.
  
  
 



  
  \bibitem{MR4372219}
J. Bedrossian, A. Blumenthal, and S. Punshon-Smith, \emph{A regularity
  method for lower bounds on the {L}yapunov exponent for stochastic
  differential equations}, Invent. Math. \textbf{227} (2022), no.~2, 429--516.



\bibitem{MR4408018}
J. Bedrossian and K. Liss, \emph{Quantitative spectral gaps for
  hypoelliptic stochastic differential equations with small noise}, Probab.
  Math. Phys. \textbf{2} (2021), no.~3, 477--532.

\bibitem{bedrossian2021chaos}
J. Bedrossian and S. Punshon-Smith, \emph{Chaos in stochastic 2d
  {G}alerkin-{N}avier-{S}tokes},  arXiv:2106.13748 (2021).




\bibitem{MR3175262}
M. Biskamp, \emph{Pesin's formula for random dynamical systems on
  {$\bold{R}^d$}}, J. Dynam. Differential Equations \textbf{26} (2014), no.~1,
  109--142.



\bibitem{Bo11}
T. Bogensch\"{u}tz, \emph{Equilibrium states for random dynamical systems},  PhD Thesis, Institut f\"{u}r Dynamische Systeme, Universit\"{a}t Bremen, 1993.





\bibitem{C}
H. Crauel, \emph{Random probability measures on {P}olish spaces}, Stochastics
  Monographs, vol.~11, Taylor \& Francis, London, 2002.

\bibitem{MR1417491}
G.~Da~Prato and J.~Zabczyk, \emph{Ergodicity for infinite-dimensional systems},
  London Mathematical Society Lecture Note Series, vol. 229, Cambridge
  University Press, Cambridge, 1996.




\bibitem{MR1846802}
W. E and J. Mattingly, \emph{Ergodicity for the {N}avier-{S}tokes
  equation with degenerate random forcing: finite-dimensional approximation},
  Comm. Pure Appl. Math. \textbf{54} (2001), no.~11, 1386--1402.


\bibitem{EW}
M. Einsiedler and T. Ward, \emph{Ergodic theory with a view towards
  number theory}, Graduate Texts in Mathematics, vol. 259, Springer-Verlag
  London, Ltd., London, 2011.



\bibitem{MR0603625} 
H. Furstenberg,  \emph{Recurrence in ergodic theory and combinatorial number theory},  M. B. Porter Lectures, Princeton University Press, Princeton, N.J., 1981.

\bibitem{G}
E. Glasner, \emph{Ergodic theory via joinings}, Mathematical Surveys and
  Monographs, vol. 101, American Mathematical Society, Providence, RI, 2003.

\bibitem{MR1786718}
E. Glasner,  J. P. Thouvenot, and B. Weiss, \emph{Entropy theory without a past},
Ergodic Theory Dynam. Systems \textbf{20} (2000), no. 5, 1355–1370. 
  
  \bibitem{MR3862850}
N. Glatt-Holtz, D. Herzog, and J. Mattingly,
 \emph{Scaling and saturation in infinite-dimensional control problems with
  applications to stochastic partial differential equations}, Ann. PDE
  \textbf{4} (2018), no.~2, Paper No. 16, 103. 

  




\bibitem{MR2259251}
M. Hairer and J. Mattingly,  \emph{Ergodicity of the 2D Navier-Stokes equations with degenerate stochastic forcing},  Ann. of Math. (2) 164 (2006), no. 3, 993--1032. 


\bibitem{MR3382587}
D. Herzog and J. Mattingly, \emph{A practical criterion for
 positivity of transition densities}, Nonlinearity \textbf{28} (2015), no.~8,
  2823--2845. 



\bibitem{HL}
W. Huang and K. Lu, \emph{Entropy, chaos, and weak horseshoe for
  infinite-dimensional random dynamical systems}, Comm. Pure Appl. Math.
  \textbf{70} (2017), no.~10, 1987--2036.

\bibitem{MR234267}
W. Huang,  X. Ye,  and G. Zhang,  \emph{Relative entropy tuples, relative U.P.E. and C.P.E. extensions},  Israel J. Math. \textbf{158} (2007), 249–283.




\bibitem{MR1321597}
A.~Kechris, \emph{Classical descriptive set theory}, Graduate Texts
  in Mathematics, vol. 156, Springer-Verlag, New York, 1995.





\bibitem{MR884892}
Yuri Kifer, Ergodic theory of random transformations, Progress in Probability and Statistics, vol. 10, Birkhäuser Boston, Inc., Boston, MA, 1986








\bibitem{MR968818}
F.~Ledrappier and L.-S. Young, \emph{Entropy formula for random
  transformations}, Probab. Theory Related Fields \textbf{80} (1988), no.~2,
  217--240.

\bibitem{MR693976}
F.  Ledrappier and J.-M. Strelcyn, \emph{A proof of the
  estimation from below in {P}esin's entropy formula}, Ergodic Theory Dynam.
  Systems \textbf{2} (1982), no.~2, 203--219 (1983).



\bibitem{MR1369243}
P.-D. Liu and M. Qian, \emph{Smooth ergodic theory of random dynamical
  systems}, Lecture Notes in Mathematics, vol. 1606, Springer-Verlag, Berlin,
  1995.






\bibitem{majda2006nonlinear}
A. Majda and X. Wang, \emph{Nonlinear dynamics and statistical
  theories for basic geophysical flows}, Cambridge University Press, 2006.





\bibitem{oguntuase2001inequality}
J. Oguntuase, \emph{On an inequality of {G}ronwall}, J.  Inequal.  Pure  Appl. Math. \textbf{2} (2001), no.~1, Article 9, 6 pp.



\bibitem{MR0466791}
Ja. Pesin, \emph{Characteristic {L}yapunov exponents, and smooth ergodic
  theory}, Uspehi Mat. Nauk  \textbf{32} (1977), no.~4, 55--112.


\bibitem{pope2000turbulent}
S. Pope, \emph{Turbulent flows}, Cambridge University Press, 2000.

\bibitem{MR0143873}
V. Rohlin, \emph{Exact endomorphisms of a {L}ebesgue space}, Izv. Akad. Nauk
 SSSR Ser. Mat. \textbf{25} (1961), 499--530.

\bibitem{MR0217258}
V. Rohlin, \emph{Lectures on the entropy theory of transformations with invariant
  measure}, Uspehi Mat. Nauk \textbf{22} (1967), no.~5 (137), 3--56.


\bibitem{salmon1998lectures}
R. Salmon, \emph{Lectures on geophysical fluid dynamics}, Oxford University
  Press, New York, 1998.




\bibitem{MR0182020}
S. Smale, \emph{Diffeomorphisms with many periodic points}, Differential
  and {C}ombinatorial {T}opology ({A} {S}ymposium in {H}onor of {M}arston
  {M}orse), Princeton Univ. Press, Princeton, N.J., 1965, pp.~63--80.


\bibitem{vallis2017atmospheric}
G. Vallis, \emph{Atmospheric and oceanic fluid dynamics. Fundamentals and large-scale circulation}, Cambridge
  University Press, 2017.

\bibitem{Wal}
P. Walters, \emph{An introduction to ergodic theory}, Graduate Texts in
  Mathematics, vol.~79, Springer-Verlag, New York-Berlin, 1982.



\end{thebibliography}
\end{document}